\newtheorem{thm}{Theorem}[section]
\newtheorem{lem}[thm]{Lemma}
\theoremstyle{definition}
\newtheorem{defn}[thm]{Definition}
\theoremstyle{remark}
\newtheorem{rem}[thm]{Remark}
\numberwithin{equation}{section}
\begin{document}

\nocite{*}

\title{Existence, Uniqueness and Anisotropic-Decay-Caused Lifshitz Tails\\ of the Integrated Density of Surface States for \\Random Surface Models}

\author{Zhongwei Shen\footnote{Email: zzs0004@auburn.edu}\\Department of Mathematics and Statistics\\Auburn University\\Auburn, AL 36849\\USA}

\date{}

\maketitle

\begin{abstract}
    The current paper is devoted to the study of existence, uniqueness and Lifshitz tails of the integrated density of surface states (IDSS) for Schr\"{o}dinger operators with alloy type random surface potentials. We prove the existence and uniqueness of the IDSS for negative energies, which is defined as the thermodynamic limit of the normalized eigenvalue counting functions of localized operators on strips with sections being special cuboids. Under the additional assumption that the single-site impurity potential decays anisotropically, we also prove that the IDSS for negative energies exhibits Lifshitz tails near the bottom of the almost sure spectrum in the following three regimes: the quantum regime, the quantum-classical/classical-quantum regime and the classical regime. We point out that the quantum-classical/classical-quantum regime is new for random surface models.\\
    Keywords: random surface model, integrated density of surface states, Lifshitz tail, anisotropic decay.\\
    2010 Mathematics Subject Classification: 35P20, 46N50, 47B80.
\end{abstract}

\tableofcontents

\section{Introduction}

This paper is concerned with the existence, uniqueness and Lifshitz tails (or Lifshitz singularities, or Lifshitz behavior) of the integrated density of surface states (IDSS) for the following random Schr\"{o}dinger operator
\begin{equation}\label{random-surface-model}
H_{\omega}=-\Delta+V_{0}+V_{\omega}\quad\text{on}\quad\mathbb{R}^{d+n},
\end{equation}
where $-\Delta$ is the negative Laplacian, $V_{0}$ is the bulk potential used to model a perfect crystal and $V_{\omega}$ is the random surface potential of alloy type concentrated near the $d$-dimensional surface $\mathbb{R}^{d}\times\{0\}\subset\mathbb{R}^{d}\times\mathbb{R}^{n}$, that is, $V_{\omega}$ has the form
\begin{equation}\label{random-surface-potential}
V_{\omega}(x,y)=\sum_{i\in\mathbb{Z}^{d}}\omega_{i}f(x-i,y),\quad x\in\mathbb{R}^{d}\,\,\text{and}\,\,y\in\mathbb{R}^{n}
\end{equation}
where $\{\omega_{i}\}_{i\in\mathbb{Z}^{d}}$ are independent and identically distributed (i.i.d) random variables on some probability space $\Omega$ and $f$ is the single-site impurity potential. See Section \ref{assumptions-and-results} for exact assumptions.

Operator \eqref{random-surface-model} is used to model non-interacting electrons in a crystal with additional random impurities. A vast amount of literature has been carried out toward the spectral structure on the random Schr\"{o}dinger operator \eqref{random-surface-model} as well as its discrete analog. See \cite{BS03,BKS05,BSS05,CS00,HK00,JL00,JL01,JM99,JMP98} and references therein. However, there is very few work on Lifshitz tails for the random Schr\"{o}dinger operator \eqref{random-surface-model} and its discrete version. See \cite{KK06} for the discrete model and \cite{KW06} for the continuum model. As a motivation for the current paper, we roughly describe results obtained in \cite{KW06} by Kirsch and Warzel. In \cite{KW06}, they studied the existence, uniquness and Lifshitz tails of the IDSS for the model \eqref{random-surface-model} (with one more ergodic term). For the existence and uniqueness of the IDSS, they proved the existence of the limit $N^{X}(E):=\lim_{L\rightarrow\infty}\frac{1}{|\Lambda_{L}|}N(H_{\omega,S_{L}}^{X},E)$ for $E<0$ (see Section \ref{assumptions-and-results} for the definition of $N(H_{\omega,S_{L}}^{X},E)$) and the uniqueness $N^{D}(E)=N^{N}(E)$ for $E<0$, where $\Lambda_{L}$ is the open cube in $\mathbb{R}^{d}$ centered at $0$ with side length $L$ and $S_{L}=\Lambda_{L}\times\mathbb{R}^{n}$. The IDSS $N(E)$ for negative energies $E<0$ is defined to be the common values. We remark that the fact $\Lambda_{L}$ is a cube plays an important role, since their proof relies heavily on the eigenvalues of the Neumann Laplacian on cubes. For Lifshitz tails, under the assumption that the single-site impurity potential $f:\mathbb{R}^{d+n}\rightarrow[0,\infty)$ decays isotropically in the $x$-direction and is uniformly bounded in the $y$-direction, that is, $f$ satisfies $f_{u}|x|^{-\alpha}\chi_{G}(y)\leq f(x,y)\leq f_{0}|x|^{-\alpha}$ for $|x|$ large and any $y\in\mathbb{R}^{n}$, they showed that the IDSS for negative energies exhibits Lifshitz tails near the bottom of the spectrum with
\begin{equation*}
\lim_{E\downarrow E_{0}}\frac{\ln|\ln
N(E)|}{\ln(E-E_{0})}=-\max\bigg\{\frac{d}{2},\frac{d}{\alpha-d}\bigg\}.
\end{equation*}
Cases $\alpha\geq d+2$ and $d<\alpha<d+2$ correspond to the quantum regime and the classical regime, respectively. We remark that Lifshitz tails in the classical regime are also called Pastur tails.

Another motivation for this paper is the work of Kirsch and Warzel \cite{KW05}. In \cite{KW05}, they studied the Lifshitz tails for a class of general random operators, which cover the operator \eqref{random-surface-model} in the case $n=0$. The main assumption on the single-site impurity potential $f:\mathbb{R}^{d}\rightarrow[0,\infty)$ is the anisotropic decay, that is, $f(x_{1},x_{2})\sim(|x_{1}|^{\alpha_{1}}+|x_{2}|^{\alpha_{2}})^{-1}$ as $|(x_{1},x_{2})|\rightarrow\infty$, where $(x_{1},x_{2})\in\mathbb{R}^{d_{1}}\times\mathbb{R}^{d_{2}}$ with $d_{1}+d_{2}=d$. They proved that the integrated density of states (IDS) exhibits Lifshitz tails near the bottom of the spectrum with
\begin{equation*}
\lim_{E\downarrow E_{0}}\frac{\ln|\ln
N(E)|}{\ln(E-E_{0})}=-\max\bigg\{\frac{d_{1}}{2},\frac{\gamma_{1}}{1-\gamma}\bigg\}-\max\bigg\{\frac{d_{2}}{2},\frac{\gamma_{2}}{1-\gamma}\bigg\},
\end{equation*}
where $\gamma_{k}=\frac{d_{k}}{\alpha_{k}}$, $k=1,2$ and $\gamma=\gamma_{1}+\gamma_{2}$. Cases $\frac{d_{k}}{2}>\frac{\gamma_{k}}{1-\gamma}$, $k=1,2$ and
$\frac{d_{k}}{2}\leq\frac{\gamma_{k}}{1-\gamma}$, $k=1,2$ correspond to the quantum regime and the classical regime, respectively. The other two cases: (i) $\frac{d_{1}}{2}>\frac{\gamma_{1}}{1-\gamma}$ and $\frac{d_{2}}{2}\leq\frac{\gamma_{2}}{1-\gamma}$; (ii) $\frac{d_{1}}{2}\leq\frac{\gamma_{1}}{1-\gamma}$ and $\frac{d_{2}}{2}>\frac{\gamma_{2}}{1-\gamma}$, correspond to the quantum-classical/classical-quantum regime (since the quantum-classical regime and the classical-quantum regime are essentially symmetric, we here use ``the quantum-classical/classical-quantum regime" to denote both of them), which is unknown before them. In conclusion, they recovered the classical results (the quantum regime and the classical regime) with $f$ being isotropic decay and found a new regime (the quantum-classical/classical-quantum regime).

Inspired by the work of Kirsch and Warzel \cite{KW05,KW06}, we study the existence, uniqueness and Lifshitz tails caused by anisotropic decay of the IDSS for the random Schr\"{o}dinger operator \eqref{random-surface-model}. Main results of the paper can be roughly summarized as follows.
\begin{itemize}
    \item[\rm(i)] We prove the existence and uniqueness of the IDSS for negative energies, which is defined as the thermodynamic limit of the eigenvalue counting functions of localized operators on strips of the form $\Lambda\times\mathbb{R}^{n}$, where $\Lambda\subset\mathbb{R}^{d}$ are special open cuboids. Moreover, we justify that the IDSS for negative energies obtained in the current paper coincides with the one obtained in \cite{KW06} by Kirsch and Warzel. It is worthwhile to point out that the uniqueness of the IDSS is unknown for $\Lambda$ being general domains for the reason that the proof in \cite{KW06} or in Subsection \ref{subsec-proof} depends heavily on the eigenvalues of the Neumann Laplacian on $L^{2}(\Lambda)$.

    \item[\rm(ii)] Under the anisotropic decay of the single-site impurity potential $f:\mathbb{R}^{d+n}\rightarrow[0,\infty)$, that is, $f$ satisfies $f_{u}(|x_{1}|^{\alpha_{1}}+|x_{2}|^{\alpha_{2}})^{-1}\chi_{G}(y)\leq f(x,y)\leq f_{0}(|x_{1}|^{\alpha_{1}}+|x_{2}|^{\alpha_{2}})^{-1}$ for $|(x_{1},x_{2})|$ large and any $y\in\mathbb{R}^{n}$, we prove that the IDSS for negative energies exhibits Lifshitz tails near the bottom of the spectrum in all three regimes: the quantum regime, the quantum-classical/classical-quantum regime and the classical regime. Hence, we recover the results obtained in \cite{KW06} by Kirsch and Warzel  with $f$ decaying isotropically and find the quantum-classical/classical-quantum regime, which is new for random surface models.
\end{itemize}

We remark that for Lifshitz tails in both the quantum regime and the classical regime, we only need to study the IDSS by means of localized operators on strips $\Lambda\times\mathbb{R}^{n}$ with $\Lambda$ being open cubes in $\mathbb{R}^{d}$. It is the Lifshitz tails in the quantum-classical/classical-quantum regime forcing us to study the IDSS using localized operators on strips $\Lambda\times\mathbb{R}^{n}$ with $\Lambda$ being open cuboids in $\mathbb{R}^{d}$. Besides the above two main results $\rm(i)$ and $\rm(ii)$, we also prove the estimate of the spectral gap between
the lowest two eigenvalues of the localized partially periodic operator on strips as it was proven in \cite{KW06}, which plays a crucial role in the study of Lifshitz tails.

It should be pointed out that besides the study of Lifshitz tails for random surface models, Lifshitz tails for other random operators have been widely studied and proven to exist near the bottom of the spectrum since the first proof, given by Donsker and Varadhan \cite{DV75}, of Lifshitz's prediction \cite{Li63, Li65}. See \cite{KW05,KM83,KS86,Me87,PF92}, etc. for random alloy-type models, \cite{BHKL95,Er98,Er01,HKW03,HLW99,KLW,Kl10,KR06,LW04,Wa01}, etc. for random Landau Hamiltonians and \cite{KM06,MS07}, etc. for percolation models. Lifshitz tails are also shown to exist near band
edges of the spectrum as predicted by Lifshitz. This phenomenon is now referred to as internal Lifshitz tails. See \cite{Gh07,Kl99,Kl01,Kl02,KW02,Me86,Me93,Si87} and references therein. The survey paper \cite{KM07} provides a quite complete summary of above results. There are also results on Lifshitz tails for random magnetic fields. See \cite{Gh07,Na00-1,Na00-2,Ue00,Ue02} and references therein. Other types of random operators such as random wave operators, random block operators, hierarchical Anderson model, etc. were also shown to exhibit Lifshitz tails. See
\cite{KMM11,KV10,MK12,Na03,Na07} and references therein. Recently, Lifshitz tails were shown to exist in non-monotonous alloy type
random Schr\"{o}dinger operators. See \cite{Gh08,KN09,KN10} and references therein.

The rest of the paper is organized as follows. In section \ref{assumptions-and-results}, we give standard assumptions on the random surface model \eqref{random-surface-model} and state main results of this paper. In Section \ref{IDSS}, we prove the existence and uniqueness of the IDSS for negative energies. Section \ref{sec-spectral-gap} is devoted to the preparation for the proof of Lifshitz tails. In which, we prove the crucial spectral gap estimates and obtain a sandwiching bound for the IDSS for negative energies. In Section \ref{proof-of-Lifshitz-tails}, we prove the existence of Lifshitz tails near the bottom of the spectrum for the random surface model.

\section{Notations, Assumptions and Main Results}\label{assumptions-and-results}

In this section, we give basic assumptions on the random surface model \eqref{random-surface-model}, i.e., assumptions on the bulk potential $V_{0}$ and the random surface potential $V_{\omega}$, and state main results regarding the existence, uniqueness and Lifshitz tails of the IDSS.

We first make some conventions for the discrete spectrum of a self-adjoint operator. For any self-adjoint operator $H$, its spectrum is denoted by $\sigma(H)$. If $H$ has discrete spectrum below its essential spectrum, the discrete spectrum below the essential spectrum are denoted by $E_{0}(H)\leq E_{1}(H)\leq E_{2}(H)\leq\cdots$ according to multiplicity. Moreover, if the discrete spectrum below the essential spectrum consists of the points $E_{0}(H),E_{1}(H),\dots,E_{n-1}(H)$, we denote by $E_{n}(H)$ the bottom of the essential spectrum.

For self-adjoint operators restricted to subdomains with self-adjoint boundary conditions, we will frequently use the following notations. Suppose $H$ is a self-adjoint operator on $L^{2}(\mathbb{R}^{d+n})$. Let $\Lambda\subset\mathbb{R}^{d}$ be an open set (in particular, $\Lambda$ is a cuboid in $\mathbb{R}^{d}$) and let $S=\Lambda\times\mathbb{R}^{n}$ be the strip. We denote by $H_{S}^{X}$ the operator $H$ restricted to $L^{2}(S)$ with $X$
boundary condition on $\partial S$, where $X=D$ or $X=N$ refers to Dirichlet or Neumann boundary condition. Also, if we let $\widetilde{\Lambda}\subset\mathbb{R}^{n}$ be an open set and set $\widetilde{S}=\Lambda\times\widetilde{\Lambda}$, then the notation $H^{X,Y}_{\widetilde{S}}$ is used to stand for the operator $H$ restricted to $L^{2}(\widetilde{S})$ with $X$ boundary condition on $\partial\Lambda\times\widetilde{\Lambda}$ and $Y$ boundary condition on $\Lambda\times\partial\widetilde{\Lambda}$, where $X,Y=D$ or $N$ refer to either Dirichlet or Neumann boundary condition.

We next make assumptions to ensure the self-adjointness of operators $H_{\omega}$, $\omega\in\Omega$. Suppose

\begin{itemize}
    \item[\rm(H1)] $V_{0}\in\mathcal{K}(\mathbb{R}^{d+n})\cap L^{2}_{\rm loc}(\mathbb{R}^{d+n})$ is real-valued and $\mathbb{Z}^{d}$-periodic, that is,
        \begin{equation*}
        V_{0}(x+i,y)=V_{0}(x,y)\quad\text{for all}\,\,x\in\mathbb{R}^{d},y\in\mathbb{R}^{n}\,\,\text{and}\,\,i\in\mathbb{Z}^{d},
        \end{equation*}
        where $\mathcal{K}(\mathbb{R}^{d+n})$ is the Kato class (see \cite{Si82}) and $L^{2}_{\rm loc}(\mathbb{R}^{d+n})$ is the space of locally square integrable complex-valued functions on $\mathbb{R}^{d+n}$. The above assumption guarantees that $-\Delta+V_{0}$ is self-adjoint and is called the bulk operator. By shifting the energy, we assume without loss of generality that $\inf\sigma(-\Delta+V_{0})=0$.
\end{itemize}

and

\begin{itemize}
    \item[\rm(H2)] $V_{\omega}$ is the random alloy-type surface potential having the form
        \begin{equation*}
            V_{\omega}(x,y)=\sum_{i\in\mathbb{Z}^{d}}\omega_{i}f(x-i,y),\quad x\in\mathbb{R}^{d}\,\,\text{and}\,\,y\in\mathbb{R}^{n}
        \end{equation*}
where
\begin{itemize}
    \item[(i)] $\{\omega_{i}\}_{i\in\mathbb{Z}^{d}}$ are independent and identically distributed (i.i.d) random variables on some probability space $(\Omega,\mathcal{B},\mathbb{P})$ with common distribution $\mathbb{P}_{0}$. We assume that the support of $\mathbb{P}_{0}$, denoted by $\text{supp}\mathbb{P}_{0}$, is compact, contains at least two points and is contained in $(-\infty,0)$. By the canonical realization of stochastic processes, we may take $\Omega=(\text{supp}\mathbb{P}_{0})^{\mathbb{Z}^{d}}$, and thus, $\mathbb{P}$ is the product measure $\otimes_{i\in\mathbb{Z}^{d}}\mathbb{P}_{0}$. We denote by $\mathbb{E}$ the expectation corresponding to $\mathbb{P}$.

    \item[(ii)] The single-site impurity potential $f:\mathbb{R}^{d+n}\rightarrow[0,\infty)$ is positive on a nonempty open set in $\mathbb{R}^{d+n}$. More precisely, there exist a constant $f_{u}>0$ and two Borel sets $F\subset\mathbb{R}^{d}$ and $G\subset\mathbb{R}^{n}$ such that $f(x,y)\geq f_{u}\chi_{F}(x)\chi_{G}(y)$. By shifting $f$ along $\mathbb{Z}^{d}$-direction and making $F$ smaller, we may assume that $F\subset\Lambda_{1}$, where $\Lambda_{1}$ is the unit open cube in $\mathbb{R}^{d}$ centered at $0\in\mathbb{R}^{d}$.

    \item[(iii)] We also assume $f\in\ell^{1}(L^{p}(\mathbb{R}^{d+n}))$, the Birman-Solomyak space, with $p\geq2$ and $p>d+n$.
\end{itemize}
\end{itemize}

For the self-adjointness of $H_{\omega}$, $\omega\in\Omega$, the assumption $f\in\ell^{1}(L^{p}(\mathbb{R}^{d+n}))$ with $p\geq2$ and $p>d+n$ in $\rm(H2)(iii)$ is a little stronger, but we need this stronger assumption for imposing boundary conditions (see \cite[Assumption 2.7]{KW05} and \cite[Theorem C.2.4]{Si82}).

Let $\omega_{\min}=\inf\text{supp}\mathbb{P}_{0}$, we define $V:\mathbb{R}^{d+n}\rightarrow(-\infty,0]$ by
\begin{equation*}
V(x,y)=\omega_{\min}\sum_{i\in\mathbb{Z}^{d}}f(x-i,y).
\end{equation*}
and assume
\begin{itemize}
    \item[\rm(H3)]$\inf_{x\in\mathbb{R}^{d}}V(x,y)\rightarrow0$ as $|y|\rightarrow\infty$.
\end{itemize}

Assumption $\rm(H3)$ is used to guarantee the applicability of Weyl's theorem (see e.g. \cite[Theorem XIII.14]{RS78}) on the stability of essential spectrum. Moreover, assumption $\rm(H2)$ and $\rm(H3)$ ensure that $V\in L^{p}_{\rm unif,loc}(\mathbb{R}^{d+n})\subset\mathcal{K}(\mathbb{R}^{d+n})$
with $p\geq2$ and $p>d+n$.

Under above assumptions, we are able to prove the following fundamental results.

\begin{lem}
Suppose $\rm(H1)$, $\rm(H2)$ and $\rm(H3)$. There hold the following statements.
\begin{itemize}
    \item[\rm(i)] $H_{\omega}$, $\omega\in\Omega$ is almost surely essentially self-adjoint on $\mathcal{C}_{0}^{\infty}(\mathbb{R}^{d+n})$;

    \item[\rm(ii)] $H_{\omega}$, $\omega\in\Omega$ is $\mathbb{Z}^{d}$-ergodic. Hence, there's $\Sigma\subset\mathbb{R}$ such that $\sigma(H_{\omega})=\Sigma$ a.e. $\omega\in\Omega$;

    \item[\rm(iii)] Let $E_{0}=\inf\sigma(H_{\rm per})$, where
        \begin{equation}\label{periodic-operator}
        H_{\rm per}=-\Delta+V_{0}+V
        \end{equation}
Then $\inf\Sigma=E_{0}$, that is, $\inf\sigma(H_{\omega})=\inf\sigma(H_{\rm per})$ a.e. $\omega\in\Omega$.
\end{itemize}
\end{lem}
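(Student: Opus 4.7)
For (i), the plan is the standard Kato-class argument: (H1) gives $V_{0}\in\mathcal{K}(\mathbb{R}^{d+n})$ so that $-\Delta+V_{0}$ is essentially self-adjoint on $\mathcal{C}_{0}^{\infty}(\mathbb{R}^{d+n})$, and I would use compactness of $\text{supp}\,\mathbb{P}_{0}$ together with (H2)(iii) to see that $V_{\omega}\in L^{p}_{\text{unif,loc}}(\mathbb{R}^{d+n})\subset\mathcal{K}(\mathbb{R}^{d+n})$ almost surely, which via the Simon/Kato theorem cited just after (H3) yields essential self-adjointness of $H_{\omega}$ on $\mathcal{C}_{0}^{\infty}$. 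For (ii), define the $\mathbb{Z}^{d}$-action $(\tau_{i}\omega)_{j}:=\omega_{j-i}$ on $\Omega$; it is measure-preserving and ergodic because the $\omega_{j}$ are i.i.d.\ with product measure. Setting $(U_{i}\psi)(x,y):=\psi(x-i,y)$, the periodicity of $V_{0}$ in (H1) gives $U_{i}(-\Delta+V_{0})U_{i}^{*}=-\Delta+V_{0}$, and reindexing the sum defining $V_{\omega}$ gives $U_{i}V_{\omega}U_{i}^{*}=V_{\tau_{i}\omega}$, so $U_{i}H_{\omega}U_{i}^{*}=H_{\tau_{i}\omega}$; Pastur-Kirsch-Martinelli then supplies the deterministic closed set $\Sigma$ with $\sigma(H_{\omega})=\Sigma$ a.s.

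For (iii), one direction is immediate: since $\omega_{i}\geq\omega_{\min}$ and $f\geq 0$, pointwise $V_{\omega}-V=\sum_{i}(\omega_{i}-\omega_{\min})f(\cdot-i,\cdot)\geq 0$, so $H_{\omega}\geq H_{\text{per}}$ in form sense on the common core $\mathcal{C}_{0}^{\infty}$ and $\inf\sigma(H_{\omega})\geq\inf\sigma(H_{\text{per}})=E_{0}$ a.s. For the reverse inequality, I would build an approximate Weyl sequence for $H_{\omega}$ at $E_{0}$: fix $\varepsilon>0$ and, using $E_{0}\in\sigma(H_{\text{per}})$, pick $\phi\in\mathcal{C}_{0}^{\infty}(\mathbb{R}^{d+n})$ with $\|\phi\|=1$, $\text{supp}\,\phi\subset\Lambda_{R}\times\mathbb{R}^{n}$ for some $R$, and $\|(H_{\text{per}}-E_{0})\phi\|<\varepsilon$ (standard cutoff of a Weyl sequence, possible since $V_0+V$ is $\mathbb{Z}^d$-periodic in $x$). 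By $\mathbb{Z}^{d}$-translation invariance of $H_{\text{per}}$ in $x$, the translates $\phi_{k}(x,y):=\phi(x-k,y)$ satisfy the same bound for every $k\in\mathbb{Z}^{d}$. The defect $(H_{\omega}-H_{\text{per}})\phi_{k}=\sum_{j}(\omega_{j}-\omega_{\min})f(\cdot-j,\cdot)\phi_{k}$ I would split into a near-field part $|j-k|\leq R'$ and a far-field tail; the tail is made uniformly small in $k$ using the $\ell^{1}(L^{p})$-structure of $f$ in (H2)(iii) together with the uniform bound $|\omega_{j}-\omega_{\min}|\leq M$ from compactness of $\text{supp}\,\mathbb{P}_{0}$, and once $R'$ is fixed the near-field term is controlled by $\max_{|j-k|\leq R'}|\omega_{j}-\omega_{\min}|$ times a finite constant. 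Since $\omega_{\min}\in\text{supp}\,\mathbb{P}_{0}$, the event that every $\omega_{j}$ in $(\Lambda_{R'}+k)\cap\mathbb{Z}^d$ lies within $\delta$ of $\omega_{\min}$ has positive probability; by independence and Borel-Cantelli it occurs for infinitely many $k$ almost surely, yielding $\|(H_{\omega}-E_{0})\phi_{k}\|<2\varepsilon$ and hence $\inf\sigma(H_{\omega})\leq E_{0}$. The main obstacle is precisely this tail-versus-probability split: because $f$ is not compactly supported, one must fix $R'$ from the $\ell^{1}(L^{p})$-tail bound first and only then apply Borel-Cantelli at that finite scale; the rest is bookkeeping.
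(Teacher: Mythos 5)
Your proposal is essentially correct, and it is worth noting that the paper itself does not prove this lemma at all: it simply cites \cite[Propositions 1.1, 1.2]{KW06}, whose proofs follow the same standard route you outline (Kato-class arguments for self-adjointness, the covariance relation $U_{i}H_{\omega}U_{i}^{*}=H_{\tau_{i}\omega}$ plus ergodicity of the i.i.d.\ shift for the deterministic spectrum, the pointwise bound $V_{\omega}\geq V$ for $\inf\sigma(H_{\omega})\geq E_{0}$, and translated approximate ground states together with a Borel--Cantelli argument for the reverse inequality). So you have supplied a self-contained argument where the paper defers to the literature. Two small points of bookkeeping deserve explicit mention. First, in the Borel--Cantelli step you need the events $A_{k}=\{\omega_{j}-\omega_{\min}<\delta \text{ for all } |j-k|\leq R'\}$ to be \emph{independent}, so the translates $k$ must be chosen along a sparse subsequence of $\mathbb{Z}^{d}$ for which the blocks $\{j:|j-k|\leq R'\}$ are pairwise disjoint; only then does the second Borel--Cantelli lemma give infinitely many good $k$ almost surely (and one should intersect the resulting full-measure events over a countable sequence $\varepsilon\downarrow 0$ before concluding $E_{0}\in\sigma(H_{\omega})$ a.s.). Second, your far-field estimate needs $L^{2}$-norms of $f(\cdot-j,\cdot)$ on the fixed compact set $\mathrm{supp}\,\phi_{k}$; this is where $p\geq 2$ in $\rm(H2)(iii)$ enters, via H\"older on the bounded support, and the uniformity in $k$ comes from the fact that $\mathrm{supp}\,\phi_{k}$ is just the translate of $\mathrm{supp}\,\phi$ by $(k,0)$, so the tail of the $\ell^{1}(L^{p})$-sum is the same for every $k$. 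With these details spelled out, the argument is complete and matches the cited proof in spirit.
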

\begin{proof}
See \cite[Proposition 1.1]{KW06} for $\rm(i)$ and $\rm(ii)$, and \cite[Proposition 1.2]{KW06} for $\rm(iii)$.
\end{proof}

To study the IDSS for negative energies, or below the bulk spectrum $\sigma(-\Delta+V_{0})$ (by assumption $\rm(H1)$, $\inf\sigma(-\Delta+V_{0})=0$), we assume
\begin{itemize}
    \item[\rm(H4)] The ground state energy of $H_{per}$, or the bottom of the almost sure spectrum of $H_{\omega}$, is negative, that is, $E_{0}<0$.
\end{itemize}

Assumption \rm{(H4)} is readily satisfied if $\omega_{\min}$, hence $V$, is negative enough because of Hardy's inequality (see \cite{OK90} for example).

Finally, we state our main results. Recall that $S=\Lambda\times\mathbb{R}^{n}$ with $\Lambda$ being an open bounded set in $\mathbb{R}^{d}$ and $H_{\omega,S}^{X}$ denotes the operator $H_{\omega}$ restricted to $L^{2}(S)$ with $X$ boundary condition on $\partial S$. For $E<0$, we define the eigenvalue counting function
\begin{equation*}
N\big(H_{\omega,S}^{X},E\big):=\#\Big\{n\in\mathbb{N}_{0}\Big|E_{n}(H_{\omega,S}^{X})\leq E\Big\},
\end{equation*}
where $\mathbb{N}_{0}=\mathbb{N}\cup\{0\}$ and $\#\{\cdot\}$ is the cardinal number of the set $\{\cdot\}$. We remark that $N\big(H_{\omega,S}^{X},E\big)$ is almost surely finite for any $E<0$ due to the fact that the essential spectrum of $H_{\omega,S}^{X}$ is contained in $[0,\infty)$ by $\rm(H1)$, $\rm(H3)$ and Weyl's essential spectrum theorem (see e.g. \cite[Theorem XIII.14]{RS78}). For the set $\Lambda$ in $\mathbb{R}^{d}$, we consider the following three kinds:
\begin{itemize}
    \item[\rm (i)] cubes: $\Lambda_{L}=\big(-\frac{L}{2},\frac{L}{2}\big)^{d}$, $L\geq1$,

    \item[\rm (ii)] cuboids: $\Lambda_{K}^{1}(L)=\big(-\frac{K+L-1}{2},\frac{K+L-1}{2}\big)^{d_{1}}\times\big(-\frac{K}{2},\frac{K}{2}\big)^{d_{2}}$, $L,K\geq1$,

    \item[\rm (iii)] cuboids: $\Lambda_{K}^{2}(L)=\big(-\frac{K}{2},\frac{K}{2}\big)^{d_{1}}\times\big(-\frac{K+L-1}{2},\frac{K+L-1}{2}\big)^{d_{2}}$, $L,K\geq1$.
\end{itemize}
The corresponding strip $S$ are denoted by $S_{L}$, $S_{K}^{1}(L)$ and $S_{K}^{2}(L)$, respectively. With the help of above notations, we are able to state our first main result regarding the existence and uniqueness of the IDSS.

\begin{thm}\label{theorem-IDSS}
Suppose $\rm(H1)$, $\rm(H2)$, $\rm(H3)$ and $\rm(H4)$.
\begin{itemize}
    \item[\rm (i)] For $E<0$, the limit \begin{equation*}N^{X}(E):=\lim_{L\rightarrow\infty}\frac{N\big(H_{\omega,S_{L}}^{X},E\big)}{|\Lambda_{L}|}\end{equation*}exists and almost surely non random. Moreover, $N^{D}(E)=N^{N}(E)$ for all but possible countably many $E<0$.

    \item[\rm (ii)] Let $k\in\{1,2\}$ and $L\geq1$. For $E<0$, the limit \begin{equation*}N_{k,L}^{X}(E):=\lim_{K\rightarrow\infty}\frac{N\big(H_{\omega,S_{K}^{k}(L)}^{X},E\big)}{|\Lambda_{K}^{k}(L)|}\end{equation*}exists and almost surely non random. Moreover, $N_{k,L}^{D}(E)=N_{k,L}^{N}(E)$ for all $L\geq1$ and all but possible countably many $E<0$.

    \item[\rm (iii)] If we denote the common values obtained in $\rm(i)$ and $\rm (ii)$ by $N$ and $N_{k,L}$, $k\in\{1,2\}$, $L\geq1$, respectively, then we have $N(E)=N_{k,L}(E)$ for $k\in\{1,2\}$, all $L\geq1$ and all but possible countably many $E<0$.
\end{itemize}
\end{thm}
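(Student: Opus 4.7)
The proof extends the Kirsch-Warzel argument for cubes in \cite{KW06} to the cuboids $\Lambda_K^k(L)$ required for the anisotropic Lifshitz-tails analysis. The common engine is Dirichlet-Neumann bracketing combined with a multiparameter subadditive ergodic theorem (Akcoglu-Krengel), supplemented by a boundary-correction estimate to obtain uniqueness $N^D = N^N$ at all but countably many energies.

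\textbf{Parts (i) and (ii).} For any open cuboid $\Lambda \subset \mathbb{R}^d$, set $F_\Lambda^D = -N(H_{\omega,\Lambda\times\mathbb{R}^n}^D, E)$ and $F_\Lambda^N = N(H_{\omega,\Lambda\times\mathbb{R}^n}^N, E)$. Inserting Dirichlet walls lifts eigenvalues, so $F^D$ is subadditive under disjoint decompositions; inserting Neumann walls lowers them, so $F^N$ is also subadditive. Since $\{\omega_i\}_{i \in \mathbb{Z}^d}$ is i.i.d., both processes are $\mathbb{Z}^d$-stationary, and $\mathbb{E}|F_{\Lambda_1}^X| < \infty$ for $E < 0$ by a standard Feynman-Kac trace bound exploiting the exponential localization of $e^{-tH_{\omega,S_1}^X}$ in the $y$-direction. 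The Akcoglu-Krengel theorem then gives almost sure, non-random convergence of $|\Lambda|^{-1} F_\Lambda^X$ along any Van Hove sequence, covering the cube sequence $\{\Lambda_L\}_L$ of (i) and, for each fixed $L$, the cuboid sequences $\{\Lambda_K^k(L)\}_K$ of (ii). To upgrade to uniqueness $N^D = N^N$ (and its cuboid analogue), the key observation is that for $E < 0$ any eigenfunction of $H_{\omega,\Lambda\times\mathbb{R}^n}^X$ with eigenvalue $\leq E$ decays exponentially in $y$ uniformly in $\omega$; this reduces the boundary estimate to a $(d-1)$-dimensional count, giving $|N(H_{\omega,\Lambda\times\mathbb{R}^n}^D, E) - N(H_{\omega,\Lambda\times\mathbb{R}^n}^N, E)| = O(|\partial \Lambda|) = o(|\Lambda|)$. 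Monotonicity of the limiting functions then forces $N^D = N^N$ off a countable set.

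\textbf{Part (iii).} Fix $L \geq 1$ and $k \in \{1,2\}$. For integers $M \gg K$, tile the interior of $\Lambda_M$ by disjoint translates $\{\Lambda_K^k(L;j)\}_j$ with a residual layer of relative volume $o_{M\to\infty}(1)$. Dirichlet-Neumann bracketing gives
\[
\sum_{j} N(H_{\omega, S_K^k(L;j)}^D, E) \leq N(H_{\omega, S_M}^D, E) \leq N(H_{\omega, S_M}^N, E) \leq \sum_{j} N(H_{\omega, S_K^k(L;j)}^N, E) + R_M,
\]
where $R_M$ is the residual contribution, controlled by the same transverse-decay argument. Dividing by $|\Lambda_M|$ and sending $M \to \infty$, the outer terms converge by part (i) to the common value $N(E)$ at continuity points, while the inner sums converge, by the $\mathbb{Z}^d$-ergodic theorem applied to the i.i.d. shifts, to $\mathbb{E}[N(H_{\omega, S_K^k(L)}^X, E)]/|\Lambda_K^k(L)|$. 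Letting $K \to \infty$ and using monotone/dominated convergence identifies this limit with $N_{k,L}^X(E) = N_{k,L}(E)$, so $N(E) = N_{k,L}(E)$ at all common continuity points.

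\textbf{Main obstacle.} The most delicate step is the uniqueness $N^D = N^N$ for the surface density. In the bulk case, Weyl's law provides an $O(L^{d-1})$ boundary correction essentially for free, but here the IDSS is a $d$-dimensional density associated to states localized in the $n$ transverse directions, and one must genuinely exploit the transverse exponential decay (available precisely because $E < 0$ lies strictly below the essential spectrum of $H_{\omega, S}^X$) to effect the reduction to a $(d-1)$-dimensional estimate. A secondary technical nuisance is that the cuboid $\Lambda_K^k(L)$ has a side of length $K + L - 1$ that is typically non-integer, requiring a rounding argument to align with the $\mathbb{Z}^d$-lattice structure demanded by Akcoglu-Krengel; this is routine but must be arranged so as not to spoil the $o(|\Lambda|)$ boundary bound used throughout.
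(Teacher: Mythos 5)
Your existence argument (Dirichlet--Neumann bracketing plus the Akcoglu--Krengel theorem) is exactly how the paper obtains the limits in (i) and (ii), but the uniqueness step $N^{D}=N^{N}$ (and $N_{k,L}^{D}=N_{k,L}^{N}$) is where your proposal has a genuine gap. You assert that the transverse exponential decay of eigenfunctions ``reduces the boundary estimate to a $(d-1)$-dimensional count, giving $|N(H^{D}_{\omega,S},E)-N(H^{N}_{\omega,S},E)|=O(|\partial\Lambda|)$.'' No such pointwise-in-energy surface bound on the difference of Dirichlet and Neumann counting functions is available, and nothing in your sketch produces one; this is precisely the hard step, not a routine consequence of decay. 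What the paper (following Kirsch--Warzel) actually does is: (a) use the partial exponential decay in $y$ (Theorem \ref{theorem-expo-decay-eigenfun}, Lemma \ref{lemma-auxiliary}) to replace the infinite strip by a box of height $M=(K+L-1)^{\rho_{1}}K^{\rho_{2}}$ at the cost of an exponentially small energy shift; (b) control the \emph{energy-integrated} difference $\int_{-\infty}^{\eta}\big(N(H^{N,D}_{\omega,S_{K}^{M}(L)},E)-N(H^{D,D}_{\omega,S_{K}^{M}(L)},E)\big)dE$ via the Laplace-transform estimate of Kirsch--Martinelli, H\"older's inequality for trace ideals, and explicit Neumann/Dirichlet eigenvalues on cuboids (Appendices), choosing $\rho_{1}+\rho_{2}<\frac{1}{np_{0}}$ so the normalized bound vanishes as $K\to\infty$; and (c) convert the vanishing integrated difference into equality at continuity points. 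Your proposal replaces all of (a)--(c) by an unproven claim, so the ``Moreover'' parts of (i) and (ii) are not established.

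For part (iii) your route is genuinely different from the paper's: you tile a large cube $\Lambda_{M}$ by translates of $\Lambda_{K}^{k}(L)$, bracket, and apply the ergodic theorem to the sum over tiles, whereas the paper sandwiches $y$-truncated strips, $S_{K}^{K^{\rho}}\subset S_{K}^{M}(L)\subset S_{K+L}^{(K+L)^{\rho}}$, and invokes the KW06 representation of $N(E)$ through such truncated boxes together with the alternative representation \eqref{IDSS-alternating-representation-1}. Your scheme can be made to work, but note two points: the residual term $R_{M}$ requires a volume-proportional bound on the number of eigenvalues below $\eta<0$ for the Neumann strip operator, which is not free (the heat semigroup on the infinite strip is not trace class), so you end up needing the same $y$-truncation-plus-trace machinery the paper develops; and your limit interchange uses the identification of the limit with $\sup_{K}$ (resp.\ $\inf_{K}$) of normalized expectations for the super-(sub-)additive processes, which you should state explicitly. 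As written, part (iii) is plausible but leans on the unproved uniqueness from (ii) and on the unproved $R_{M}$ estimate.
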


The proof of the above theorem is given in Subsection \ref{subsec-proof}. Given Theorem \ref{theorem-IDSS}, we make the following definition.

\begin{defn}
$N(E)$ is well-defined for all but possible countably many $E<0$ and it is called the integrated density of surface states for negative energies for $H_{\omega}$, $\omega\in\Omega$.
\end{defn}

We remark that there are other ways to define the IDSS. See \cite{EKSS88,EKSS90,KS00,KS01} and references therein. In these literature, the IDSS is defined for all energies and, for negative energies, coincides with the definition above. We refer to \cite{KW06} for more discussions.

To state another main result, we make additional assumptions on both the single-site impurity potential $f$ and the common probability measure $\mathbb{P}_{0}$.

\begin{itemize}
\item[\rm(H5)] Let $d_{1},d_{2}\in\mathbb{N}$ be such that $d=d_{1}+d_{2}$. There exist $f_{0}>0$, $f_{u}>0$, $\alpha_{1}>d_{1}$, $\alpha_{2}>d_{2}$ and a nonempty Borel set $G\subset\mathbb{R}^{n}$ with nonzero finite Lebesgue measure such that
\begin{equation*}
\frac{f_{u}}{(1+|x_{1}|)^{\alpha_{1}}+(1+|x_{2}|)^{\alpha_{2}}}\chi_{G}(y)\leq f(x,y)\leq\frac{f_{0}}{(1+|x_{1}|)^{\alpha_{1}}+(1+|x_{2}|)^{\alpha_{2}}}
\end{equation*}
for all $x\in\mathbb{R}^{d}$ and $y\in\mathbb{R}^{n}$, where $x=(x_{1},x_{2})$ with $x_{k}\in\mathbb{R}^{d_{k}}$, $k=1,2$.
\end{itemize}

\begin{itemize}
\item[\rm(H6)] There are constants $C>0$, $N>0$ and $\epsilon_{0}>0$ such that
\begin{equation*}
\mathbb{P}_{0}\Big\{[\omega_{\min},\omega_{\min}+\epsilon)\Big\}\geq C\epsilon^{N}
\end{equation*}
for all $\epsilon\in(0,\epsilon_{0}]$.
\end{itemize}

Assumption $\rm(H5)$ is referred to as the anisotropic decay of $f$, i.e., anisotropic decay in the $x$-direction and uniform boundedness in the $y$-dirction. This assumption determines the asymptotic behavior of $N(E)$ near $E_{0}$. $\rm(H6)$ is a technical assumption, which is used to obtain a lower bound in the proof of Lifshitz tails.

We now state the main result regarding the asymptotic behavior of $N(E)$, $E<0$ near the bottom of the spectrum, i.e., $E_{0}$.

\begin{thm}\label{main-theorem}
Suppose $\rm(H1)$, $\rm(H2)$, $\rm(H3)$, $\rm(H4)$, $\rm(H5)$ and $\rm(H6)$. Let $\gamma_{k}=\frac{d_{k}}{\alpha_{k}}$, $k=1,2$ and $\gamma=\gamma_{1}+\gamma_{2}$. Consider the following three regimes:
\begin{itemize}
\item[\rm(i)] quantum regime: $\frac{d_{1}}{2}>\frac{\gamma_{1}}{1-\gamma}$ and $\frac{d_{2}}{2}>\frac{\gamma_{2}}{1-\gamma}$;

\item[\rm(ii)] quantum-classical/classical-quantum regime:
\begin{equation*}
\frac{d_{1}}{2}>\frac{\gamma_{1}}{1-\gamma}\,\,\text{and}\,\,\frac{d_{2}}{2}\leq\frac{\gamma_{2}}{1-\gamma},\,\,\text{or}\,\,\frac{d_{1}}{2}\leq\frac{\gamma_{1}}{1-\gamma}\,\,\text{and}\,\,
\frac{d_{2}}{2}>\frac{\gamma_{2}}{1-\gamma};
\end{equation*}

\item[\rm(iii)] classical regime: $\frac{d_{1}}{2}\leq\frac{\gamma_{1}}{1-\gamma}$ and $\frac{d_{2}}{2}\leq\frac{\gamma_{2}}{1-\gamma}$.
\end{itemize}
Then, the integrated density of surface states $N(E)$ for negative energies $E<0$ exhibits Lifshitz tails near $E_{0}$ in all three regimes with
\begin{equation}\label{lifshitz-tails}
\lim_{E\downarrow E_{0}}\frac{\ln|\ln N(E)|}{\ln(E-E_{0})}=-\max\bigg\{\frac{d_{1}}{2},\frac{\gamma_{1}}{1-\gamma}\bigg\}-\max\bigg\{\frac{d_{2}}{2},\frac{\gamma_{2}}{1-\gamma}\bigg\}.
\end{equation}
\end{thm}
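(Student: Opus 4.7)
The plan is to establish matching upper and lower bounds on $N(E)$ as $E\downarrow E_{0}$, exploiting the cuboid version of the IDSS supplied by Theorem~\ref{theorem-IDSS}(ii)--(iii). Writing $\epsilon = E-E_{0}$, the goal is $\ln N(E) \asymp -\epsilon^{-\beta}$ with $\beta = \max\{d_{1}/2,\gamma_{1}/(1-\gamma)\} + \max\{d_{2}/2,\gamma_{2}/(1-\gamma)\}$. The use of cuboid strips $S^{k}_{K}(L)$, rather than only cubic strips $S_{L}$ as in \cite{KW06}, is indispensable in the quantum-classical/classical-quantum regime because the optimal localization length differs between the two coordinate blocks $\mathbb{R}^{d_{1}}$ and $\mathbb{R}^{d_{2}}$.

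For the upper bound I would use Dirichlet bracketing to write $N(E) \le |\Lambda^{k}_{K}(L)|^{-1}\,\mathbb{E}[N(H^{D}_{\omega,S^{k}_{K}(L)},E)]$ and then pass from the counting function to a ground-state probability via the standard reduction $\mathbb{E}[N(H^{D}_{\omega,S^{k}_{K}(L)},E)] \le N_{\max}\,\mathbb{P}(E_{0}(H^{D}_{\omega,S^{k}_{K}(L)})\le E)$, where $N_{\max}=O(|\Lambda^{k}_{K}(L)|)$ is a deterministic bound on the number of negative eigenvalues of the localized operator. The remaining probability is estimated by testing the Rayleigh quotient against the periodic Dirichlet ground state on the strip: the condition $E_{0}(H^{D}_{\omega,S^{k}_{K}(L)})\le E_{0}+\epsilon$ forces the anisotropically weighted empirical sum $\sum_{i\in \Lambda^{k}_{K}(L)\cap\mathbb{Z}^{d}}(\omega_{i}-\omega_{\min})\,w_{i}$ to lie below $c\epsilon$, with weights $w_{i}$ dictated by the $(1+|x_{1}|)^{-\alpha_{1}}+(1+|x_{2}|)^{-\alpha_{2}}$ profile of (H5). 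Choosing $K$ and $L$ optimally---of order $\epsilon^{-1/2}$ in a block where $d_{k}/2>\gamma_{k}/(1-\gamma)$ and of order $\epsilon^{-1/(\alpha_{k}-d_{k})}$ in a block where the opposite inequality holds---an anisotropic large-deviation estimate in the spirit of \cite{KW05} yields a bound of the form $\exp(-c\epsilon^{-\beta})$.

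For the lower bound I would use Neumann bracketing together with the spectral gap estimate established in Section~\ref{sec-spectral-gap}, which controls $E_{1}(H^{N}_{\mathrm{per},S^{k}_{K}(L)})-E_{0}(H^{N}_{\mathrm{per},S^{k}_{K}(L)})$ from below by an explicit inverse power of the side lengths. Starting from $N(E) \ge |\Lambda^{k}_{K}(L)|^{-1}\,\mathbb{P}\bigl(E_{0}(H^{N}_{\omega,S^{k}_{K}(L)}) \le E < E_{1}(H^{N}_{\omega,S^{k}_{K}(L)})\bigr)$, I would force the event by requiring every $\omega_{i}$ in the cuboid to lie in $[\omega_{\min},\omega_{\min}+\delta)$ with $\delta$ of order $\epsilon$: the Rayleigh-quotient test with the periodic Neumann ground state then pushes $E_{0}(H^{N}_{\omega,S^{k}_{K}(L)})$ below $E_{0}+\epsilon$, while the spectral gap together with a Temple/min-max perturbation argument keeps $E_{1}$ above $E_{0}+\epsilon$ so long as the random perturbation is small compared to the gap. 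Assumption (H6) supplies $\mathbb{P}_{0}([\omega_{\min},\omega_{\min}+\delta))^{|\Lambda^{k}_{K}(L)\cap\mathbb{Z}^{d}|}\ge\exp(-c\,|\Lambda^{k}_{K}(L)|\,\ln(1/\delta))$, and the same optimization of $(K,L)$ as in the upper bound produces the matching exponent $-\epsilon^{-\beta}$.

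The main obstacle is the mixed regime: the two coordinate blocks demand incompatible scalings, and the cuboid aspect ratio $K/L$ must diverge (or vanish) with $\epsilon$. Both the large-deviation estimate for the anisotropically weighted sum and the spectral-gap estimate must therefore be carried out uniformly in this aspect ratio, and the periodic Neumann ground state on a stretched cuboid strip has to be shown to retain an amplitude comparable to a constant throughout the strip so that the Rayleigh-quotient test delivers the correct power of $\epsilon$. These two uniform statements are exactly what Section~\ref{sec-spectral-gap} is set up to provide, and matching them against the large-deviation analysis in all three regimes---and in particular verifying that the optimization of $(K,L)$ is consistent at the boundary between regimes---is where the bulk of the technical work will sit.
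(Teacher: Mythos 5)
Your proposal inverts the roles of the Dirichlet and Neumann restrictions, and this breaks both halves of the argument at the first step. Dirichlet eigenvalue counting functions are superadditive, so the thermodynamic limit is a supremum (see \eqref{equality-1}) and finite-volume Dirichlet data give a \emph{lower} bound, $N(E)\ge|\Lambda|^{-1}\mathbb{P}\{E_{0}(H^{D}_{\omega,S})\le E\}$; your claimed upper bound $N(E)\le|\Lambda|^{-1}\mathbb{E}[N(H^{D}_{\omega,S},E)]$ goes the wrong way, and symmetrically a Neumann finite-volume probability does not bound $N(E)$ from below. The upper bound in the paper is obtained not with Neumann but with the Mezincescu boundary condition of Subsection \ref{subsec-Mezincescu-spectral-gap}: Lemma \ref{sandwich-bound} gives $N(E)\le|\Lambda|^{-1}N(H^{\chi}_{{\rm per},S},E)\,\mathbb{P}\{E_{0}(H^{\chi}_{\omega,S})\le E\}$, and the Mezincescu choice is exactly what makes $\psi_{0}|_{S}$ the localized periodic ground state with energy equal to $E_{0}$ and yields the $L^{-2}$ spectral gaps of Lemmas \ref{spectral-gap-1} and \ref{spectral-gap}. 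With Neumann (or Dirichlet) conditions the localized periodic ground state energy is in general strictly below (resp.\ above) $E_{0}$, so the Temple step you invoke no longer produces $E_{0}\ge E_{0}+\tfrac12\langle\psi,\widetilde{W}\psi\rangle$; moreover a Rayleigh-quotient test only bounds $E_{0}$ from above, whereas the implication you need for the upper bound (small $E_{0}$ forces a small weighted sum of couplings) requires a lower bound on $E_{0}$, i.e.\ Temple's inequality in the Mezincescu setting. Note also that the paper's lower bound is carried out on cubic strips only (Lemma \ref{lemma-lower-bound}); the cuboids enter only through the upper bound in the mixed regime.

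The geometry and scaling you propose in the mixed regime are also off. The correct classical length scale is $\epsilon^{-1/(\alpha_{k}(1-\gamma))}$ with the \emph{total} $\gamma=\gamma_{1}+\gamma_{2}$; your choice $\epsilon^{-1/(\alpha_{k}-d_{k})}$ would produce the exponent $\gamma_{k}/(1-\gamma_{k})$ instead of $\gamma_{k}/(1-\gamma)$ and is correct only when the other block is absent. More importantly, stretching the cuboid itself to the classical scale is self-defeating: the Mezincescu gap is only of order (longest side)$^{-2}$, which is then much smaller than $\epsilon$, so the perturbation is not small relative to the gap and Temple's inequality fails; Section \ref{sec-spectral-gap} supplies gaps for $S_{L}$, $S_{L,1}$ and $S_{1,L}$, not a gap uniform in an arbitrary diverging aspect ratio. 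The paper instead keeps the classical block at unit width ($S_{L,1}$ or $S_{1,L}$, Lemma \ref{spectral-gap}), takes $L\sim\epsilon^{-1/2}$, and extracts the classical exponent from the \emph{tails} of $f$ generated by sites at distance $R\sim L^{2/(\alpha_{2}(1-\gamma))}$ \emph{outside} the strip (the potentials $\widehat{W}_{\omega,R}$ in the proof of Theorem \ref{theorem-upper-bound-q/cl}); the purely classical regime is handled on the fixed strip $S_{1}$ in the same spirit. A related omission occurs in your lower bound: conditioning only the couplings inside the cuboid to lie within $\delta$ of $\omega_{\min}$ does not control the potential created inside the strip by exterior sites, whose tails are precisely what matters in the classical directions; the paper handles this by splitting $\widehat{W}_{\omega}$ into $W_{\Gamma_{L}}$ (with $\Gamma_{L}$ reaching out to distance $2L^{\beta_{k}}$) and $W_{\Gamma_{L}^{c}}$, the latter controlled on an event of probability at least $\tfrac12$.
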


The proof of the above theorem is given in Section \ref{proof-of-Lifshitz-tails}. We end this section by making a remark about Theorem \ref{main-theorem}.

\begin{rem}\label{remark-main-theorem}
Results similar to Theorem \ref{theorem-IDSS} can be proven with $f$ anisotropically decaying in a more general way. That is, if $f$ satisfies
\begin{equation*}
\frac{f_{u}}{\sum_{k=1}^{m}(1+|x_{k}|)^{\alpha_{k}}}\chi_{G}(y)\leq f(x,y)\leq\frac{f_{0}}{\sum_{k=1}^{m}(1+|x_{k}|)^{\alpha_{k}}}
\end{equation*}
for all $x\in\mathbb{R}^{d}$ and $y\in\mathbb{R}^{n}$, where $x=(x_{1},\dots,x_{m})$ with $x_{k}\in\mathbb{R}^{d_{k}}$, $k=1,\dots,m$ and $d=\sum_{k=1}^{m}d_{k}$, then
\begin{equation*}
\lim_{E\downarrow E_{0}}\frac{\ln|\ln N(E)|}{\ln(E-E_{0})}=-\sum_{k=1}^{m}\max\bigg\{\frac{d_{k}}{2},\frac{\gamma_{k}}{1-\gamma}\bigg\},
\end{equation*}
where $\gamma_{k}=\frac{d_{k}}{\alpha_{k}}$, $k=1,2,\dots,m$ and $\gamma=\sum_{k=1}^{m}\gamma_{k}$.
\end{rem}


\section{The Integrated Density of Surface States for Negative Energies}\label{IDSS}

This section is devoted to the study of existence and uniqueness of the IDSS for $H_{\omega}$, $\omega\in\Omega$, that is, we prove Theorem \ref{theorem-IDSS}. Throughout this section, assumptions $\rm(H1)$, $\rm(H2)$, $\rm(H3)$ and $\rm(H4)$ are assumed to be satisfied.

\subsection{Proof of Theorem \ref{theorem-IDSS}}\label{subsec-proof}

Note that Theorem \ref{theorem-IDSS} $\rm (i)$ is a special case of \cite[Theorem 1.3]{KW06}. The proof of $\rm (ii)$ and $\rm (iii)$ in Theorem \ref{theorem-IDSS} are broken into several parts.

\begin{thm}\label{theorem-IDSS-cuboid}
Let $k\in\{1,2\}$ and $L\geq1$. Then,
\begin{itemize}

\item[\rm(i)] for $E<0$, the limit
\begin{equation*}\label{IDSS-cuboids}
N_{k,L}^{X}(E):=\lim_{K\rightarrow\infty}\frac{N\big(H_{\omega,S_{K}^{k}(L)}^{X},E\big)}{|\Lambda_{K}^{k}(L)|}
\end{equation*}
exists and almost surely non random;

\item[\rm(ii)] $N_{k,L}^{D}(E)=N_{k,L}^{N}(E)$ for all but possible countably many $E<0$.
\end{itemize}
\end{thm}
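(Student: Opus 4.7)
The plan is to follow the proof of the cube case (Theorem \ref{theorem-IDSS}(i), namely \cite[Theorem 1.3]{KW06}) and adapt it to the asymmetric cuboid sequence $\{\Lambda_K^k(L)\}_{K\geq 1}$ with $L\geq 1$ fixed. The key geometric observation is that for fixed $L$ both side lengths of $\Lambda_K^k(L)$, namely $K+L-1$ and $K$, are $\Theta(K)$, so $|\partial\Lambda_K^k(L)|/|\Lambda_K^k(L)| = O(1/K)$ and $|\Lambda_K^k(L)|/K^d \to 1$; in particular $\{\Lambda_K^k(L)\}$ is a Van Hove sequence. Hence the superadditive ergodic machinery used in the cube case transfers directly, and because the Neumann Laplacian on a cuboid still has explicit separable eigenvalues, the uniqueness ingredient of \cite{KW06} also transfers (precisely the point emphasized in the remark about general domains in the introduction).

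For part $\rm(i)$, I would introduce the $\mathbb{Z}^d$-covariant stochastic processes $F_X(\Lambda)(\omega) := N(H_{\omega,\Lambda\times\mathbb{R}^n}^X,E)$ for $X\in\{D,N\}$ and bounded open $\Lambda\subset\mathbb{R}^d$; these are almost surely finite for $E<0$ since by $\rm(H1)$, $\rm(H3)$ and Weyl's theorem the essential spectrum lies in $[0,\infty)$. Dirichlet--Neumann bracketing applied to disjoint decompositions of $\Lambda$ shows that $F_D$ is superadditive and $F_N$ is subadditive, while the $\mathbb{Z}^d$-ergodicity of $H_\omega$ supplies the needed covariance under lattice shifts. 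The Akcoglu--Krengel super/subadditive ergodic theorem then yields, along the Van Hove sequence $\{\Lambda_K^k(L)\}_{K\geq 1}$, the a.s.\ non-random limit $N_{k,L}^X(E)$.

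For part $\rm(ii)$, I would run the standard two-scale sandwich. Fix an auxiliary cube scale $L_0$, let $\{\Lambda_{L_0}(i)\}_{i\in J_K}$ be the maximal family of disjoint $L_0$-translates of $\Lambda_{L_0}$ lying inside $\Lambda_K^k(L)$, and let $B_K$ be the uncovered boundary slab. Dirichlet--Neumann bracketing gives
\begin{equation*}
\sum_{i\in J_K}\! N\bigl(H_{\omega,\Lambda_{L_0}(i)\times\mathbb{R}^n}^D,E\bigr) \;\leq\; N\bigl(H_{\omega,S_K^k(L)}^X,E\bigr) \;\leq\; \sum_{i\in J_K}\! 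N\bigl(H_{\omega,\Lambda_{L_0}(i)\times\mathbb{R}^n}^N,E\bigr) + N\bigl(H_{\omega,B_K\times\mathbb{R}^n}^N,E\bigr).
\end{equation*}
Since $|B_K| = O(K^{d-1}(L_0+L))$ and a standard a.s.\ volume bound gives $N(H_{\omega,B_K\times\mathbb{R}^n}^N,E) \leq C(E,\omega)|B_K|$, the boundary term, after dividing by $|\Lambda_K^k(L)| = \Theta(K^d)$, is $O(L_0/K) \to 0$. Birkhoff's theorem applied to the stationary family $\{F_X(\Lambda_{L_0}(i))\}_i$ then yields
\begin{equation*}
\frac{\mathbb{E} F_D(\Lambda_{L_0})}{|\Lambda_{L_0}|} \;\leq\; N_{k,L}^D(E) \;\leq\; N_{k,L}^N(E) \;\leq\; \frac{\mathbb{E} F_N(\Lambda_{L_0})}{|\Lambda_{L_0}|}.
\end{equation*}
Sending $L_0\to\infty$ and invoking the cube-case equality of these two expectation limits at continuity points (proved in \cite[Theorem 1.3]{KW06}) gives $N_{k,L}^D(E) = N_{k,L}^N(E)$ off the at-most countable discontinuity set of the two monotone functions.

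The main obstacle I anticipate is the careful bookkeeping of the uncovered slab $B_K$, which is slightly thicker than in the cube case because of the asymmetric side lengths of $\Lambda_K^k(L)$; since $L$ is held fixed while $L_0$ is sent to infinity only after $K\to\infty$, the slab's contribution to the normalized counting function indeed vanishes. Beyond this, no genuinely new analytic ingredients beyond those of \cite[Theorem 1.3]{KW06} are needed, which is precisely why the cuboid extension fits within the same framework and, via the uniqueness of the Akcoglu--Krengel limit along any Van Hove sequence, will ultimately also yield part $\rm(iii)$ of Theorem \ref{theorem-IDSS}.
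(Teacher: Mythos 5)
Your proposal is correct in outline, but it proves part (ii) by a genuinely different route than the paper. The paper never decomposes the cuboid into small cubes: it first truncates the strip in the $y$-direction at a polynomially growing height $M=(K+L-1)^{\rho_1}K^{\rho_2}$, using the almost-sure partially exponential decay of eigenfunctions (Theorem \ref{theorem-expo-decay-eigenfun}, proved via Feynman--Kac) to show that this truncation only shifts the energy by $CK^{d}e^{-\alpha M}$ (Lemma \ref{lemma-auxiliary}, Lemma \ref{lemma-alternative-representation}), and then compares Dirichlet and Neumann lateral boundary conditions on the resulting finite box by a Kirsch--Martinelli Laplace-transform/trace estimate \cite{KM82}, H\"older in trace ideals, and the explicit mixed-boundary eigenvalues on cuboids (Lemma \ref{lemma-app-difference-of-trace}); this is exactly where the cuboid geometry enters and why the paper emphasizes that the argument ``depends heavily on the eigenvalues of the Neumann Laplacian.'' You instead reduce to the cube case: two-scale Dirichlet--Neumann bracketing, an ergodic average over the $L_0$-cubes, and the sup/inf representation from the super-/subadditive ergodic theorem identify your expectation bounds with $N^{D}(E)$ and $N^{N}(E)$, and then $N^{D}=N^{N}$ for cubes is imported from \cite[Theorem 1.3]{KW06}. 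This is legitimate (the paper itself cites that theorem for part (i)) and is shorter; it also delivers Theorem \ref{theorem-IDSS}(iii) in the same stroke, whereas the paper obtains (iii) separately. What the paper's route buys is that its intermediate machinery (eigenfunction decay in $y$, the finite-box representation \eqref{IDSS-alternating-representation-1}) is reused later, and it re-proves the D/N insensitivity on the cuboid geometry itself rather than quoting it for cubes. Two steps in your write-up need more care than you give them: the slab estimate $N\big(H_{\omega,B_K\times\mathbb{R}^n}^{N},E\big)\leq C(E,\omega)|B_K|$ with a $K$-independent constant is not literally ``standard''---you should Neumann-bracket $B_K$ into unit cells and control the resulting sum by the ergodic theorem (or by expectations), which in turn uses the finiteness of $\mathbb{E}\,N\big(H_{\omega,S_1}^{N},E\big)$ for $E<0$, a genuine ingredient for surface models that is supplied by \cite{KW06}; and the pointwise ergodic theorem you invoke over the index regions $J_K$ is the multiparameter (Tempelman/Wiener) version for regular box sequences, which applies here only because the eccentricity of $\Lambda_K^k(L)$ stays bounded as $K\to\infty$ with $L$ fixed. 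With these points made explicit, your argument goes through.
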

\begin{proof}
We focus on the case $k=1$, since the results in the case $k=2$ can be proven in a similar manner. $\rm(i)$ is a simple consequence of the Akcoglu-Krengel ergodic theorem (see e.g. \cite{AK81}, \cite{KM82}). To prove the $\rm(ii)$, we first prove some lemmas.
\end{proof}

Let
\begin{equation*}
S_{K}^{M}(L)=\Lambda^{1}_{K}(L)\times\bigg(-\frac{M}{2},\frac{M}{2}\bigg)^{n}
\end{equation*}
for $L,K,M\geq1$. Denote by $H_{\omega,S_{K}^{M}(L)}^{X,Y}$ the operator $H_{\omega}$ restricted to $L^{2}(S_{K}^{M}(L))$ with $X$ boundary conditions on $\partial\Lambda^{1}_{K}(L)\times\big(-\frac{M}{2},\frac{M}{2}\big)^{n}$ and $Y$ boundary conditions on $\Lambda^{1}_{K}(L)\times\partial\big(-\frac{M}{2},\frac{M}{2}\big)^{n}$, where $X,Y=D$ or $N$ refer to either Dirichlet or Neumann boundary conditions.

\begin{lem}\label{lemma-auxiliary}
Let $\eta<0$ and $L,K\geq1$. For a.e. $\omega\in\Omega$, there exist constants $\alpha>0$, $M_{0}>0$ and $C>0$ such that
\begin{equation}\label{eigenvalue-counting-fun-comparison}
N\Big(H_{\omega,S_{K}^{M}(L)}^{X,D},E\Big)\leq N\Big(H_{\omega,S_{K}^{1}(L)}^{X},E\Big)\leq N\Big(H_{\omega,S_{K}^{M}(L)}^{X,D},E+C(K+L-1)^{d_{1}}K^{d_{2}}e^{-\alpha M}\Big)
\end{equation}
for both $X=D$ and $X=N$, all $M\geq M_{0}$ and all $E\leq\eta$.
\end{lem}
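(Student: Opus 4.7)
The plan is to establish the two inequalities separately. The left one is a standard Dirichlet bracketing: any function in the form domain of $H_{\omega,S_K^M(L)}^{X,D}$ vanishes on $\Lambda_K^1(L)\times\partial(-M/2,M/2)^n$ and satisfies the $X$ boundary condition on $\partial\Lambda_K^1(L)\times(-M/2,M/2)^n$, so its zero extension to $S_K^1(L)$ lies in the form domain of $H_{\omega,S_K^1(L)}^X$ with the same Rayleigh quotient. The min-max principle then gives $E_j(H_{\omega,S_K^1(L)}^X)\leq E_j(H_{\omega,S_K^M(L)}^{X,D})$ for every $j\in\mathbb{N}_0$, which is the desired ordering of counting functions. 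This argument handles $X=D$ and $X=N$ uniformly.

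For the right inequality, set $n:=N(H_{\omega,S_K^1(L)}^X,E)$ and let $\psi_1,\ldots,\psi_n$ be an orthonormal system of eigenfunctions with eigenvalues $E_j\leq E\leq\eta<0$. Under $\rm(H3)$ and Weyl's essential spectrum theorem, the essential spectrum of $H_{\omega,S_K^1(L)}^X$ lies in $[0,\infty)$, so $\eta$ is separated from it by a gap of at least $|\eta|$. A Combes--Thomas argument therefore furnishes $\alpha>0$ and $C_1>0$, depending on $\eta$, $V_0$ and $V$ (through the deterministic sandwich $V\leq V_\omega\leq 0$) but not on $j$, such that
\[
\|e^{\alpha|y|}\psi_j\|_{L^2(S_K^1(L))}\leq C_1\quad\text{for}\ j=1,\ldots,n.
\]
Moreover, since $H_{\omega,S_K^1(L)}^X$ dominates the periodic comparison operator $-\Delta+V_0+V$ in the form sense, a standard Dirichlet--Neumann bracketing combined with a Weyl/Birman--Schwinger bound yields $n\leq C_2(K+L-1)^{d_1}K^{d_2}$ with $C_2$ depending only on $\eta$, $V_0$ and $V$.

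Next, pick a smooth cutoff $\chi_M\colon\mathbb{R}^n\to[0,1]$ supported in $(-M/2,M/2)^n$, equal to one on $(-(M-1)/2,(M-1)/2)^n$, with $\|\nabla\chi_M\|_\infty\leq 2$, and set $\tilde\psi_j:=\chi_M\psi_j$; these belong to the form domain of $H_{\omega,S_K^M(L)}^{X,D}$ because the cutoff acts only in $y$. The central calculation is the IMS-type identity
\[
\langle\chi_M\phi,H_\omega\chi_M\phi\rangle=\mathrm{Re}\langle H_\omega\phi,\chi_M^2\phi\rangle+\int|\nabla\chi_M|^2|\phi|^2,
\]
applied to $\phi=\sum_jc_j\psi_j$. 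The first term on the right expands as $\sum_jE_j|c_j|^2-\mathrm{Re}\sum_{j,k}E_j\bar c_jc_k\langle\psi_j,(1-\chi_M^2)\psi_k\rangle$; the first piece is bounded by $E\|\phi\|^2$, while the second piece together with the $\int|\nabla\chi_M|^2|\phi|^2$ contribution are each $O(ne^{-\alpha M}\|\phi\|^2)$ by Cauchy--Schwarz and the decay estimate. Simultaneously, the Gram matrix $G_{jk}=\langle\tilde\psi_j,\tilde\psi_k\rangle$ differs from the identity by $O(ne^{-\alpha M})$ in operator norm, so for $M\geq M_0$ large enough the family $\{\tilde\psi_j\}$ is linearly independent and $\|\chi_M\phi\|^2\geq\tfrac{1}{2}\|\phi\|^2$. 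Combining these estimates and using $E<0$ so that the denominator correction only improves the bound, one obtains $\langle\tilde\phi,H_\omega\tilde\phi\rangle\leq\bigl(E+C(K+L-1)^{d_1}K^{d_2}e^{-\alpha M}\bigr)\|\tilde\phi\|^2$ for every nonzero $\tilde\phi$ in the $n$-dimensional span; the right inequality then follows from the min-max principle.

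The main obstacle is securing the Combes--Thomas decay with rate $\alpha$ and prefactor $C_1$ uniform over $\omega$ and over eigenfunctions in the spectral window $(-\infty,\eta]$, since all downstream error bounds inherit this rate; this is where the deterministic sandwich $V\leq V_\omega\leq 0$ and assumption $\rm(H3)$ enter. Everything else---the IMS identity, the Gram matrix perturbation, the volume-linear counting bound, and the choice of $M_0$---is routine bookkeeping once these ingredients are in hand.
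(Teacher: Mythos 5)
Your overall architecture coincides with the paper's, which simply defers to \cite[Lemma 2.5, Lemma 2.9]{KW06} plus Theorem \ref{theorem-expo-decay-eigenfun}: the left inequality by zero extension in the $y$-direction and min--max, and the right inequality by cutting the eigenfunctions off at $|y|\sim M/2$, bounding the form and Gram-matrix errors by (number of eigenvalues)$\times e^{-\alpha M}$, bounding that number by a constant times $(K+L-1)^{d_1}K^{d_2}$, and applying min--max. The IMS identity, the Gram perturbation, the remark that $E<0$ makes the denominator harmless, and the choice of $M_0$ are all correct as you present them.

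The genuine gap is the decay input, which is precisely the part the paper does \emph{not} treat as routine. You assert that since $\sigma_{\mathrm{ess}}(H_{\omega,S_K^1(L)}^{X})\subset[0,\infty)$ there is a gap of size $|\eta|$ and ``a Combes--Thomas argument therefore furnishes'' $\|e^{\alpha|y|}\psi_j\|_2\le C_1$ with constants depending only on $\eta$, $V_0$, $V$. As stated this does not follow: $E$ is an eigenvalue, hence lies in the spectrum, so Combes--Thomas bounds on $(H-E)^{-1}$ are not directly available. What is actually needed is an Agmon/geometric-resolvent argument in the $y$-direction, and that requires knowing that the operator restricted to the far region $\{|y|\ge R_0\}$ of the strip lies above $\eta/2$, uniformly in $\omega$, $K$, $L$ and in the lateral boundary condition. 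The sandwich $V\le V_{\omega}\le 0$ together with $\mathrm{(H3)}$ reduces this to positivity of $-\Delta+V_0$ on the far part of the strip with lateral $X$ boundary conditions; for $X=D$ this is immediate from $\inf\sigma(-\Delta+V_0)=0$, but for $X=N$ it is not, because a lateral Neumann restriction can push the bottom of the spectrum strictly below $0$ (this is the very phenomenon that forces the Mezincescu boundary conditions later in the paper). Supplying uniform decay despite this is the whole content of Subsection \ref{subsec-decay-of-eigenfunction}: the paper proves Theorem \ref{theorem-expo-decay-eigenfun} by Feynman--Kac together with the $L^2\to L^\infty$ semigroup bounds of Lemma \ref{lemma-boundedness-semigroup}, obtaining $C(\eta)$, $\gamma(\eta)$ independent of $\omega$, $K$, $L$ and of $X$. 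Note also that this uniformity in $K$ is not optional for your constants $\alpha$, $C_1$: the lemma is subsequently applied with $M=(K+L-1)^{\rho_1}K^{\rho_2}$ and $K\to\infty$, so a rate or prefactor deteriorating with $K$ would ruin the later argument (the same remark applies, more mildly, to your asserted Weyl-type bound $n\le C_2(K+L-1)^{d_1}K^{d_2}$, which the paper takes from \cite[Lemma 2.9]{KW06}). So either reproduce a decay proof of the strength of Theorem \ref{theorem-expo-decay-eigenfun}, or give a complete Agmon-type argument that handles the lateral Neumann case; invoking Combes--Thomas from the gap to the essential spectrum alone does not close this step.
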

\begin{proof}
The lemma follows from Theorem \ref{theorem-expo-decay-eigenfun} proven in Subsection \ref{subsec-decay-of-eigenfunction} and \cite[Lemma 2.9]{KW06}. See \cite[Lemma 2.5]{KW06} for the arguments.
\end{proof}

The next lemma gives an alternative representation of $N_{1,L}^{X}(E)$ for $E<0$.

\begin{lem}\label{lemma-alternative-representation}
Suppose $L\geq1$. For all but possible countably many $E<0$ and any $\rho_{1},\rho_{2}>0$, there holds \begin{equation}\label{IDSS-alternating-representation}
\lim_{K\rightarrow\infty}\frac{N\Big(H_{\omega,S_{K}^{M}(L)}^{X,D},E\Big)}{|\Lambda^{1}_{K}(L)|}=N_{1,L}^{X}(E)
\end{equation}
for a.e. $\omega\in\Omega$ and both $X=D$ and $X=N$, where we set $M=(K+L-1)^{\rho_{1}}K^{\rho_{2}}$.
\end{lem}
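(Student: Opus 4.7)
The plan is to deduce \eqref{IDSS-alternating-representation} from the sandwiching inequality \eqref{eigenvalue-counting-fun-comparison} provided by Lemma \ref{lemma-auxiliary} together with the convergence statement in Theorem \ref{theorem-IDSS-cuboid}~$\rm(i)$. Set $M=(K+L-1)^{\rho_{1}}K^{\rho_{2}}$ and
$$\epsilon_{K}:=C(K+L-1)^{d_{1}}K^{d_{2}}e^{-\alpha M}.$$
Since $\rho_{1},\rho_{2}>0$ force $M\to\infty$ as $K\to\infty$, the exponential factor dominates the polynomial prefactor and $\epsilon_{K}\downarrow 0$. All arguments below take place on a single full-measure event on which the limit in Theorem \ref{theorem-IDSS-cuboid}~$\rm(i)$ holds simultaneously at a countable dense set of energies.

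For the upper bound I would divide the left inequality of \eqref{eigenvalue-counting-fun-comparison} by $|\Lambda^{1}_{K}(L)|$ and pass to $K\to\infty$, which directly gives $\limsup_{K\to\infty}\frac{N(H_{\omega,S_{K}^{M}(L)}^{X,D},E)}{|\Lambda^{1}_{K}(L)|}\leq N_{1,L}^{X}(E)$. For the matching lower bound I would fix an arbitrary $\delta>0$, apply the right inequality of \eqref{eigenvalue-counting-fun-comparison} at the shifted energy $E-\delta$, and combine it with the monotonicity of the energy argument of $N(H_{\omega,S_{K}^{M}(L)}^{X,D},\cdot)$; once $K$ is large enough that $\epsilon_{K}<\delta$ this yields
$$N(H_{\omega,S_{K}^{1}(L)}^{X},E-\delta)\leq N(H_{\omega,S_{K}^{M}(L)}^{X,D},E-\delta+\epsilon_{K})\leq N(H_{\omega,S_{K}^{M}(L)}^{X,D},E).$$
Normalizing by $|\Lambda^{1}_{K}(L)|$, taking $\liminf_{K\to\infty}$, and then letting $\delta\downarrow 0$ gives $N_{1,L}^{X}(E^{-})\leq\liminf_{K\to\infty}\frac{N(H_{\omega,S_{K}^{M}(L)}^{X,D},E)}{|\Lambda^{1}_{K}(L)|}$, where $N_{1,L}^{X}(E^{-})$ denotes the left limit.

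The main obstacle is reconciling the shifted-energy nature of the sandwich with a pointwise limit in $E$: one cannot interchange the shift $\epsilon_{K}\to 0$ with the limit $K\to\infty$ at a jump discontinuity of $N_{1,L}^{X}$. This is resolved by the observation that $E\mapsto N_{1,L}^{X}(E)$ is monotone non-decreasing on $(-\infty,0)$, hence has at most countably many discontinuities, and at every continuity point one has $N_{1,L}^{X}(E^{-})=N_{1,L}^{X}(E)$. This closes the gap between the $\limsup$ and $\liminf$ bounds and produces precisely the ``all but possibly countably many $E<0$'' caveat in the statement. The specific form $M=(K+L-1)^{\rho_{1}}K^{\rho_{2}}$ plays no role beyond ensuring $\epsilon_{K}\to 0$, which is exactly why the conclusion holds uniformly in $\rho_{1},\rho_{2}>0$.
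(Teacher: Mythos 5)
Your argument is correct and is essentially the paper's own proof: both sides of the sandwich in Lemma \ref{lemma-auxiliary} are divided by $|\Lambda_K^1(L)|$, the exponential error term is absorbed into a fixed shift $\delta$ (the paper's $\epsilon$) using monotonicity of the counting function, and the limits from Theorem \ref{theorem-IDSS-cuboid}~(i) are combined with the fact that the monotone function $N_{1,L}^X$ has only countably many discontinuities to let $\delta\downarrow 0$. The only cosmetic difference is that the paper organizes the exceptional set via a sequence $\eta_k\uparrow 0$ of energy cutoffs, while you argue pointwise at each continuity point, which yields the same conclusion.
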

\begin{proof}
Let $\eta<0$. We claim that \eqref{IDSS-alternating-representation} holds for all but possible countably many $E\leq\eta$. On one hand, Theorem \ref{theorem-IDSS-cuboid} $\rm(i)$ and the first inequality in \eqref{eigenvalue-counting-fun-comparison} give
\begin{equation*}
N_{1,L}^{X}(E)\geq\limsup_{H\rightarrow\infty}\frac{N\Big(H_{\omega,S_{K}^{M}(L)}^{X,D},E\Big)}{|\Lambda_{K}^{1}(L)|}\quad\text{for all}\,\,E\leq\eta.
\end{equation*}
On the other hand, for $E\leq\eta$ and any $\epsilon>0$
\begin{equation*}
\begin{split}
N_{1,L}^{X}(E-\epsilon)&=\lim_{K\rightarrow\infty}\frac{N\Big(H_{\omega,S_{K}^{1}(L)}^{X},E-\epsilon\Big)}{|\Lambda_{K}^{1}(L)|}\\
&\leq\liminf_{K\rightarrow\infty}\frac{N\Big(H_{\omega,S_{K}^{1}(L)}^{X},E-C(K+L-1)^{d_{1}}K^{d_{2}}e^{-\alpha
M}\Big)}{|\Lambda_{K}^{1}(L)|}\\
&\leq\liminf_{K\rightarrow\infty}\frac{N\Big(H_{\omega,S_{K}^{M}(L)}^{X,D},E\Big)}{|\Lambda_{K}^{1}(L)|},
\end{split}
\end{equation*}
where we used Theorem \ref{theorem-IDSS-cuboid} $\rm(i)$, the fact that $N\big(H_{\omega,S_{K}^{1}(L)}^{X},E-\epsilon\big)\leq N\big(H_{\omega,S_{K}^{1}(L)}^{X},E-C(K+L-1)^{d_{1}}K^{d_{2}}e^{-\alpha M}\big)$ for all large enough $K$ and the second inequality in \eqref{eigenvalue-counting-fun-comparison}. Since $N_{2,L}^{X}(E)$ is continuous at all but possible countably many $E<0$, by letting
$\epsilon\rightarrow0$, we have for all but possible countably many $E\leq\eta$
\begin{equation*}
N_{1,L}^{X}(E)\leq\liminf_{K\rightarrow\infty}\frac{N\Big(H_{\omega,S_{K}^{M}(L)}^{X,D},E\Big)}{|\Lambda_{K}^{1}(L)|}.
\end{equation*}

To finish the proof, we set $\eta_{0}=-\infty$ and pick a strictly increasing sequence $\{\eta_{k}\}_{k=1}^{\infty}\subset(-\infty,0)$
such that $\eta_{k}\rightarrow0$ as $k\rightarrow\infty$. Then the above argument says that for any $k\in\mathbb{N}$, \eqref{IDSS-alternating-representation} holds for all but possible countably many $E\in(\eta_{0},\eta_{k}]$. In particular, for any $k\in\mathbb{N}$, \eqref{IDSS-alternating-representation} holds for all but possible countably many $E\in(\eta_{k-1},\eta_{k}]$. The result of the lemma then follows from the obvious fact $(-\infty,0)=\cup_{k=1}^{\infty}(\eta_{k-1},\eta_{k}]$.
\end{proof}

We proceed to prove the statement $\rm(ii)$ in Theorem \ref{theorem-IDSS-cuboid}.

\begin{proof}[Proof of Theorem \ref{theorem-IDSS-cuboid} \rm(ii)]
For $\eta<0$, Laplace transform estimate (see \cite[Theorem 3.3]{KM82}) gives
\begin{equation}\label{laplace-tran-estimate}
0\leq\int_{-\infty}^{\eta}\bigg(N\Big(H_{\omega,S_{K}^{M}(L)}^{N,D},E\Big)-N\Big(H_{\omega,S_{K}^{M}(L)}^{D,D},E\Big)\bigg)dE\leq\frac{e^{t\eta}}{t}\bigg({\rm
Tr}\bigg[e^{-tH_{\omega,S_{K}^{M}(L)}^{N,D}}-e^{-tH_{\omega,S_{K}^{M}(L)}^{D,D}}\bigg]\bigg)
\end{equation}
for any $t>0$. By positivity of operators, the fact $V_{\omega}\geq V$ and the H\"{o}lder's inequality for trace ideals with conjugate exponents $p_{0}$ and $q_{0}$ ($1<p_{0},q_{0}<\infty$), we estimate
\begin{equation}\label{trace-estimate}
\begin{split}
&{\rm Tr}\bigg[e^{-H_{\omega,S_{K}^{M}(L)}^{N,D}}-e^{-H_{\omega,S_{K}^{M}(L)}^{D,D}}\bigg]\\
&\quad\quad\leq{\rm Tr}\bigg[\bigg(e^{\Delta_{S_{K}^{M}(L)}^{N,D}}-e^{\Delta_{S_{K}^{M}(L)}^{D,D}}\bigg)^{\frac{1}{p_{0}}}\bigg(e^{\Delta_{S_{K}^{M}(L)}^{N,D}}-e^{\Delta_{S_{K}^{M}(L)}^{D,D}}\bigg)^{\frac{1}{q_{0}}}e^{-V_{0}-V}\bigg]\\
&\quad\quad\leq\bigg({\rm Tr}\bigg[e^{\Delta_{S_{K}^{M}(L)}^{N,D}}-e^{\Delta_{S_{K}^{M}(L)}^{D,D}}\bigg]\bigg)^{\frac{1}{p_{0}}}\bigg({\rm Tr}\bigg[\bigg(e^{\Delta_{S_{K}^{M}(L)}^{N,D}}-e^{\Delta_{S_{K}^{M}(L)}^{D,D}}\bigg)e^{-q_{0}(V_{0}+V)}\bigg]\bigg)^{\frac{1}{q_{0}}}\\
&\quad\quad\leq\bigg({\rm Tr}\bigg[e^{\Delta_{S_{K}^{M}(L)}^{N,D}}-e^{\Delta_{S_{K}^{M}(L)}^{D,D}}\bigg]\bigg)^{\frac{1}{p_{0}}}\bigg({\rm Tr}\bigg[e^{\Delta_{S_{K}^{M}(L)}^{N,D}-q_{0}(V_{0}+V)}\bigg]\bigg)^{\frac{1}{q_{0}}},
\end{split}
\end{equation}
where $\Delta_{S_{K}^{M}(L)}^{X,Y}$ is the Laplacian $\Delta$ restricted to $L^{2}(S_{K}^{M}(L))$ with $X$ boundary conditions on $\partial\Lambda_{K}(L)\times\big(-\frac{M}{2},\frac{M}{2}\big)^{n}$ and $Y$ boundary conditions on $\Lambda_{K}(L)\times\partial\big(-\frac{M}{2},\frac{M}{2}\big)^{n}$. Set $U=V_{0}+V$ and $H(q_{0}U)=-\Delta+q_{0}U$ and denote by $H_{S_{K}^{M}(L)}^{X,Y}(q_{0}U)$ the corresponding localized operator with obvious meaning. By setting $t=1$ in \eqref{laplace-tran-estimate}, \eqref{trace-estimate} yields
\begin{equation}\label{sandwich-bound-uniqueness}
\begin{split}
0&\leq\frac{1}{|\Lambda_{K}^{1}(L)|}\int_{-\infty}^{\eta}\bigg(N\Big(H_{\omega,S_{K}^{M}(L)}^{N,D},E\Big)-N\Big(H_{\omega,S_{K}^{M}(L)}^{D,D},E\Big)\bigg)dE\\
&\quad\quad\leq\frac{e^{\eta}}{|\Lambda_{K}^{1}(L)|}\bigg({\rm Tr}\bigg[e^{\Delta_{S_{K}^{M}(L)}^{N,D}}-e^{\Delta_{S_{K}^{M}(L)}^{D,D}}\bigg]\bigg)^{\frac{1}{p_{0}}}\bigg({\rm Tr}\bigg[e^{-H_{S_{K}^{M}(L)}^{N,D}(q_{0}U)}\bigg]\bigg)^{\frac{1}{q_{0}}}.
\end{split}
\end{equation}

For the first trace in the last line of
\eqref{sandwich-bound-uniqueness}, we employ Lemma
\ref{lemma-app-difference-of-trace} and thus obtain
\begin{equation}\label{trace-estimate-1}
\begin{split}
&{\rm
Tr}\bigg[e^{\Delta_{S_{K}^{M}(L)}^{N,D}}-e^{\Delta_{S_{K}^{M}(L)}^{D,D}}\bigg]\\
&\quad\quad\leq\bigg[\bigg(1+\frac{K+L-1}{\sqrt{4\pi}}\bigg)^{d_{1}}\bigg(1+\frac{K}{\sqrt{4\pi}}\bigg)^{d_{2}}-\bigg(\frac{K+L-1}{\sqrt{4\pi}}-1\bigg)^{d_{1}}\bigg(\frac{K}{\sqrt{4\pi}}-1\bigg)^{d_{2}}\bigg]\times\bigg(\frac{M}{\sqrt{4\pi}}\bigg)^{n}\\
&\quad\quad\leq
C_{1}\Big((K+L-1)^{d_{1}-1}K^{d_{2}}+(K+L-1)^{d_{1}}K^{d_{2}-1}\Big)M^{n}
\end{split}
\end{equation}
for some $C_{1}=C_{1}(d_{1},d_{2},n)>0$.

For the second trace in the last line of \eqref{sandwich-bound-uniqueness}, we use the fact that $q_{0}U\in\mathcal{K}(\mathbb{R}^{d+n})$, which implies that $q_{0}U$ is infinitesimally form bounded with respect to $-\Delta_{S_{K}^{M}(L)}^{N,D}$. It then follows from min-max principle that for any $\epsilon>0$, there exists $C_{\epsilon}\geq0$ such that
\begin{equation}\label{estimate-on-eigenvalues}
E_{M,N}\Big(H_{S_{K}^{M}(L)}^{N,D}(q_{0}U)\Big)\geq(1-\epsilon)E_{M,N}\Big(-\Delta_{S_{K}^{M}(L)}^{N,D}\Big)-C_{\epsilon},\quad M\in\mathbb{N}_{0}^{d},\,\, N\in\mathbb{N}^{d},
\end{equation}
where $E_{M,N}\Big(-\Delta_{S_{K}^{M}(L)}^{N,D}\Big)$, $M\in\mathbb{N}_{0}^{d}$, $N\in\mathbb{N}^{d}$, given in \eqref{eigenvalue-N-D}, are eigenvalues of
$-\Delta_{S_{K}^{M}(L)}^{N,D}$, and $E_{M,N}\Big(H_{S_{K}^{M}(L)}^{N,D}(q_{0}U)\Big)$, $M\in\mathbb{N}_{0}^{d}$, $N\in\mathbb{N}^{d}$, are eigenvalues of
$H_{S_{K}^{M}(L)}^{N,D}(q_{0}U)$. By means of \eqref{estimate-on-eigenvalues} with fixed $\epsilon\in(0,1)$, \eqref{eigenvalue-N-D} and arguments as in the proof of Lemma \ref{lemma-app-difference-of-trace}, we deduce
\begin{equation}\label{trace-estimate-2}
\begin{split}
{\rm Tr}\bigg[e^{-H_{S_{K}^{M}(L)}^{N,D}(q_{0}U)}\bigg]&=\sum_{M\in\mathbb{N}_{0}^{d}}\sum_{N\in\mathbb{N}^{d}}{\rm
exp}\Big\{-E_{M,N}\Big(H_{S_{K}^{M}(L)}^{N,D}(q_{0}U)\Big)\Big\}\\
&\leq e^{C_{\epsilon}}\sum_{M\in\mathbb{N}_{0}^{d}}\sum_{N\in\mathbb{N}^{d}}{\rm exp}\Big\{(\epsilon-1)E_{M,N}\Big(-\Delta_{S_{K}^{M}(L)}^{N,D}\Big)\Big\}\\
&\leq e^{C_{\epsilon}}\bigg(1+\frac{K+L-1}{\sqrt{4\pi(1-\epsilon)}}\bigg)^{d_{1}}\bigg(1+\frac{K}{\sqrt{4\pi(1-\epsilon)}}\bigg)^{d_{2}}\bigg(\frac{M}{\sqrt{4\pi(1-\epsilon)}}\bigg)^{n}\\
&\leq C_{2}(K+L-1)^{d_{1}}K^{d_{2}}M^{n}
\end{split}
\end{equation}
for some $C_{2}=C_{2}(d_{1},d_{2},n)>0$.

By estimates \eqref{trace-estimate-1} and \eqref{trace-estimate-2}, and taking $M=(K+L-1)^{\rho_{1}}K^{\rho_{2}}$ with $\rho_{1}+\rho_{2}<\frac{1}{np_{0}}$, the term in the last line of \eqref{sandwich-bound-uniqueness} is bounded from above by
\begin{equation*}
\begin{split}
&\frac{e^{\eta}C_{1}^{\frac{1}{p_{0}}}C_{2}^{\frac{1}{q_{0}}}\Big((K+L-1)^{d_{1}-1}K^{d_{2}}+(K+L-1)^{d_{1}}K^{d_{2}-1}\Big)^{\frac{1}{p_{0}}}(K+L-1)^{\frac{d_{1}}{q_{0}}+\rho_{1}n}K^{\frac{d_{2}}{q_{0}}+\rho_{2}n}}{(K+L-1)^{d_{1}}K^{d_{2}}}\\
&\quad\quad\leq C_{3}K^{(\rho_{1}+\rho_{2})n-\frac{1}{p_{0}}}\rightarrow0\quad\text{as}\quad K\rightarrow\infty,
\end{split}
\end{equation*}
where $C_{3}=C_{3}(d_{1},d_{2},n,L)>0$. This is to say
\begin{equation*}
\lim_{K\rightarrow\infty}\frac{1}{|\Lambda_{K}^{1}(L)|}\int_{-\infty}^{\eta}\bigg(N\Big(H_{\omega,S_{K}^{M}(L)}^{N,D},E\Big)-N\Big(H_{\omega,S_{K}^{M}(L)}^{D,D},E\Big)\bigg)dE=0,
\end{equation*}
which, together with Lemma \ref{lemma-alternative-representation} and the fact that $N_{1,L}^{X}(E)$ is continuous at all but possible countably many $E<0$, implies that $N_{1,L}^{D}(E)=N_{1,L}^{N}(E)$ for all but possible countably many $E\leq\eta$.

The result is then a simple consequence of the arguments as in the last paragraph of the proof of Lemma \ref{lemma-alternative-representation}.
\end{proof}

Theorem \ref{theorem-IDSS-cuboid} says that $N_{1,L}(E)$, defined to be the common values of $N_{1,L}^{D}(E)$ and $N_{1,L}^{N}(E)$, is well-defined for all but possible countably many $E<0$. Moreover, Lemma \ref{lemma-alternative-representation} and Theorem \ref{theorem-IDSS-cuboid} $\rm(ii)$ say that for all but possible countably many $E<0$ and any $\rho_{1},\rho_{2}>0$, there holds
\begin{equation}\label{IDSS-alternating-representation-1}
\lim_{K\rightarrow\infty}\frac{N\Big(H_{\omega,S_{K}^{M}(L)}^{X,D},E\Big)}{|\Lambda_{K}(L)|}=N_{1,L}(E)
\end{equation}
for a.e. $\omega\in\Omega$ and both $X=D$ and $X=N$, where $M=(K+L-1)^{\rho_{1}}K^{\rho_{2}}$.

Theorem \ref{theorem-IDSS-cuboid} is only part of Theorem \ref{theorem-IDSS} $\rm(ii)$. We now prove the remaining part of Theorem \ref{theorem-IDSS} $\rm(ii)$ and Theorem \ref{theorem-IDSS} $\rm(iii)$.

\begin{thm}
There holds $N(E)=N_{k,L}(E)$ for $k=\{1,2\}$, all $L\geq1$ and all possible countably many $E<0$.
\end{thm}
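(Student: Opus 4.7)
The plan is to prove $N(E)=N_{1,L}(E)$ in detail; the case $k=2$ follows by the obvious symmetry between the $d_{1}$- and $d_{2}$-blocks of coordinates (with the cuboid $\Lambda^{2}_{K}(L)$ in place of $\Lambda^{1}_{K}(L)$). The central observation is the geometric sandwich
\begin{equation*}
\Lambda_{K}\subset\Lambda^{1}_{K}(L)\subset\Lambda_{K+L-1},
\end{equation*}
which yields the corresponding inclusion of strips $S_{K}\subset S^{1}_{K}(L)\subset S_{K+L-1}$. For fixed $L$ and $K\to\infty$, the volume ratios
\begin{equation*}
\frac{|\Lambda_{K}|}{|\Lambda^{1}_{K}(L)|}=\left(\frac{K}{K+L-1}\right)^{d_{1}}\longrightarrow 1,\qquad\frac{|\Lambda_{K+L-1}|}{|\Lambda^{1}_{K}(L)|}=\left(\frac{K+L-1}{K}\right)^{d_{2}}\longrightarrow 1
\end{equation*}
both tend to $1$, so the inner cube, the cuboid, and the outer cube all have asymptotically the same volume.

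First I would invoke Dirichlet monotonicity under domain inclusion. For open sets $A\subset B$, zero-extension embeds $H^{1}_{0}(A)$ into $H^{1}_{0}(B)$; the min-max principle then gives $E_{n}(H^{D}_{\omega,B})\leq E_{n}(H^{D}_{\omega,A})$ for every $n$, hence
\begin{equation*}
N\bigl(H^{D}_{\omega,S_{K}},E\bigr)\leq N\bigl(H^{D}_{\omega,S^{1}_{K}(L)},E\bigr)\leq N\bigl(H^{D}_{\omega,S_{K+L-1}},E\bigr)
\end{equation*}
for every $\omega\in\Omega$ and every $E<0$. Dividing through by $|\Lambda^{1}_{K}(L)|$ and sending $K\to\infty$, Theorem \ref{theorem-IDSS} (i) shows that both outer terms converge almost surely to $N^{D}(E)\cdot 1$. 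The sandwich theorem then gives
\begin{equation*}
\lim_{K\to\infty}\frac{N\bigl(H^{D}_{\omega,S^{1}_{K}(L)},E\bigr)}{|\Lambda^{1}_{K}(L)|}=N^{D}(E)
\end{equation*}
for every $E<0$ and a.s. $\omega$, and by Theorem \ref{theorem-IDSS-cuboid} (i) the left-hand side equals $N^{D}_{1,L}(E)$. Hence $N^{D}_{1,L}(E)=N^{D}(E)$ for all $E<0$.

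To conclude I would combine two uniqueness statements: Theorem \ref{theorem-IDSS} (i) gives $N^{D}(E)=N^{N}(E)=N(E)$ for all but countably many $E<0$, and Theorem \ref{theorem-IDSS-cuboid} (ii) gives $N^{D}_{1,L}(E)=N^{N}_{1,L}(E)=N_{1,L}(E)$ for all but countably many $E<0$. Taking the union of these two countable exceptional sets, we obtain $N(E)=N^{D}(E)=N^{D}_{1,L}(E)=N_{1,L}(E)$ for all but countably many $E<0$, as required. I do not anticipate any serious obstacle: the argument reduces to a volume-comparison sandwich via Dirichlet monotonicity, and the only care required is bookkeeping of the exceptional $E$-sets arising from the Dirichlet--Neumann identifications for cubes and for cuboids, which is a finite union of countable sets and hence countable.
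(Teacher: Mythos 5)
Your proposal is correct, but it takes a genuinely more direct route than the paper's. You sandwich the full (infinite) strip $S^{1}_{K}(L)$ between the infinite cube-strips $S_{K}$ and $S_{K+L-1}$, use Dirichlet monotonicity under domain inclusion, and invoke only Theorem \ref{theorem-IDSS}(i) and Theorem \ref{theorem-IDSS-cuboid}(i)--(ii); this yields the identity $N_{1,L}^{D}(E)=N^{D}(E)$ for \emph{every} $E<0$, after which the only exceptional sets come from the two Dirichlet--Neumann identifications. The paper instead sandwiches the $y$-truncated boxes, $S_{K}^{K^{\rho}}\subset S_{K}^{M}(L)\subset S_{K+L}^{(K+L)^{\rho}}$ with mixed $(D,D)$ boundary conditions (see \eqref{sandwich}), which forces it to use the alternative finite-box representation \eqref{IDSS-alternating-representation-1} of $N_{1,L}$ --- and hence Lemma \ref{lemma-auxiliary} and the partially exponential decay of eigenfunctions (Theorem \ref{theorem-expo-decay-eigenfun}) --- together with the corresponding truncated-strip representation of $N(E)$ quoted from \cite{KW06}. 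Your argument bypasses that truncation machinery entirely and even gives a slightly stronger intermediate statement (equality of the Dirichlet quantities at all $E<0$, not just at continuity points). Two minor remarks: the paper organizes its proof over compact $L$-intervals and a countable cover of $[1,\infty)$ with an eye toward an exceptional $E$-set controlled uniformly in $L$, whereas your exceptional set depends on $L$ through the countable set where $N_{1,L}$ is defined --- but since $N_{1,L}(E)$ is only defined off that set in the first place, nothing of substance is lost; and your use of Theorem \ref{theorem-IDSS}(i) along the shifted side lengths $K+L-1$ is harmless, as it is the same thermodynamic limit.
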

\begin{proof}
We focus on the case $k=1$. Pick any $L_{1}, L_{2}\in[1,\infty)$ with $L_{1}<L_{2}$. We claim that $N(E)=N_{1,L}(E)$ for all $L\in[L_{1},L_{2}]$ and all but possible countably many $E<0$. Let $\rho=\rho_{1}+\rho_{2}$ with $\rho_{1}$ and $\rho_{2}$ being the same as in the proof of Lemma \ref{lemma-alternative-representation} and set
\begin{equation*}
S_{K}^{K^{\rho}}=\Lambda_{K}\times\bigg(-\frac{K^{\rho}}{2},\frac{K^{\rho}}{2}\bigg)^{n}.
\end{equation*}
Denote by $H_{\omega,S_{K}^{K^{\rho}}}^{X,Y}$ the operator $H_{\omega}$ restricted to $L^{2}(S_{K}^{K^{\rho}})$ with $X$ boundary conditions on
$\partial\Lambda_{K}\times\big(-\frac{K^{\rho}}{2},\frac{K^{\rho}}{2}\big)^{n}$ and $Y$ boundary conditions on $\Lambda_{K}\times\partial\big(-\frac{K^{\rho}}{2},\frac{K^{\rho}}{2}\big)^{n}$. Since $S_{K}^{K^{\rho}}\subset S_{K}^{M}(L)\subset S_{K+L}^{(K+L)^{\rho}}$, where $M=(K+L-1)^{\rho_{1}}K^{\rho_{2}}$, Dirichlet-Neumann bracketing ensures that for $E<0$
\begin{equation*}
N\Big(H_{\omega,S_{K}^{H^{\rho}}}^{D,D},E\Big)\leq N\Big(H_{\omega,S_{K}^{M}(L)}^{D,D},E\Big)\leq N\Big(H_{\omega,S_{K+L}^{(K+L)^{\rho}}}^{D,D},E\Big),
\end{equation*}
which implies that
\begin{equation}\label{sandwich}
\frac{N\Big(H_{\omega,S_{K}^{K^{\rho}}}^{D,D},E\Big)}{|\Lambda_{K}|}\leq \frac{|\Lambda_{K}(L)|}{|\Lambda_{K}|}\frac{N\Big(H_{\omega,S_{K}^{M}(L)}^{D,D},E\Big)}{|\Lambda_{K}(L)|}\leq
\frac{|\Lambda_{K+L}|}{|\Lambda_{K}|}\frac{N\Big(H_{\omega,S_{K+L}^{(K+L)^{\rho}}}^{D,D},E\Big)}{|\Lambda_{K+L}|}.
\end{equation}
It was proven in \cite{KW06} that $\lim_{K\rightarrow\infty}\frac{1}{|\Lambda_{K}|}N\Big(H_{\omega,S_{K}^{K^{\rho}}}^{D,D},E\Big)=N(E)$
for all but possible countably many $E<0$, and the limit $\lim_{K\rightarrow\infty}\frac{1}{{|\Lambda_{K+L}|}}N\Big(H_{\omega,S_{K+L}^{(K+L)^{\rho}}}^{D,D},E\Big)=N(E)$
holds for all $L\in[L_{1},L_{2}]$ and all possible countably many $E<0$. Moreover, the limit $\lim_{K\rightarrow\infty}\frac{1}{|\Lambda_{K}(L)|}N\Big(H_{\omega,S_{K}^{M}(L)}^{D,D},E\Big)=N_{1,L}(E)$ holds for all but possible countably many $E<0$ if we take $X=D$ in \eqref{IDSS-alternating-representation-1}. Therefore, passing to the limit $K\rightarrow\infty$ in \eqref{sandwich}, the claim
follows.

The result of the theorem is obtained by picking countably many compact intervals covering $[1,\infty)$.
\end{proof}


\subsection{Partially Exponential Decay of Eigenfunctions}\label{subsec-decay-of-eigenfunction}

In the proof of Lemma \ref{lemma-auxiliary}, we employed Theorem \ref{theorem-expo-decay-eigenfun}, which is the purpose of this section.

Let $\Lambda_{L,K}=\big(-\frac{L}{2},\frac{L}{2}\big)^{d_{1}}\times\big(-\frac{K}{2},\frac{K}{2}\big)^{d_{2}}$ and set $S_{L,K}=\Lambda_{L,K}\times\mathbb{R}^{n}$ for $L,K\geq1$. The main result in this subsection is the following theorem about the exponential decay of eigenfunctions, corresponding to negative eigenvalues of $H_{\omega,S_{L,K}}^{X}$, in the $y$-direction.

\begin{thm}\label{theorem-expo-decay-eigenfun} Let $\eta<0$. Then, for a.e. $\omega\in\Omega$, there holds the following statement: there exist $C=C(\eta)>0$ and $\gamma=\gamma(\eta)>0$ such that for both $X=D$ and $N$, any $L,K\geq1$ and any $L^{2}(\Lambda_{L,K})$-normalized eigenfunction
$\psi_{E}$ of $H_{\omega,S_{L,K}}^{X}$ corresponding to an eigenvalue $E\leq\eta$, one has
\begin{equation*}
\sup_{x\in\Lambda_{L,K}}|\psi_{E}(x,y)|\leq Ce^{-\gamma|y|}
\end{equation*}
for large enough $|y|$.
\end{thm}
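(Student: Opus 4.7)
The plan is to establish exponential decay in $y$ via a two-step Agmon argument: first a weighted $L^{2}$ estimate using a quadratic-form identity, then a pointwise upgrade through local boundedness for Schr\"{o}dinger operators with Kato-class potentials. I expect the pointwise upgrade to be the main obstacle, since it must be uniform in $L,K$ and in the boundary condition $X$.

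The crucial deterministic input is the pointwise bound $V_{\omega}\geq V$, which follows from $\omega_{i}\geq\omega_{\min}$ and $f\geq 0$. Together with $\rm(H3)$ it yields, for any $\epsilon>0$, a radius $R=R(\epsilon)$ such that $V_{\omega}(x,y)\geq -\epsilon$ for all $\omega$, all $x$, and all $|y|>R$. I fix $\epsilon\in(0,|\eta|)$ and $\gamma>0$ with $\gamma^{2}+\epsilon<|\eta|$, choose a smooth $\phi:\mathbb{R}^{n}\to[0,\infty)$ with $\phi\equiv 0$ on $\{|y|\leq R\}$, $|\nabla\phi|\leq 1$, and $\phi(y)/|y|\to 1$ as $|y|\to\infty$, and set $\varphi=\gamma\phi$, truncated at height $M$ to justify the manipulations and later removed by monotone convergence. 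Because $\varphi$ depends only on $y$, multiplication by $e^{\varphi}$ preserves the form domain of $H_{\omega,S_{L,K}}^{X}$ for both $X=D$ and $X=N$, and testing $(H_{\omega}-E)\psi_{E}=0$ against $e^{2\varphi}\bar\psi_{E}$ yields the standard Agmon identity
\begin{equation*}
\int_{S_{L,K}}|\nabla(e^{\varphi}\psi_{E})|^{2}=\int_{S_{L,K}}\bigl(E+|\nabla\varphi|^{2}-V_{0}-V_{\omega}\bigr)e^{2\varphi}|\psi_{E}|^{2}.
\end{equation*}

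The bulk form $\int(|\nabla u|^{2}+V_{0}|u|^{2})$ is nonnegative on the $X$-form domain of $S_{L,K}$, which transfers from $\inf\sigma(-\Delta+V_{0})=0$ on $\mathbb{R}^{d+n}$ via extension-by-zero (for $X=D$) and via the standard reflection/Floquet argument exploiting the $\mathbb{Z}^{d}$-periodicity of $V_{0}$ (for $X=N$, as in \cite{KW06}). Consequently
\begin{equation*}
0\leq\int_{S_{L,K}}\bigl(E+|\nabla\varphi|^{2}-V_{\omega}\bigr)e^{2\varphi}|\psi_{E}|^{2}.
\end{equation*}
On $\{|y|>R\}$ the integrand is bounded above by $\eta+\gamma^{2}+\epsilon=:-\delta<0$; on $\{|y|\leq R\}$ the weight equals $1$ and the Kato-class form boundedness of $V_{\omega}$ (assumption $\rm(H2)(iii)$) together with $\|\psi_{E}\|_{2}=1$ give $\int_{\{|y|\leq R\}}|V_{\omega}||\psi_{E}|^{2}\leq C(\eta)$ uniformly in $L,K,M,\omega$. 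Rearranging and sending $M\to\infty$ yields
\begin{equation*}
\int_{S_{L,K}}e^{2\gamma\phi(y)}|\psi_{E}(x,y)|^{2}\,dx\,dy\leq C(\eta).
\end{equation*}

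For the pointwise upgrade I will invoke the local boundedness estimate for Schr\"{o}dinger operators with Kato-class potentials (see \cite{Si82}): there exists $C=C(\eta,V_{0},V)$, independent of $L,K$, such that every eigenfunction $\psi_{E}$ satisfies
\begin{equation*}
|\psi_{E}(z)|^{2}\leq C\int_{B_{1}(z)\cap S_{L,K}}|\psi_{E}|^{2},\quad z\in S_{L,K},
\end{equation*}
where $z$ near $\partial\Lambda_{L,K}\times\mathbb{R}^{n}$ is handled by zero-extension ($X=D$) or reflection across the Lipschitz boundary ($X=N$). For $|y|>R+2$ the ball $B_{1}(x,y)$ lies inside $\{|y'|\geq |y|-1>R+1\}$, where $\varphi(y')\geq\gamma(|y|-1)$, so
\begin{equation*}
\sup_{x\in\Lambda_{L,K}}|\psi_{E}(x,y)|^{2}\leq Ce^{-2\gamma(|y|-1)}\int_{B_{1}(x,y)}e^{2\varphi}|\psi_{E}|^{2}\leq C'e^{-2\gamma|y|},
\end{equation*}
after absorbing the $e^{2\gamma}$ prefactor (or infinitesimally shrinking $\gamma$). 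The a.e.-$\omega$ qualifier enters only through the almost sure essential self-adjointness of $H_{\omega}$ and the deterministic bound $V_{\omega}\geq V$.
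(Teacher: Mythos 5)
Your route is genuinely different from the paper's: you run a two-step Agmon argument (weighted $L^{2}$ estimate plus a local sup-bound for Kato-class Schr\"odinger subsolutions), whereas the paper writes $\psi_{E}=e^{-t(H_{\omega,S_{L,K}}^{X}-E)}\psi_{E}$, applies the Feynman--Kac formula with absorbing/reflecting Brownian motion, splits the Wiener integral according to whether the $y$-component of the path moves by more than $|y|/2$, controls the two pieces by the uniform $\|\cdot\|_{2,\infty}$ and $\|\cdot\|_{1,\infty}$ semigroup bounds of Lemma \ref{lemma-boundedness-semigroup} together with L\'evy's maximal inequality, and then optimizes in $t$. Your pointwise upgrade (zero extension for $X=D$, even reflection across the flat faces for $X=N$, with constants depending only on uniform local Kato norms and on $\eta$) is fine, as is the Dirichlet half of the Agmon step.

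The genuine gap is the Neumann case of your claim that $\int_{S_{L,K}}(|\nabla u|^{2}+V_{0}|u|^{2})\geq 0$ for all $u\in H^{1}(S_{L,K})$. The justification you offer does not work: $V_{0}$ is only assumed $\mathbb{Z}^{d}$-periodic (no reflection symmetry), and $L,K\geq 1$ are arbitrary reals, so even reflection across the faces of $\Lambda_{L,K}$ produces a reflected potential different from $V_{0}$, and Floquet theory says nothing about Neumann restrictions to finite boxes. Worse, the claim itself is false in general: lateral Neumann boundaries can create surface (Tamm-type) states below the bulk spectrum. For instance, take $V_{0}(x,y)=v(x)$ bounded, periodic, with $\inf\sigma(-\Delta_{x}+v)=0$ and periodic ground state $\psi_{0}$; testing the Neumann form on $\Lambda_{L}\subset\mathbb{R}^{d}$ with $\psi_{0}$ gives, after integration by parts, only a boundary term $\tfrac12\int_{\partial\Lambda_{L}}\partial_{\nu}(\psi_{0}^{2})\,d\sigma$, which is negative for suitable $L$; tensoring with $\mathbb{R}^{n}_{y}$ then yields $\inf\sigma(H_{S_{L,K}}^{N}(V_{0}))<0$ under (H1). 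Since $\eta<0$ may be arbitrarily close to $0$, you cannot absorb such a negative shift into $E+|\nabla\varphi|^{2}-V_{\omega}\leq\eta+\gamma^{2}+\epsilon$, so the weighted estimate, and hence the theorem for $X=N$, does not follow as written; you need a quantitative substitute, uniform in $L,K$, with negative part smaller than $|\eta|$ (or an argument organized so that only the strictly negative energy window $E\leq\eta$ enters, as the paper attempts via the $\|e^{-tH_{S_{L,K}}^{N}(V_{0})}\|_{2,\infty}$ bound of Lemma \ref{lemma-boundedness-semigroup}). Note that this is precisely the crux the paper isolates in that lemma (following \cite{KW06}), so your proposal has not bypassed it; it has replaced the paper's semigroup input by an unproved, and in this generality incorrect, form positivity statement.
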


To prove the above theorem, we first prove several lemmas. The first one gives an estimate related to the integral kernel of $e^{t\Delta_{S_{L,K}}^{N}}$.

\begin{lem}\label{lemma-heat-kernel-estimate}
Let $L,K\geq1$. The integral kernel $e^{t\Delta_{S_{L,K}}^{N}}(\cdot,\cdot)$ of $e^{t\Delta_{S_{L,K}}^{N}}$ satisfies
\begin{equation*}
\int_{S_{L,K}}\big|e^{t\Delta_{S_{L,K}}^{N}}(x,y,\bar{x},\bar{y})\big|^{2}d\bar{x}d\bar{y}\leq\bigg(\frac{2}{L}+\frac{1}{\sqrt{2\pi t}}\bigg)^{d_{1}}\bigg(\frac{2}{K}+\frac{1}{\sqrt{2\pi t}}\bigg)^{d_{2}}2^{-3n/2}(\pi t)^{-n/2}
\end{equation*}
for all $(x,y)\in S_{L,K}$ and all $t>0$.
\end{lem}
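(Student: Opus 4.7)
The plan is to exploit the product structure of $S_{L,K}=(-L/2,L/2)^{d_{1}}\times(-K/2,K/2)^{d_{2}}\times\mathbb{R}^{n}$. Since $-\Delta_{S_{L,K}}^{N}$ is the orthogonal direct sum of $d_{1}$ copies of the one-dimensional Neumann Laplacian on $(-L/2,L/2)$, $d_{2}$ copies of the one-dimensional Neumann Laplacian on $(-K/2,K/2)$, and $n$ copies of the free Laplacian on $\mathbb{R}$, its heat kernel factors as the tensor product of the corresponding one-dimensional kernels. Squaring and integrating over $(\bar{x},\bar{y})$ therefore factors as a product of $d_{1}+d_{2}+n$ one-dimensional $L^{2}$-integrals, so it suffices to establish the two one-variable estimates
\[\int_{-a/2}^{a/2}\bigl|e^{t\Delta^{N}}(u,\bar{u})\bigr|^{2}\,d\bar{u}\leq\frac{2}{a}+\frac{1}{\sqrt{2\pi t}}\qquad(a=L\text{ or }a=K)\]
and $\int_{\mathbb{R}}|e^{t\Delta}(v,\bar{v})|^{2}\,d\bar{v}=(8\pi t)^{-1/2}=2^{-3/2}(\pi t)^{-1/2}$, and then to multiply them together.

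The $\mathbb{R}$-factor is immediate from $e^{t\Delta}(v,\bar{v})=(4\pi t)^{-1/2}\exp(-(v-\bar{v})^{2}/(4t))$ and one Gaussian integration. For the Neumann interval factor I will use the eigenfunction expansion: the normalized Neumann eigenfunctions on $(-a/2,a/2)$ are $\phi_{0}\equiv a^{-1/2}$ and $\phi_{k}(u)=\sqrt{2/a}\,\cos(k\pi(u+a/2)/a)$ for $k\geq 1$, with eigenvalues $(k\pi/a)^{2}$. Writing the kernel as $\sum_{k}\phi_{k}(u)\phi_{k}(\bar{u})e^{-(k\pi/a)^{2}t}$, squaring, and using orthonormality converts the $L^{2}$-integral into the diagonal of the kernel at time $2t$:
\[\int_{-a/2}^{a/2}\bigl|e^{t\Delta^{N}}(u,\bar{u})\bigr|^{2}\,d\bar{u}=\sum_{k=0}^{\infty}|\phi_{k}(u)|^{2}e^{-2(k\pi/a)^{2}t}.\]
Applying the crude uniform bound $|\phi_{k}(u)|^{2}\leq 2/a$ (valid for every $k\geq 0$, since $|\phi_{0}|^{2}=1/a\leq 2/a$ and $|\phi_{k}|^{2}=(2/a)\cos^{2}\leq 2/a$) and then comparing the monotone series $\sum_{k\geq 1}e^{-2(k\pi/a)^{2}t}$ with the Gaussian integral $\int_{0}^{\infty}e^{-2(\pi x/a)^{2}t}\,dx=a/(2\sqrt{2\pi t})$ yields the claimed bound $2/a+1/\sqrt{2\pi t}$.

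Taking the product of $d_{1}$ copies of this bound with $a=L$, $d_{2}$ copies with $a=K$, and $n$ copies of $2^{-3/2}(\pi t)^{-1/2}$ reproduces the right-hand side of the lemma. The argument is essentially mechanical; the only point worth flagging is the deliberate choice of the \emph{uniform} bound $|\phi_{k}|^{2}\leq 2/a$ over the sharper $|\phi_{0}|^{2}=1/a$, which is precisely what is needed for the prefactor $2/a$ to absorb the $k=0$ contribution cleanly after summation and to produce the neat form $2/a + 1/\sqrt{2\pi t}$ stated in the lemma.
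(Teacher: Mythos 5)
Your proof is correct and follows essentially the same route as the paper: the paper also factors the kernel through the product structure (as $e^{t\Delta_{\Lambda_{L,K}}^{N}}\otimes e^{t\Delta_{\mathbb{R}^{n}}}$, with the cuboid factor treated in Appendix Lemma \ref{lemma-app-heat-kernel-estimate-1} by exactly your argument: eigenfunction expansion, the uniform bound on the cosine eigenfunctions, and comparison of the resulting series with a Gaussian integral), while the $\mathbb{R}^{n}$ part is an explicit Gaussian computation. Your further splitting into one-dimensional factors, and the minor terminological slip of calling the tensor-sum decomposition an ``orthogonal direct sum,'' are only presentational differences; the estimates and constants coincide with the paper's.
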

\begin{proof}
Note that $\Delta_{S_{L,K}}^{N}=\Delta_{\Lambda_{L,K}}^{N}\otimes I_{\mathbb{R}^{n}}+I_{\Lambda_{L,K}}\otimes\Delta_{\mathbb{R}^{n}}$, where $\Delta_{\Lambda_{L,K}}^{N}$ is the Neumann Laplacian on $L^{2}(\Lambda_{L,K})$, $I_{\mathbb{R}^{n}}$ is the identity operator on $L^{2}(\mathbb{R}^{n})$, $I_{\Lambda_{L,K}}$ is the identity operator on $L^{2}(\Lambda_{L,K})$ and $\Delta_{\mathbb{R}^{n}}$ is the Laplacian $\Delta$ on $L^{2}(\mathbb{R}^{n})$. It follows that $e^{t\Delta_{S_{L,K}}^{N}}=e^{t\Delta_{\Lambda_{L,K}}^{N}}\otimes e^{t\Delta_{\mathbb{R}^{n}}}$, which implies that
\begin{equation}\label{integral-kernel-identity}
e^{t\Delta_{S_{L,K}}^{N}}(x,y,\bar{x},\bar{y})=e^{t\Delta_{\Lambda_{L,K}}^{N}}(x,\bar{x})e^{t\Delta_{\mathbb{R}^{n}}}(y,\bar{y})\quad\text{for}\,\,
(x,y),\,\,(\bar{x},\bar{y})\in S_{L,K}.
\end{equation}
By \eqref{integral-kernel-identity}, Lemma \ref{lemma-app-heat-kernel-estimate-1} and the heat kernel $e^{t\Delta_{\mathbb{R}^{n}}}(y,\bar{y})=(4\pi
t)^{-n/2}e^{-|y-\bar{y}|^{2}/4t}$, $y,\bar{y}\in\mathbb{R}^{n}$, we estimate
\begin{equation*}
\begin{split}
&\int_{S_{L,K}}\big|e^{t\Delta_{S_{L,K}}^{N}}(x,y,\bar{x},\bar{y})\big|^{2}d\bar{x}d\bar{y}\\
&\quad\quad=\int_{\Lambda_{L,K}}\big|e^{t\Delta_{\Lambda_{L,K}}^{N}}(x,\bar{x})\big|^{2}d\bar{x}\int_{\mathbb{R}^{n}}\big|e^{t\Delta_{\mathbb{R}^{n}}}(y,\bar{y})\big|^{2}d\bar{y}\\
&\quad\quad\leq\bigg(\frac{2}{L}+\frac{1}{\sqrt{2\pi t}}\bigg)^{d_{1}}\bigg(\frac{2}{K}+\frac{1}{\sqrt{2\pi t}}\bigg)^{d_{2}}2^{-3n/2}(\pi t)^{-n/2}.
\end{split}
\end{equation*}
This completes the proof.
\end{proof}

The next lemma gives a general result of the boundedness of the semigroup generated by $-H_{S_{L,K}}^{X}(W)$, where $H_{S_{L,K}}^{X}(W)$ is the operator $H(W)=-\Delta+W$ restricted to $L^{S_{L,K}}$ with $X$ boundary condition on $\partial S_{L,K}$.

\begin{lem}\label{lemma-boundedness-semigroup}
Let $W:\mathbb{R}^{d+n}\rightarrow\mathbb{R}$ be such that the positive part $W_{+}\in\mathcal{K}_{\rm loc}(\mathbb{R}^{d+n})$ and the negative part $W_{-}\in\mathcal{K}(\mathbb{R}^{d+n})$. Let $H(W)=-\Delta+W$. Then there exists some $C>0$ such that
\begin{equation*}
\big\|{\rm exp}\big\{-tH_{S_{L,K}}^{X}(W)\big\}\big\|_{2,\infty}\leq Ce^{-t\inf\sigma(H(W))}
\end{equation*}
for all $L,K\geq1$, $t\geq1$ and both $X=D$ and $N$, where $\|\cdot\|_{2,\infty}$ is the operator norm of a bounded linear operator from $L^{2}$ to $L^{\infty}$.
\end{lem}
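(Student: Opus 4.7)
The plan is to factorize $e^{-tH^X_{S_{L,K}}(W)}$ into a short-time regularizing part and a long-time $L^2$-decay part. For $t\geq 1$, write
\[
e^{-tH^X_{S_{L,K}}(W)}=e^{-H^X_{S_{L,K}}(W)}\cdot e^{-(t-1)H^X_{S_{L,K}}(W)},
\]
and apply the operator-norm inequality $\|AB\|_{2,\infty}\leq\|A\|_{2,\infty}\|B\|_{2,2}$. The second factor equals $e^{-(t-1)\inf\sigma(H^X_{S_{L,K}}(W))}$ by the spectral theorem. What remains is (i) a uniform-in-$L,K$ bound for $\|e^{-H^X_{S_{L,K}}(W)}\|_{2,\infty}$ and (ii) a comparison between $\inf\sigma(H^X_{S_{L,K}}(W))$ and $\inf\sigma(H(W))$.

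For (i), I would start from Lemma \ref{lemma-heat-kernel-estimate} at $t=1/2$, which by Cauchy--Schwarz on the integral kernel yields $\|e^{(1/2)\Delta^N_{S_{L,K}}}\|_{2,\infty}\leq C$ with $C$ depending only on $d_1,d_2,n$ (the factors $\frac{2}{L}+\frac{1}{\sqrt{\pi}}$ and $\frac{2}{K}+\frac{1}{\sqrt{\pi}}$ are bounded uniformly for $L,K\geq 1$). The Kato-class potential $W$ is then incorporated via a Trotter product / Feynman--Kac argument, with Khasminskii's lemma, applicable since $W_-\in\mathcal{K}(\mathbb{R}^{d+n})$ and $W_+\in\mathcal{K}_{\rm loc}(\mathbb{R}^{d+n})$, absorbing the $W$-dependent contributions uniformly in $L,K$. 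For $X=D$, the standard Dirichlet--Neumann domination of positivity-preserving integral kernels, $0\leq e^{-sH^D_{S_{L,K}}(W)}(\cdot,\cdot)\leq e^{-sH^N_{S_{L,K}}(W)}(\cdot,\cdot)$, transfers the bound at no extra cost. For (ii), the Dirichlet case is immediate by min--max: every $\varphi\in H^1_0(S_{L,K})$ extended by zero to $\mathbb{R}^{d+n}$ lies in the form domain of $H(W)$ with the same Rayleigh quotient, so $\inf\sigma(H^D_{S_{L,K}}(W))\geq\inf\sigma(H(W))$, closing the argument for $X=D$.

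The main obstacle is the Neumann case of (ii): Neumann bracketing gives $\inf\sigma(H^N_{S_{L,K}}(W))\leq\inf\sigma(H(W))$, the wrong direction for the naive factorization above to deliver the rate $e^{-t\inf\sigma(H(W))}$. I would resolve this by replacing the asymmetric factorization with the symmetric one $e^{-tH^N(W)}=e^{-tH^N(W)/2}\cdot e^{-tH^N(W)/2}$ and bounding $\|e^{-tH^N(W)/2}\|_{2,\infty}$ directly through the on-diagonal identity $\|K^{N,W}_{t/2}(x,\cdot)\|_2^2=K^{N,W}_t(x,x)$ that results from self-adjointness and Chapman--Kolmogorov. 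The diagonal estimate $\sup_{x}K^{N,W}_t(x,x)\leq Ce^{-t\inf\sigma(H(W))}$ would then be obtained by the reflection principle, expressing the Neumann heat kernel on $S_{L,K}$ as a sum of images of the full-space kernel $K^{W}_t$ on $\mathbb{R}^{d+n}$, for which the Gaussian-type exponential bound at the correct rate $\inf\sigma(H(W))$ is a standard consequence of Kato-class semigroup estimates. The $\mathbb{Z}^d$-periodicity of $V_0$ in the $x$-direction and the decay hypothesis $\rm(H3)$ ensure that the image sum converges and contributes only a constant-in-$L,K$ multiplicative factor, which is absorbed into $C$.
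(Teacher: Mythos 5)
Your Dirichlet case and your short-time estimate are fine and essentially coincide with the paper's skeleton: factor $e^{-tH^{X}_{S_{L,K}}(W)}$ into a fixed-time $L^2\to L^\infty$ piece and a long-time $L^2\to L^2$ piece, bound the latter by $e^{-(t-\tau)\inf\sigma(H^{X}_{S_{L,K}}(W))}$, and for $X=D$ compare with $\inf\sigma(H(W))$ by extension by zero. The uniform-in-$(L,K)$ short-time $\|\cdot\|_{2,\infty}$ bound is exactly what Lemma \ref{lemma-heat-kernel-estimate} is designed for; the paper converts it into a Nash-type inequality via \cite[Cor. 2.4.7]{Da89} and perturbs by the form-bounded negative part of $W$, rather than using Feynman--Kac/Khasminskii, but either route is acceptable there.

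The genuine gap is in your Neumann argument. The method of images does not express the Neumann heat kernel of $-\Delta+W$ on $S_{L,K}$ as an image sum of the whole-space kernel of $-\Delta+W$: reflecting the Brownian paths unfolds the potential as well, so what appears in the image sum is the whole-space kernel of $-\Delta+\widetilde{W}$, where $\widetilde{W}$ is $W|_{S_{L,K}}$ extended by even reflections across the faces of $\Lambda_{L,K}$. No reflection symmetry of $W$ is assumed (nor holds for the $W$'s the lemma is applied to), so the best exponential rate this representation can yield is $\inf\sigma(-\Delta+\widetilde{W})$, which by folding/unfolding of Neumann trial functions is essentially $E_{0}(H^{N}_{S_{L,K}}(W))$ --- precisely the quantity you yourself observed may lie below $\inf\sigma(H(W))$ (a well hugging a face doubles under even reflection). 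So the detour is circular and does not produce the stated rate; a secondary issue is that for $t\gg L^{2},K^{2}$ the image sum contributes a factor growing like $(1+\sqrt{t}/L)^{d_{1}}(1+\sqrt{t}/K)^{d_{2}}$, not a constant. Note moreover that the asserted bound, uniform in $t\geq 1$, forces $E_{0}(H^{N}_{S_{L,K}}(W))\geq\inf\sigma(H(W))$ whenever the bottom of the Neumann spectrum is an eigenvalue (apply the semigroup to the normalized ground state), so no argument can sidestep that spectral inequality. This is in fact how the paper proceeds: it invokes $H^{N}_{S_{L,K}}(W)\geq\inf\sigma(H(W))$ outright (as a stated fact, following \cite{KW06}) and then runs the Nash-inequality/ultracontractivity argument for the short-time factor. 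Your instinct that this inequality is the crux of the Neumann case is well placed, but your reflection substitute does not establish it; to complete the proof along your lines you would have to prove that inequality for the relevant $W$, not replace it.
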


\begin{rem}\label{remark-boundedness-semigroup}
We will use Lemma \ref{lemma-boundedness-semigroup} in the cases that $W=V_{0}$ and $W=2(V_{0}+V)$. In the case $W=V_{0}$, assumption $\rm(H1)$ says $\inf\sigma(H(W))=0$. Under the assumption of Lemma \ref{lemma-boundedness-semigroup}, we have
\begin{equation*}
\big\|{\rm exp}\big\{-tH_{S_{L,K}}^{X}(W)\big\}\big\|_{1,\infty}\leq Ce^{-t\inf\sigma(H(W))}
\end{equation*}
for all $L,K\geq1$, $t\geq1$ and both $X=D$ and $N$, which is a simple consequence of the semigroup property and duality.
\end{rem}

\begin{proof}[Proof of Lemma \ref{lemma-boundedness-semigroup}]
The lemma in the case $X=D$ is well-known in the theory of Schr\"{o}dinger operators (see e.g \cite[Eq.(2.40)]{BHL00}). We prove the lemma in the case $X=N$.

By semigroup property and the fact that $\inf\sigma\big(H_{S_{L,K}}^{N}(W)\big)\geq\inf\sigma(H(W))$, we have for any fixed $\tau\in(0,1)$ and $t\geq1$
\begin{equation}\label{estimate-13}
\begin{split}
&\big\|{\rm exp}\big\{-tH_{S_{L,K}}^{N}(W)\big\}\big\|_{2,\infty}\\
&\quad\quad\leq\big\|{\rm exp}\big\{-(t-\tau)H_{S_{L,K}}^{N}(W)\big\}\big\|_{2,2}\big\|{\rm exp}\big\{-\tau H_{S_{L,K}}^{N}(W)\big\}\big\|_{2,\infty}\\
&\quad\quad\leq e^{-(t-\tau)\inf\sigma\big(H_{S_{L,K}}^{N}(W)\big)}\big\|{\rm exp}\big\{-\tau H_{S_{L,K}}^{N}(W)\big\}\big\|_{2,\infty}\\
&\quad\quad\leq e^{-(t-\tau)\inf\sigma(H(W))}\big\|{\rm exp}\big\{-\tau H_{S_{L,K}}^{N}(W)\big\}\big\|_{2,\infty}.
\end{split}
\end{equation}
To estimate the term $\big\|{\rm exp}\big\{-\tau H_{S_{L,K}}^{N}(W)\big\}\big\|_{2,\infty}$, we argue as follows. For any $\psi\in L^{2}(S_{L,K})$, H\"{o}lder's inequality and Lemma \ref{lemma-heat-kernel-estimate} yield
\begin{equation*}
\begin{split}
\big|\big(e^{s\Delta_{S_{L,K}}^{N}}\psi\big)(x,y)\big|\leq\int_{S_{L,K}}\big|e^{s\Delta_{S_{L,K}}^{N}}(x,y,\bar{x},\bar{y})\big|\cdot|\psi(\bar{x},\bar{y})|d\bar{x}d\bar{y}\leq
c_{1}s^{-(d+n)/4}\|\psi\|_{2},
\end{split}
\end{equation*}
for $s\in(0,1]$, where $c_{1}=c_{1}(d_{1},d_{2},n)>0$. Since the above estimate holds for all $(x,y)\in S_{L,K}$, we obtain
\begin{equation*}
\big\|e^{s\Delta_{S_{L,K}}^{N}}\psi\big\|_{\infty}\leq c_{1}s^{-(d+n)/4}\|\psi\|_{2},\quad s\in(0,1],\,\,\psi\in
L^{2}(S_{L,K}),
\end{equation*}
which, by \cite[Corollary 2.4.7]{Da89}, is equivalent to
\begin{equation}\label{estimate-9}
\|\psi\|_{2}^{2+4/(d+n)}\leq c_{2}\big(\|\nabla\psi\|_{2}^{2}+\|\psi\|_{2}^{2}\big)\|\psi\|_{1}^{4/(d+n)},\quad 0\leq\psi\in H^{1}(S_{L,K})\cap L^{1}(S_{L,K})
\end{equation}
for some $c_{2}=c_{2}(d_{1},d_{2},n)>0$. Since the negative part of $V_{0}+V_{\omega}$ is infinitesimally form bounded with respect to the Neumann Laplacian, for any $\epsilon>0$ there is a constant $C_{\epsilon}>0$ (independent of $L$ and $H$) such that
\begin{equation}\label{estimate-8}
\langle\psi,H_{S_{L,K}}^{N}(W)\psi\rangle\geq(1-\epsilon)\|\nabla\psi\|_{2}^{2}-C_{\epsilon}\|\psi\|_{2}^{2},\quad\psi\in H^{1}(S_{L,K}),
\end{equation}
where $\langle\cdot,H_{S_{L,K}}^{N}(W)\cdot\rangle$ should be understood as the quadratic form associated with $H_{S_{L,K}}^{N}(W)$. Fix some $\epsilon\in(0,1)$ in \eqref{estimate-8}. For any $\psi$ satisfying $0\leq\psi\in H^{1}(S_{L,K})\cap L^{1}(S_{L,K})$, we plug \eqref{estimate-8} into
\eqref{estimate-9} to find
\begin{equation*}
\begin{split}
\|\psi\|_{2}^{2+4/(d+n)}&\leq\frac{c_{2}}{1-\epsilon}\big(\langle\psi,H_{S_{L,K}}^{N}(W)\psi\rangle+(C_{\epsilon}+1)\|\psi\|_{2}^{2}\big)\|\psi\|_{1}^{4/(d+n)}\\
&\leq c_{3}\big(\langle\psi,\big(H_{S_{L,K}}^{N}(W)-\inf\sigma(H(W))\big)\psi\rangle+\|\psi\|_{2}^{2}\big)\|\psi\|_{1}^{4/(d+n)}
\end{split}
\end{equation*}
for some $c_{3}=c_{3}(d_{1},d_{2},n)>0$. By the fact that $H_{S_{L,K}}^{N}(W)-\inf\sigma(H(W))\geq0$ and \cite[Corollary 2.4.7]{Da89}, this is equivalent to
\begin{equation}\label{estimate-10}
\big\|{\rm exp}\big\{-s\big(H_{S_{L,K}}^{N}(W)-\inf\sigma(H(W))\big)\big\}\psi\big\|_{\infty}\leq c_{4}s^{-(d+n)/4}\|\psi\|_{2},\,\, s\in(0,1],\,\,\psi\in
L^{2}(S_{L,K})
\end{equation}
for some $c_{4}=c_{4}(d_{1},d_{2},n)>0$. Setting $s=\tau$ in \eqref{estimate-10}, we obtain
\begin{equation*}
\big\|{\rm exp}\big\{-\tau H_{S_{L,H}}^{N}(W)\big\}\big\|_{2,\infty}\leq c_{4}e^{-\tau\inf\sigma(H(W))}\tau^{-(d+n)/4},
\end{equation*}
which together with \eqref{estimate-13} implies that
\begin{equation*}
\big\|{\rm exp}\big\{-tH_{S_{L,K}}^{N}(W)\big\}\big\|_{2,\infty}\leq c_{4}\tau^{-(d+n)/4}e^{-t\inf\sigma(H(W))}.
\end{equation*}
Since $\tau\in(0,1)$ is fixed, the lemma follows.
\end{proof}

Finally, we prove Theorem \ref{theorem-expo-decay-eigenfun}.

\begin{proof}[Proof of Theorem \ref{theorem-expo-decay-eigenfun}]
We sketch the proof since it basically repeats the proof of \cite[Theorem 2.2]{KW06}. By the fact $\psi_{E}={\rm exp}\big\{-t(H_{\omega,S_{L,K}}^{X}-E)\big\}\psi_{E}$ and the Feynman-Kac formula, we find
\begin{equation}\label{estimate-11}
|\psi_{E}(x,y)|\leq\int{\rm exp}\bigg\{\int_{0}^{t}\big(E-V_{0}(b^{X}(s))-V_{\omega}(b^{X}(s))\big)ds\bigg\}|\psi_{E}(b^{X}(t))|d\mathbb{P}_{x,y}^{X}(b^{X}),
\end{equation}
where $b^{X}:[0,\infty)\rightarrow S_{L,H}$ is the Brownian path starting at $(x,y)\in S_{L,H}$ with absorbing boundary conditions in the case $X=D$ (see e.g. \cite{Si05}) or reflecting boundary conditions at $\partial S_{L,H}$ in the case $X=N$ (see e.g. \cite{BR97}), and $\mathbb{P}_{x,y}^{X}$ is the corresponding Wiener measure.

Since
$V_{\omega}(x,y)\geq V(x,y)\geq\inf_{x\in\mathbb{R}^{d}}V(x,y)\rightarrow0$ as $|y|\rightarrow\infty$ by assumption $\rm (H3)$, we have $E-V_{\omega}(x,y)\leq E-\inf_{x\in\mathbb{R}^{d}}V(x,y)\leq\frac{\eta}{2}$ for $|y|$ large enough. We write $b^{X}=(b_{1}^{X},b_{2}^{X})$ with $b_{1}^{X}:[0,\infty)\rightarrow\Lambda_{L,H}$ and $b_{2}^{X}:[0,\infty)\rightarrow\mathbb{R}^{n}$ and let $\Omega_{t}=\big\{b^{X}\big|\sup_{s\in[0,t]}|b_{2}^{X}(s)-y|<\frac{|y|}{2}\big\}$. Taking $|y|$ large so that $\sup_{s\in[0,t]}|b_{2}^{X}(s)|$ is large
enough if $b^{X}\in\Omega_{t}$ and splitting the Wiener integral in \eqref{estimate-11} into integrals over $\Omega_{t}$ and its complement $\Omega_{t}^{c}$, we obtain
\begin{equation}\label{estimate-12}
\begin{split}
|\psi_{E}(x,y)|\leq&e^{\eta t/2}\int_{\Omega_{t}}{\rm exp}\bigg\{-\int_{0}^{t}V_{0}(b^{X}(s))ds\bigg\}|\psi_{E}(b^{X}(t))|d\mathbb{P}_{x,y}^{X}(b^{X})\\
&+\int_{\Omega_{t}^{c}}{\rm exp}\bigg\{\int_{0}^{t}\big(E-U(b^{X}(s)))\big)ds\bigg\}|\psi_{E}(b^{X}(t))|d\mathbb{P}_{x,y}^{X}(b^{X}),
\end{split}
\end{equation}
where we used the fact $V_{\omega}\geq V$ and introduced the notation $U=V_{0}+V$.

Let $H(V_{0})=-\Delta+V_{0}$. The first term on the right hand side of \eqref{estimate-12} is bounded from above by
\begin{equation*}
\begin{split}
&e^{\eta t/2}\int{\rm exp}\bigg\{-\int_{0}^{t}V_{0}(b^{X}(s))ds\bigg\}|\psi_{E}(b^{X}(t))|d\mathbb{P}_{x,y}^{X}(b^{X})\\
&\quad\quad=e^{\eta t/2}\Big(e^{-tH_{S_{L,K}}^{X}(V_{0})}|\psi_{E}|\Big)(x,y)\leq e^{\eta t/2}\Big\|e^{-tH_{S_{L,K}}^{X}(V_{0})}\Big\|_{2,\infty}\|\psi_{E}\|_{2}\leq C_{1}e^{\eta t/2}
\end{split}
\end{equation*}
by Lemma \ref{lemma-boundedness-semigroup} and Remark \ref{remark-boundedness-semigroup}.

Let $H(2U)=-\Delta+2U$. The second term on the right hand side of \eqref{estimate-12} is bounded from above by
\begin{equation*}
\begin{split}
&\int_{\Omega_{t}^{c}}{\rm exp}\bigg\{-\int_{0}^{t}U(b^{X}(s))ds\bigg\}|\psi_{E}(b^{X}(t))|d\mathbb{P}_{x,y}^{X}(b^{X})\\
&\quad\quad\leq(\mathbb{P}_{x,y}^{X}(\Omega_{t}^{c}))^{1/2}\bigg(\int{\rm exp}\bigg\{-\int_{0}^{t}2U(b^{X}(s))ds\bigg\}|\psi_{E}(b^{X}(t))|^{2}d\mathbb{P}_{x,y}^{X}(b^{X})\bigg)^{\frac{1}{2}}\\
&\quad\quad=(\mathbb{P}_{x,y}^{X}(\Omega_{t}^{c}))^{1/2}\bigg(\Big(e^{-tH_{S_{L,K}}^{X}(2U)}|\psi_{E}|^{2}\Big)(x,y)\bigg)^{\frac{1}{2}}\\
&\quad\quad\leq(\mathbb{P}_{x,y}^{X}(\Omega_{t}^{c}))^{1/2}\Big\|e^{-tH_{S_{L,K}}^{X}(2U)}\Big\|_{1,\infty}^{\frac{1}{2}}\|\psi_{E}\|_{2}\leq C_{2}(\mathbb{P}_{x,y}^{X}(\Omega_{t}^{c}))^{1/2}e^{t|\inf\sigma(H(2U))|/2}
\end{split}
\end{equation*}
by Lemma \ref{lemma-boundedness-semigroup} and Remark \ref{remark-boundedness-semigroup}. For the term $\mathbb{P}_{x,y}^{X}(\Omega_{t}^{c})$, we have
$\mathbb{P}_{x,y}^{X}(\Omega_{t}^{c})\leq4e^{-|y|^{2}/32t}$ by Levy's maximal inequality (see e.g. \cite{Si05}).

Thus, we have shown
\begin{equation*}
|\psi_{E}(x,y)|\leq C_{1}e^{\eta t/2}+2C_{2}e^{-|y|^{2}/64t}e^{t|\inf\sigma(H(2U))|/2}
\end{equation*}
for $x\in\Lambda_{L,H}$, $y\in\mathbb{R}^{n}$ with $|y|$ being large enough and $t\geq1$. The result of the theorem is attained by setting $t=\frac{|y|}{8(|\inf\sigma(H(2U))|)^{1/2}}$ and $\gamma=\min\Big\{\frac{(|\inf\sigma(H(2U))|)^{1/2}}{16},-\frac{\eta}{16(|\inf\sigma(H(2U))|)^{1/2}}\Big\}$.
\end{proof}


\section{Spectral Gap Estimates}\label{sec-spectral-gap}

This section is devoted to the discussion of the crucial spectral gap estimates (Subsection \ref{subsec-Mezincescu-spectral-gap}) related to $H_{\rm per}$ defined in \eqref{periodic-operator}, and the sandwiching bound for the IDSS for negative energies (Subsection \ref{subsec-IDSS-neg-energy}). Throughout this section, assumptions $\rm(H1)$, $\rm(H2)$, $\rm(H3)$ and $\rm(H4)$ are assumed to be satisfied.

Let's begin with the ground state of $H_{\rm per}$. Let $\psi_{0}$ be the ground state of $H_{\rm per}$, i.e., $H_{\rm per}\psi_{0}=E_{0}\psi_{0}$. We remark that $\psi_{0}\notin L^{2}(\mathbb{R}^{d+n})$ and $\psi_{0}$ can be taken to be positive, $\mathbb{Z}^{d}$-periodic and continuously differentiable in a
neighborhood of $\partial S$, where $S=\Lambda\times\mathbb{R}^{n}$ with $\Lambda$ being any $d$-dimensional open cuboid. For later use, we assume that $\psi_{0}$ is $L^{2}(S_{1})$-normalized, i.e., $\int_{S_{1}}\psi_{0}(x,y)^{2}dxdy=1$, where $S_{1}=\Lambda_{1}\times\mathbb{R}^{n}$ with $\Lambda_{1}$ being the unit open cube in $\mathbb{R}^{d}$ centered at $0\in\mathbb{R}^{d}$. Another property of $\psi_{0}$ is stated in the following

\begin{lem}[\cite{KW06}]\label{lemma-ground-state}
Let $\bar{\psi}_{0}(y)=\int_{\Lambda_{1}}\psi_{0}(x,y)dx$, $y\in\mathbb{R}^{n}$. Then there are constants $C_{1},C_{2}>0$ such that
\begin{equation*}
C_{1}\bar{\psi}_{0}(y)\leq\psi_{0}(x,y)\leq C_{2}\bar{\psi}_{0}(y)\quad\text{for all}\,\,x\in\mathbb{R}^{d}\,\,\text{and}\,\,y\in\mathbb{R}^{n}.
\end{equation*}
\end{lem}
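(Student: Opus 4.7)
The plan is to exploit the fact that $\psi_0>0$ is a solution of the Schr\"odinger equation $(-\Delta+V_0+V-E_0)\psi_0=0$ on all of $\mathbb{R}^{d+n}$ and invoke the Harnack inequality for positive solutions of Schr\"odinger equations with Kato-class potentials (the Aizenman--Simon form; see for instance \cite{Si82}). Writing $W:=V_0+V-E_0$, assumption $\rm(H1)$ gives $V_0\in\mathcal{K}(\mathbb{R}^{d+n})$ and the discussion following $\rm(H3)$ gives $V\in L^p_{\rm unif,loc}(\mathbb{R}^{d+n})\subset\mathcal{K}(\mathbb{R}^{d+n})$ with $p>d+n$; in particular the Kato norm of $W$ on unit balls is bounded uniformly in the base point (for $V_0$ this follows from $\mathbb{Z}^d$-periodicity in $x$, and for $V$ it follows from the uniform local $L^p$ bound). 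Consequently the Harnack inequality yields a single constant $C_H\geq1$, independent of the center, such that for every $z_0\in\mathbb{R}^{d+n}$
\begin{equation*}
\sup_{B_{1/2}(z_0)}\psi_0\;\leq\;C_H\,\inf_{B_{1/2}(z_0)}\psi_0.
\end{equation*}

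Next I would chain this estimate along the $x$-direction only. For any $y\in\mathbb{R}^n$ and any two points $x_1,x_2\in\Lambda_1$, the segment $\{(tx_1+(1-t)x_2,y)\mid t\in[0,1]\}$ has length at most $\sqrt{d}$, so it can be covered by a number $N=N(d)$ of balls of radius $1/2$ whose centers form a chain. Iterating the uniform Harnack inequality along this chain produces
\begin{equation*}
\psi_0(x_1,y)\;\leq\;C_H^{N}\,\psi_0(x_2,y)\qquad\text{for all }x_1,x_2\in\Lambda_1,\ y\in\mathbb{R}^n.
\end{equation*}
By the $\mathbb{Z}^d$-periodicity of $\psi_0$ in $x$ this inequality in fact holds for all $x_1,x_2\in\mathbb{R}^d$ with the same constant.

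Finally I would integrate this two-sided bound over $x_2\in\Lambda_1$ (recalling $|\Lambda_1|=1$ and the $L^2(S_1)$-normalization implies positivity on a set of full measure). Integrating $\psi_0(x_1,y)\leq C_H^N\psi_0(x_2,y)$ in $x_2$ gives $\psi_0(x_1,y)\leq C_H^N\bar\psi_0(y)$, which is the upper bound with $C_2:=C_H^N$. Integrating the reverse inequality $\psi_0(x_2,y)\leq C_H^N\psi_0(x_1,y)$ in $x_2$ gives $\bar\psi_0(y)\leq C_H^N\psi_0(x_1,y)$, which is the lower bound with $C_1:=C_H^{-N}$. Since both constants are independent of $(x,y)$, this proves the claim.

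The only real obstacle is verifying that the Harnack constant can indeed be chosen uniformly in space; everything else is just a periodicity/chaining argument. This uniformity hinges on the uniform local Kato norm of $W$, which as noted above is ensured by the $\mathbb{Z}^d$-periodicity of $V_0$ in $x$ together with the $L^p_{\rm unif,loc}$ bound on $V$ provided by $\rm(H2)(iii)$ and $\rm(H3)$.
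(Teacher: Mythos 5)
Your Harnack-chaining argument is correct and is essentially the approach of the cited source: the paper gives no proof of this lemma but refers to \cite{KW06}, where the same pointwise-versus-cell-average comparison for the positive periodic ground state is obtained from the uniform Harnack inequality for positive solutions of Schr\"odinger equations with Kato-class potentials, chained across the unit cell and combined with the $\mathbb{Z}^{d}$-periodicity of $\psi_{0}$, exactly as you do. One cosmetic remark: the uniformity in the base point of the local Kato norm of $V_{0}+V-E_{0}$ (hence of the Harnack constant) is automatic from the definition of $\mathcal{K}(\mathbb{R}^{d+n})$, which already contains a supremum over centers, rather than a consequence of the periodicity of $V_{0}$; this does not affect the validity of your proof.
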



\subsection{Mezincescu Boundary Conditions and Spectral Gap Estimates}\label{subsec-Mezincescu-spectral-gap}

Recall that $S=\Lambda\times\mathbb{R}^{n}$ with $\Lambda$ being any $d$-dimensional open cuboid. We define
\begin{equation*}
\chi_{S}(x,y)=-\frac{1}{\psi_{0}(x,y)}(\overrightarrow{n}\cdot\nabla)\psi_{0}(x,y)\quad\text{for}\quad(x,y)\in\partial S,
\end{equation*}
where $\overrightarrow{n}$ is the outer normal vector of $\partial S$. Let $H_{{\rm per},S}^{\chi}$ be the restriction of $H_{\rm per}$ to $L^{2}(S)$ with Mezincescu boundary condition
\begin{equation*}
(\overrightarrow{n}\cdot\nabla)\psi=-\chi_{S}\psi\quad\text{on}\quad\partial S.
\end{equation*}
It is referred to \cite{Me87,Mi02} for more discussions about Mezincescu boundary condition.
\begin{lem}\label{H-per-property}
There hold
\begin{itemize}
\item[\rm(i)] $\inf\sigma(H_{{\rm per},S}^{\chi})=\inf\sigma(H_{\rm per})=E_{0}$;

\item[\rm(ii)] $\psi_{0}$ restricted to $S$, denoted by $\psi_{0}|_{S}$, continues to be a ground state of $H_{{\rm per},S}^{\chi}$, i.e., $H_{{\rm per},S}^{\chi}\psi_{0}|_{S}=E_{0}\psi_{0}|_{S}$. Moreover, $\psi_{0}|_{S}\in L^{2}(S)$.
\end{itemize}
\end{lem}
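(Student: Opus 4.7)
The plan is to exploit the fact that the Mezincescu boundary condition is crafted precisely so that the periodic ground state $\psi_0$ satisfies it. First, I would verify this by direct computation: by definition of $\chi_S$ one has $(\vec n\cdot\nabla)\psi_0 = -\chi_S\psi_0$ on $\partial S$, which is exactly the Mezincescu boundary condition. Combined with the pointwise identity $H_{\rm per}\psi_0 = E_0\psi_0$, the restriction $\psi_0|_S$ is a classical solution of the eigenvalue problem for $H_{{\rm per},S}^{\chi}$.

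Second, I would check that $\psi_0|_S \in L^2(S)$. Using Lemma \ref{lemma-ground-state}, $\psi_0(x,y) \leq C_2 \bar\psi_0(y)$, and Cauchy--Schwarz gives $\bar\psi_0(y)^2 \leq |\Lambda_1|\int_{\Lambda_1}\psi_0(x,y)^2\,dx$. Since $\psi_0$ is $L^2(S_1)$-normalized, Fubini yields $\int_{\mathbb{R}^n}\bar\psi_0(y)^2\,dy \leq 1$, so $\bar\psi_0 \in L^2(\mathbb{R}^n)$. Then $\int_S \psi_0^2\,dx\,dy \leq C_2^2 |\Lambda|\,\|\bar\psi_0\|_{L^2(\mathbb{R}^n)}^2 < \infty$ because $\Lambda$ is bounded in $\mathbb{R}^d$. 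Hence $\psi_0|_S$ is a genuine $L^2$-eigenfunction of $H_{{\rm per},S}^{\chi}$ with eigenvalue $E_0$, yielding $\inf\sigma(H_{{\rm per},S}^{\chi}) \leq E_0$ by the variational principle.

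For the reverse inequality I would use the ground state substitution (Doob transform). Writing $\psi = \psi_0\, g$ and computing formally,
\begin{equation*}
(H_{\rm per} - E_0)(\psi_0 g) = -\psi_0^{-1}\nabla\cdot\bigl(\psi_0^2\,\nabla g\bigr),
\end{equation*}
so that in the weighted space $L^2(S, \psi_0^2\,dx\,dy)$ the operator $H_{{\rm per},S}^{\chi} - E_0$ is unitarily equivalent to the non-negative weighted Laplacian $-\psi_0^{-2}\nabla\cdot(\psi_0^2\nabla)$. Crucially, the Mezincescu condition on $\psi$ translates into a Neumann condition on $g$: from $(\vec n\cdot\nabla)\psi = \psi_0(\vec n\cdot\nabla)g + g(\vec n\cdot\nabla)\psi_0$ and the boundary condition $(\vec n\cdot\nabla)\psi = \psi_0^{-1}(\vec n\cdot\nabla)\psi_0 \cdot \psi$, the positivity of $\psi_0$ forces $(\vec n\cdot\nabla)g = 0$ on $\partial S$. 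Consequently, integration by parts produces no boundary term, and the quadratic form identity
\begin{equation*}
\langle\psi,(H_{{\rm per},S}^{\chi} - E_0)\psi\rangle_{L^2(S)} = \int_S |\nabla g|^2\,\psi_0^2\,dx\,dy \geq 0
\end{equation*}
holds on the form domain, giving $H_{{\rm per},S}^{\chi} \geq E_0$ and establishing (i).

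Part (ii) is then immediate: $\psi_0|_S$ is a positive $L^2$-eigenfunction at the bottom of the spectrum, so it is the ground state. The main obstacle will be the rigorous justification of the ground state substitution, specifically identifying the quadratic form domain of $H_{{\rm per},S}^{\chi}$ (which is nonstandard because $\chi_S$ varies along $\partial S$) and verifying that $\psi = \psi_0 g$ maps it bijectively onto the Neumann form domain in the weighted space; the $\mathbb{Z}^d$-periodicity of $\psi_0$, its strict positivity, and its $C^1$ regularity near $\partial S$ guarantee that the substitution is a bounded, bounded-below transformation, and a density argument on smooth compactly supported $g$ closes the estimate.
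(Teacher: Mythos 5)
Your proposal is correct and follows essentially the paper's own route: part (ii) is handled exactly as you do (positivity of $\psi_{0}$, the eigenvalue equation, the fact that $\chi_{S}$ is built so that $\psi_{0}$ satisfies the Mezincescu boundary condition, and the $L^{2}(S)$ bound via Lemma \ref{lemma-ground-state}), while for part (i) the paper simply cites Mezincescu \cite{Me87}, whose content is precisely the ground-state (Doob) transformation and the resulting identity $\langle\psi,(H_{{\rm per},S}^{\chi}-E_{0})\psi\rangle=\int_{S}|\nabla g|^{2}\psi_{0}^{2}$ that you reproduce. The only caution concerns your closing density remark: the approximating functions $g$ must be smooth up to $\partial S$ with support compact only in the $y$-direction (functions vanishing near $\partial S$ form a core for the Dirichlet, not the Mezincescu, form), and the lower bound really only needs the identity on the operator domain, where the boundary condition is available — so this is an imprecision of wording rather than a gap.
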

\begin{proof}
For $\rm(i)$, we refer to \cite{Me87}. $\rm(ii)$ follows from $\rm(i)$ and the fact that $\psi_{0}|_{S_{L}}$ is positive and satisfies the eigenvalue equation and the boundary condition.
\end{proof}

To study Lifshitz tails in the quantum regime as well as in the classical regime, we need to consider special strips. More
precisely, we need $\Lambda_{L}=\big(-\frac{L}{2},\frac{L}{2}\big)^{d}$ and $S_{L}=\Lambda_{L}\times\mathbb{R}^{n}$ for $L\geq1$. The following result corresponding to the gap of the lowest two eigenvalues of $H_{{\rm per},S_{L}}^{\chi}$ plays a crucial role in the proof of the existence of Lifshitz tails.

\begin{lem}[\cite{KW06}]\label{spectral-gap-1}
There exists $C_{\rm per}>0$ such that
\begin{equation*}
E_{1}(H_{{\rm per},S_{L}}^{\chi})-E_{0}(H_{{\rm per},S_{L}}^{\chi})\geq\frac{C_{\rm per}}{L^{2}}
\end{equation*}
for large enough $L$.
\end{lem}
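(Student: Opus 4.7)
The plan is to use Mezincescu's ground-state transformation to identify the spectral gap of $H_{{\rm per},S_L}^{\chi}$ above $E_0$ with the Poincar\'{e} gap of a weighted Laplacian on $S_L$, and then to exploit the product structure of the strip via tensorization of Poincar\'{e} constants.

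For $u$ in the form domain of $H_{{\rm per},S_L}^{\chi}$, I would write $u = \psi_0 \phi$ with $\phi \in L^2(S_L, \psi_0^2\,dx\,dy)$. The key identity, obtained by integration by parts using $H_{\rm per}\psi_0 = E_0\psi_0$ together with the Mezincescu boundary condition $(\overrightarrow{n}\cdot\nabla)\psi_0 = -\chi_{S_L}\psi_0$ on $\partial S_L$ to cancel boundary contributions, is
\begin{equation*}
\langle u,(H_{{\rm per},S_L}^{\chi} - E_0)u\rangle \;=\; \int_{S_L}\psi_0(x,y)^{2}\,|\nabla\phi(x,y)|^{2}\,dx\,dy,
\end{equation*}
with $\phi$ obeying (weighted) Neumann conditions on $\partial S_L$. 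Lemma \ref{H-per-property} identifies $u = \psi_0|_{S_L}$ (equivalently $\phi\equiv 1$) as the normalized ground state, so by the min-max principle the gap $E_1 - E_0$ equals the Poincar\'{e} gap of the weighted space $L^2(S_L,\psi_0^2\,dx\,dy)$. The lemma is thereby reduced to showing this Poincar\'{e} gap is bounded below by $C_{\rm per}/L^2$.

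Next, Lemma \ref{lemma-ground-state} allows me to replace the coupled weight $\psi_0(x,y)^{2}$ by the product weight $\bar\psi_0(y)^{2}$ at the cost of $L$-independent multiplicative constants on each side of the weighted Poincar\'{e} inequality. It then suffices to bound the Poincar\'{e} gap of the product probability measure $\mu_L = \nu_x^L \otimes \nu_y$ on $\Lambda_L\times\mathbb{R}^{n}$, where $\nu_x^L$ is normalized Lebesgue on $\Lambda_L$ and $d\nu_y \propto \bar\psi_0(y)^{2}\,dy$. By the classical tensorization of Poincar\'{e} inequalities,
\begin{equation*}
\mathrm{gap}(\nu_x^L\otimes\nu_y)\;\geq\;\min\bigl\{\mathrm{gap}(\nu_x^L),\ \mathrm{gap}(\nu_y)\bigr\}.
\end{equation*}
The cube factor contributes $\mathrm{gap}(\nu_x^L) = c_d/L^2$ (the standard Neumann Poincar\'{e} constant on a cube of side $L$). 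For the transverse factor, I would produce an $L$-independent gap $c_y > 0$ by leveraging $\mathrm{(H3)}$ and $\mathrm{(H4)}$: since $\inf_x V(x,y)\to 0$ as $|y|\to\infty$ while $E_0<0$, Agmon-type estimates force exponential decay of $\psi_0$, and hence of $\bar\psi_0$, in $|y|$ at rate comparable to $\sqrt{|E_0|}$; for such rapidly decaying measures on $\mathbb{R}^{n}$ the Poincar\'{e} inequality is classical. Combining yields $\mathrm{gap}(\mu_L)\geq c_d/L^2$ for all large $L$, which is the desired bound.

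The principal technical obstacle is the $L$-independent Poincar\'{e} inequality for the transverse measure $\bar\psi_0(y)^{2}\,dy$ on $\mathbb{R}^{n}$: this is where the unboundedness of $S_L$ in the surface-normal direction must be absorbed without corrupting the $L^{-2}$ scaling, and it rests on quantitative decay of the ground state off the surface. A secondary point is a clean justification of the ground-state integration by parts under the Mezincescu condition, but that condition is precisely designed to annihilate the boundary terms, so this step follows Mezincescu's original calculation in \cite{Me87} with no essential modification.
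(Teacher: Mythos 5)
Your overall skeleton --- the Mezincescu ground-state transformation, replacement of the weight $\psi_{0}^{2}$ by the product weight $\bar{\psi}_{0}(y)^{2}$ via Lemma \ref{lemma-ground-state}, and decoupling of the cube direction (Neumann gap $\pi^{2}/L^{2}$) from the transverse direction --- is in the same spirit as the argument the paper relies on: the paper itself does not reprove this lemma but defers to \cite[Theorem 3.5, Lemma 3.6]{KW06}, and its proof of the cuboid analogue, Lemma \ref{spectral-gap}, settles the $x$-independent case by exactly this decoupling. The genuine gap is in your treatment of the transverse factor. You claim an $L$-independent Poincar\'{e} gap for the measure $\bar{\psi}_{0}(y)^{2}dy$ on the grounds that Agmon estimates give exponential decay of $\bar{\psi}_{0}$ and that ``for such rapidly decaying measures the Poincar\'{e} inequality is classical.'' That is not a valid principle: an exponential \emph{upper} bound on a density on $\mathbb{R}^{n}$ does not imply a spectral gap for the associated weighted Dirichlet form, and even two-sided exponential bounds with different rates do not suffice (already in one dimension the Muckenhoupt quantity $\sup_{r}\nu([r,\infty))\int_{0}^{r}\rho(s)^{-1}ds$ can diverge when the density hugs the lower envelope on long stretches, producing bottlenecks and zero gap). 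Agmon theory only controls the upper envelope of $\bar{\psi}_{0}$, so nothing in your argument excludes such a degenerate transverse profile, and the crucial constant $c_{y}>0$ is not established.

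The step is closed not by decay of the measure but by spectral structure, which is how \cite{KW06} and the proof of Lemma \ref{spectral-gap} proceed: by $\mathrm{(H3)}$ and Weyl's theorem the essential spectrum of the relevant transverse (or strip) operator lies in $[0,\infty)$, while the simple ground state energy is $E_{0}<0$ by $\mathrm{(H4)}$ and Lemma \ref{H-per-property}, so the transverse gap is a fixed positive constant --- precisely the statement $E_{1}(H_{\mathbb{R}^{n}})-E_{0}(H_{\mathbb{R}^{n}})\geq C_{1}$ invoked in Lemma \ref{spectral-gap}. Translating this into your weighted-measure language needs either two-sided pointwise bounds on $\bar{\psi}_{0}$ with the \emph{same} decay rate, or, better, undoing the ground-state transformation in the $y$-variable and arguing at the operator level; note also that the naive effective transverse potential $\Delta_{y}\bar{\psi}_{0}/\bar{\psi}_{0}=\bar{\psi}_{0}(y)^{-1}\int_{\Lambda_{1}}(V_{0}+V-E_{0})\psi_{0}\,dx$ need not approach $|E_{0}|$ pointwise, since $V_{0}$ is merely Kato class and periodic in $x$ with no decay in $y$, so a pointwise argument fails and some averaging or form comparison in the spirit of \cite{KW06} is unavoidable. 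Until the transverse gap is obtained along these lines, the proposal does not prove the lemma; the remaining steps (ground-state identity, weight comparison, tensorization, and the Neumann gap on the cube) are fine.
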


\begin{rem}
We remark that the fact $\Lambda_{L}$ is a cube is essential in the proof of Lemma \ref{spectral-gap-1} (see \cite[Theorem 3.5, Lemma 3.6]{KW06}), since
$E_{1}(-\Delta_{\Lambda_{L}}^{N})=\frac{\pi^{2}}{L^{2}}$ is used and plays an important role, where $-\Delta_{\Lambda_{L}}^{N}$ is the Neumann Laplacian on the cube $\Lambda_{L}$. Fortunately, similar result preserves if $\Lambda_{L}$ is a cuboid (see Appendix \ref{sec-neumann-laplacian}).
\end{rem}

For later use, we let $\psi_{L}$ be the positive, normalized ground state of $H_{{\rm per},S_{L}}^{\chi}$, that is, $\psi_{L}>0$ pointwise and $\psi_{L}\in L^{2}(S_{L})$ with $L^{2}(S_{L})$-norm $1$ and $H_{{\rm per},S_{L}}^{\chi}\psi_{L}=E_{0}\psi_{L}$. This is guaranteed by Lemma \ref{H-per-property}. Moreover, since $\psi_{0}$ is $L^{2}(S_{1})$-normalized, Lemma \ref{H-per-property} insures that
\begin{equation}\label{ground-states}
\psi_{L}=\frac{1}{L^{d/2}}\psi_{0}|_{S_{L}}.
\end{equation}

To study Lifshitz tails in the quantum-classical/classical-quantum regime, we need results analogous to Lemma \ref{spectral-gap-1}, but related to $H_{\rm per}$ restricted to other kinds of strips. To be more specific, for $d=d_{1}+d_{2}$ with $d_{1},d_{2}\in\mathbb{N}$ and $L\geq1$, we let
\begin{itemize}
\item[\rm(i)] $\Lambda_{L,1}=\big(-\frac{L}{2},\frac{L}{2}\big)^{d_{1}}\times\big(-\frac{1}{2},\frac{1}{2}\big)^{d_{2}}$,\quad$S_{L,1}=\Lambda_{L,1}\times\mathbb{R}^{n}$ (for the quantum-classical regime);

\item[\rm(ii)] $\Lambda_{1,L}=\big(-\frac{1}{2},\frac{1}{2}\big)^{d_{1}}\times\big(-\frac{L}{2},\frac{L}{2}\big)^{d_{2}}$,\quad$S_{1,L}=\Lambda_{1,L}\times\mathbb{R}^{n}$ (for the classical-quantum regime).
\end{itemize}
Thus, recalling the notations above Theorem \ref{theorem-IDSS}, there hold $\Lambda_{L,1}=\Lambda_{1}^{1}(L)$, $S_{L,1}=S_{1}^{1}(L)$, $\Lambda_{1,L}=\Lambda_{1}^{2}(L)$ and $S_{1,L}=S_{1}^{2}(L)$. We prove the following estimate of the spectral gap between the lowest two eigenvalues of $H_{{\rm per},S}^{\chi}$ for $S=S_{L,1}$ or $S_{1,L}$.

\begin{lem}\label{spectral-gap}
There exists a constant $C>0$ such that
\begin{equation*}
E_{1}(H_{{\rm per},S}^{\chi})-E_{0}(H_{{\rm per},S}^{\chi})\geq\frac{C}{L^{2}}
\end{equation*}
for large enough $L$, where $S=S_{L,1}$ or $S_{1,L}$.
\end{lem}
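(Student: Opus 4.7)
The natural strategy is to replay, with $\Lambda = \Lambda_{L,1}$ or $\Lambda = \Lambda_{1,L}$ in place of the cube $\Lambda_L$, the argument of Kirsch--Warzel that establishes Lemma \ref{spectral-gap-1}. Specifically, let $\psi_{S}$ denote the positive, $L^{2}(S)$-normalized ground state of $H^{\chi}_{{\rm per},S}$ guaranteed by Lemma \ref{H-per-property}, and use the ground state transformation $\psi = u\,\psi_{S}$. This identifies the spectral gap with the first nonzero eigenvalue of the quadratic form
\[
Q(u) \;=\; \int_{S} |\nabla u(x,y)|^{2}\,\psi_{S}(x,y)^{2}\,dx\,dy
\]
on the weighted space $L^{2}(S,\psi_{S}^{2}\,dx\,dy)$, i.e.
\[
E_{1}(H^{\chi}_{{\rm per},S}) - E_{0}(H^{\chi}_{{\rm per},S})
\;=\; \inf\bigl\{\,Q(u) \,:\, \|u\,\psi_{S}\|_{L^{2}(S)}=1,\; \langle u,1\rangle_{\psi_{S}^{2}}=0\,\bigr\}.
\]
So the task reduces to proving a weighted Poincar\'e inequality of the form $Q(u) \geq (C/L^{2})\,\|u\,\psi_{S}\|_{L^{2}(S)}^{2}$ for all such $u$.

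To implement this I would split any admissible $u$ as $u(x,y) = \bar{u}(y) + \tilde{u}(x,y)$, where $\bar{u}(y) = |\Lambda|^{-1}\int_{\Lambda} u(x,y)\,dx$, so that $\int_{\Lambda}\tilde{u}(x,y)\,dx = 0$ pointwise in $y$. Lemma \ref{lemma-ground-state} gives $\psi_{0}(x,y) \asymp \bar{\psi}_{0}(y)$ uniformly in $x$, so up to multiplicative constants independent of $L$, one can replace the weight $\psi_{S}(x,y)^{2}$ by the tensorized weight $c_{S}^{2}\,\bar{\psi}_{0}(y)^{2}$ both in the $L^{2}$ norm and (after an elementary bound) in the Dirichlet energy. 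For the ``fluctuating'' piece $\tilde{u}$, Fubini together with the Neumann Poincar\'e inequality on $\Lambda$,
\[
\int_{\Lambda}\tilde{u}(x,y)^{2}\,dx \;\leq\; \frac{1}{E_{1}(-\Delta^{N}_{\Lambda})}\int_{\Lambda}|\nabla_{x}\tilde{u}(x,y)|^{2}\,dx,
\]
immediately yields the desired $L^{-2}$ bound provided that $E_{1}(-\Delta^{N}_{\Lambda}) \geq C'/L^{2}$ for large $L$; for the mean $\bar{u}(y)$, the orthogonality condition $\langle u,1\rangle_{\psi_{S}^{2}} = 0$ together with the control of $\psi_{S}^{2}$ by the $y$-dependent weight $\bar{\psi}_{0}(y)^{2}$ reduces matters to the spectral gap of the one-dimensional-type reduced operator on the full $\mathbb{R}^{n}$, which is already bounded below by a positive constant (independent of $L$, coming from the transverse structure of $H_{\rm per}$) by the $n=0$ portion of the Kirsch--Warzel argument.

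The only genuinely new ingredient compared with \cite{KW06} is the Neumann eigenvalue estimate $E_{1}(-\Delta^{N}_{\Lambda}) \geq C'/L^{2}$ on cuboids of aspect $L\times 1$: for $\Lambda_{L,1}$ the Neumann spectrum is $\{\pi^{2}(|k|^{2}/L^{2} + |\ell|^{2}) : k\in\mathbb{N}_{0}^{d_{1}},\ \ell\in\mathbb{N}_{0}^{d_{2}}\}$, whose smallest nonzero value is $\pi^{2}/L^{2}$ for $L\geq 1$, and symmetrically for $\Lambda_{1,L}$; this is precisely what Appendix \ref{sec-neumann-laplacian} supplies, so I would simply invoke it at this point.

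The main technical obstacle is bookkeeping the constants through the weight comparison of Lemma \ref{lemma-ground-state}: one must check that the factors gained and lost when passing between $\psi_{S}^{2}$ and $c_{S}^{2}\bar{\psi}_{0}(y)^{2}$ in both $Q(u)$ and $\|u\psi_{S}\|^{2}$ are $L$-independent, and that the mean/fluctuation decomposition is compatible with the orthogonality constraint $\langle u,1\rangle_{\psi_{S}^{2}}=0$ (rather than the unweighted one); both are handled by writing $\psi_{S}^{2} = c_{S}^{2}\bar{\psi}_{0}(y)^{2} + r(x,y)$ with $|r|\leq C\,\bar{\psi}_{0}(y)^{2}$ and absorbing $r$-terms into lower-order remainders exactly as in \cite[Lemma 3.6]{KW06}. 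With these ingredients in place, the bound $E_{1}(H^{\chi}_{{\rm per},S}) - E_{0}(H^{\chi}_{{\rm per},S}) \geq C/L^{2}$ follows for both $S=S_{L,1}$ and $S=S_{1,L}$.
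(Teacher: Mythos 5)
Your plan is essentially the paper's own argument: the paper likewise reduces the general case to the Kirsch--Warzel spectral-gap machinery of \cite{KW06} ``with obvious changes,'' the sole new ingredient being the cuboid Neumann gap $E_{1}(-\Delta^{N}_{\Lambda_{L,1}})=\pi^{2}/L^{2}$ supplied by Appendix \ref{sec-neumann-laplacian}, which you correctly identify as the only point where the cube geometry of Lemma \ref{spectral-gap-1} must be replaced. The only cosmetic difference is that the paper first treats the separable case ($V_{0}+V$ independent of $x$), where the Mezincescu and Neumann conditions coincide and the eigenvalues add explicitly, and then defers the general case to \cite{KW06}, whereas you directly unpack the ground-state-transformation/weighted Poincar\'e argument that \cite{KW06} uses.
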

\begin{proof}
We prove the lemma in the case $S=S_{L,1}$. The lemma in the case $S=S_{1,L}$ can be proven in a similar way.

We first show that the lemma holds when $V_{0}+V$ is independent of $x$. If $V_{0}+V$ is independent of $x$, then the ground state of $H_{\rm per}$ is independent of $x$, and thus the Mezincescu and Neumann boundary conditions agree. It then follows that the eigenvalues of $H_{{\rm per},S_{L,1}}^{\chi}$ are given by the sum of the eigenvalues of the Neumann Laplacian $-\Delta_{\Lambda_{L,1}}^{N}$ on $L^{2}(\Lambda_{L,1})$ and the negative eigenvalues of $H_{\mathbb{R}^{n}}:=-\Delta+V_{0}+V$ on $L^{2}(\mathbb{R}^{n})$.

We claim that $E_{0}(H_{{\rm per},S_{L,1}}^{\chi})=E_{0}(H_{\mathbb{R}^{n}})$ and $E_{1}(H_{{\rm per},S_{L,1}}^{\chi})=E_{0}(H_{\mathbb{R}^{n}})+E_{1}(-\Delta_{\Lambda_{L,1}}^{N})$ for large enough $L$. This follows from the fact
$E_{1}(H_{\mathbb{R}^{n}})-E_{0}(H_{\mathbb{R}^{n}})\geq C_{1}$ for some $C_{1}>0$, $E_{0}(-\Delta_{\Lambda_{L,1}}^{N})=0$ and the fact
$E_{1}(-\Delta_{\Lambda_{L,1}}^{N})=\frac{\pi^{2}}{L^{2}}$ (see Appendix \ref{sec-neumann-laplacian}). Alternatively, we can use Kr\"{o}ger's result (see \cite{Kr92}) on upper bounds for Neumann eigenvalues, i.e.,
\begin{equation*}
E_{1}(-\Delta_{\Lambda_{L,1}}^{N})\leq\frac{d}{d+2}\frac{4\pi^{2}}{(C_{d}|\Lambda_{L,1}|)^{2/d}}\rightarrow0\quad\text{as}\quad L\rightarrow\infty,
\end{equation*}
where $C_{d}=\frac{\pi^{d/2}}{\Gamma(d/2+1)}$ and $|\Lambda_{L,1}|$ is the $d$-dimensional Lebesgue measure of $\Lambda_{L,1}$. Hence, we have
\begin{equation*}
E_{1}(H_{{\rm per},S_{L,1}}^{\chi})-E_{0}(H_{{\rm per},S_{L,1}}^{\chi})=E_{1}(-\Delta_{\Lambda_{L,1}}^{N})=\frac{\pi^{2}}{L^{2}},
\end{equation*}
that is, the lemma in the case that $V_{0}+V$ is independent of $x$ holds.

For the rest of the proof, we can employ the arguments in \cite{KW06} with obvious changes and thus we omit it here.
\end{proof}

For later use, we set
\begin{equation}\label{ground-states-1}
\psi_{L,1}=\frac{1}{L^{d_{1}/2}}\psi_{0}|_{S_{L,1}}
\end{equation}
and
\begin{equation*}
\psi_{1,L}=\frac{1}{L^{d_{2}/2}}\psi_{0}|_{S_{1,L}}.
\end{equation*}
They are the positive, normalized ground states of $H_{per}^{S_{L,1},\chi}$ and $H_{per}^{S_{1,L},\chi}$, respectively.


\subsection{Sandwiching Bound}\label{subsec-IDSS-neg-energy}

As byproducts of the proof of Theorem \ref{theorem-IDSS}, Akcoglu-Krengel ergodic theorem (see \cite{AK81} or \cite[Theorem 3.1]{KM82}) says
\begin{equation}\label{equality-1}
\begin{split}
\lim_{L\rightarrow\infty}\frac{N\big(H_{\omega,S_{L}}^{D},E\big)}{|\Lambda_{L}|}&=\sup_{L\geq1}\frac{\mathbb{E}\big\{N\big(H_{\bullet,S_{L}}^{D},E\big)\big\}}{|\Lambda_{L}|},\\
\lim_{K\rightarrow\infty}\frac{N\Big(H_{\omega,S_{K}^{k}(L)}^{D},E\Big)}{|\Lambda_{K}^{k}(L)|}&=\sup_{K\geq1}\frac{\mathbb{E}\Big\{N\Big(H_{\bullet,S_{K}^{k}(L)}^{D},E\Big)\Big\}}{|\Lambda_{K}^{k}(L)|},\quad
L\geq1,\,\,k\in\{1,2\}.
\end{split}
\end{equation}
where $\mathbb{E}$ is the expectation with respect to the probability measure $\mathbb{P}$.

One of two goals in this paper is to investigate the asymptotic behavior of $N(E)$ near $E_{0}$, the bottom of the almost sure spectrum of $H_{\omega}$, $\omega\in\Omega$. This starts with the following sandwiching bound.

\begin{lem}\label{sandwich-bound}
There holds
\begin{equation*}
\frac{1}{|\Lambda|}\mathbb{P}\Big\{\omega\in\Omega\Big|E_{0}(H_{\omega,S}^{D})\leq E\Big\}\leq N(E)\leq\frac{1}{|\Lambda|}N\big(H_{per,S}^{\chi},E\big)\mathbb{P}\Big\{\omega\in\Omega\Big|E_{0}(H_{\omega,S}^{\chi})\leq E\Big\}
\end{equation*}
for all $E<0$, where the pair $(\Lambda,S)$ is taken to be $(\Lambda_{L},S_{L})$, $(\Lambda_{L,1},S_{L,1})$ or $(\Lambda_{1,L},S_{1,L})$ for $L\geq1$.
\end{lem}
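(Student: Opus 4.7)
Both inequalities arise from a bracketing argument: take a large cuboid $\Lambda_{\rm large}$ decomposed into $M^{d}$ lattice-compatible translates $\{S^{(i)}\}_{i=1}^{M^{d}}$ of the chosen $S$, compare $H_{\omega,S_{\rm large}}^{D}$ with $\bigoplus_{i}H_{\omega,S^{(i)}}^{?}$ (with $?=D$ for the lower bound and $?=\chi$ for the upper bound), invoke the potential domination $H_{\rm per}\le H_{\omega}$ (which follows from $\omega_{\min}\le\omega_{i}$ and $f\ge 0$, hence $V\le V_{\omega}$), and pass to the thermodynamic limit via Theorem \ref{theorem-IDSS} and the Akcoglu-Krengel ergodic theorem. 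In each of the three admissible pairs $(\Lambda,S)$, the enlarged cuboid $\Lambda_{\rm large}$ can be chosen so that Theorem \ref{theorem-IDSS} gives $|\Lambda_{\rm large}|^{-1}\mathbb{E}[N(H_{\omega,S_{\rm large}}^{D},E)]\to N(E)$ for all but countably many $E<0$.

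\textbf{Lower bound.} Standard Dirichlet-Dirichlet bracketing and the trivial estimate $N(H,E)\ge\mathbf{1}\{E_{0}(H)\le E\}$ yield
\begin{equation*}
N(H_{\omega,S_{\rm large}}^{D},E)\ \ge\ \sum_{i=1}^{M^{d}}N(H_{\omega,S^{(i)}}^{D},E)\ \ge\ \sum_{i=1}^{M^{d}}\mathbf{1}\{E_{0}(H_{\omega,S^{(i)}}^{D})\le E\}.
\end{equation*}
Taking expectation, dividing by $|\Lambda_{\rm large}|=M^{d}|\Lambda|$, and letting $M\to\infty$, the LHS converges to $N(E)$ and, by $\mathbb{Z}^{d}$-stationarity, the averaged indicator converges to $|\Lambda|^{-1}\mathbb{P}\{E_{0}(H_{\omega,S}^{D})\le E\}$.

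\textbf{Upper bound via Mezincescu.} The main step is the inequality
\begin{equation*}
N(H_{\omega,S_{\rm large}}^{D},E)\ \le\ \sum_{i=1}^{M^{d}}N(H_{\omega,S^{(i)}}^{\chi},E).
\end{equation*}
For any $\psi\in H^{1}_{0}(S_{\rm large})\subset\bigoplus_{i}H^{1}(S^{(i)})$, I would evaluate the Mezincescu direct-sum quadratic form $\sum_{i}q_{\omega,S^{(i)}}^{\chi}[\psi|_{S^{(i)}}]$: the bulk integrals reassemble into $\int_{S_{\rm large}}(|\nabla\psi|^{2}+(V_{0}+V_{\omega})|\psi|^{2})$; the outer boundary term on $\partial S_{\rm large}$ vanishes since $\psi|_{\partial S_{\rm large}}=0$; and on each interior face shared by two translates $S^{(i)},S^{(j)}$ the Mezincescu coefficients cancel, $\chi_{S^{(i)}}+\chi_{S^{(j)}}=0$, because the outward normals are opposite and $\chi=-(\vec{n}\cdot\nabla\psi_{0})/\psi_{0}$ is built from the $\mathbb{Z}^{d}$-periodic $\psi_{0}$. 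Hence the direct-sum form coincides with $q_{\omega,S_{\rm large}}^{D}$ on $H^{1}_{0}(S_{\rm large})$ yet is defined on the strictly larger domain $\bigoplus_{i}H^{1}(S^{(i)})$, so the min-max principle delivers the counting inequality.

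\textbf{Completing the upper bound.} Using $H_{\rm per}\le H_{\omega}$ and $\mathbb{Z}^{d}$-periodicity of $H_{\rm per}$, one has $N(H_{\omega,S^{(i)}}^{\chi},E)\le N(H_{\rm per,S^{(i)}}^{\chi},E)=N(H_{\rm per,S}^{\chi},E)$; moreover this count is zero whenever $E_{0}(H_{\omega,S^{(i)}}^{\chi})>E$. Combining these,
\begin{equation*}
\sum_{i=1}^{M^{d}}N(H_{\omega,S^{(i)}}^{\chi},E)\ \le\ N(H_{\rm per,S}^{\chi},E)\sum_{i=1}^{M^{d}}\mathbf{1}\{E_{0}(H_{\omega,S^{(i)}}^{\chi})\le E\}.
\end{equation*}
Taking expectation, dividing by $|\Lambda_{\rm large}|$, and sending $M\to\infty$ finishes the proof. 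The main subtlety is the Mezincescu sign-cancellation on interior faces, which relies on the three admissible cuboids being lattice-aligned so that $\chi$ inherits the $\mathbb{Z}^{d}$-periodicity of $\psi_{0}$; integer tilings suffice for the core estimate and the general case $L\ge 1$ is obtained by a straightforward approximation once the bound is established for lattice-compatible parameters.
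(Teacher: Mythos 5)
Your proposal is correct and is essentially the proof the paper has in mind: the paper disposes of the first inequality by invoking the superadditive-ergodic-theorem identity \eqref{equality-1} (which is exactly your Dirichlet bracketing plus the bound $N(H,E)\ge\mathbf{1}\{E_{0}(H)\le E\}$ and stationarity), and for the second inequality it simply cites Mezincescu \cite{Me87}, whose argument is precisely your $\chi$-cancellation on interior faces, the domination $V_{\omega}\ge V$ giving $N(H_{\omega,S}^{\chi},E)\le N(H_{{\rm per},S}^{\chi},E)\mathbf{1}\{E_{0}(H_{\omega,S}^{\chi})\le E\}$, and averaging. The points you gloss (boundedness of $\chi$ on the unbounded faces, mean convergence of the normalized counting functions, and lattice-compatible versus general $L\ge1$) are at the same level of detail the paper itself leaves to \eqref{equality-1} and \cite{Me87}.
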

\begin{proof}
The first inequality follows from \eqref{equality-1}. See \cite{Me87} for the second one.
\end{proof}

Note that the sandwiching bound in Lemma \ref{sandwich-bound} involves the term $N\big(H_{{\rm per},S}^{\chi},E\big)$ for $E<0$, the eigenvalue counting function of $H_{{\rm per},S}^{\chi}$ for negative energies. It is well-defined and the corresponding IDS for $H_{\rm per}$ has the so-called van-Hove singularity (see e.g \cite{KS87}) since $H_{\rm per}$ describes an ordered system.


\section{Lifshitz Tails}\label{proof-of-Lifshitz-tails}

In this section, we prove Theorem \ref{main-theorem}. Assumptions $\rm(H1)$, $\rm(H2)$, $\rm(H3)$, $\rm(H4)$, $\rm(H5)$ and $\rm(H6)$ are always assumed to be satisfied. Our proof is based on a combination of ideas used in \cite{KW05} and \cite{KW06}.

By the definition of $H_{\rm per}$ (see \eqref{periodic-operator}), we can rewrite $H_{\omega}$ as
\begin{equation*}
H_{\omega}=H_{\rm per}+W_{\omega},
\end{equation*}
where
\begin{equation*}
W_{\omega}(x,y)=\sum_{i\in\mathbb{Z}^{d}}(\omega_{i}-\omega_{\min})f(x-i,y),\quad x\in\mathbb{R}^{d},\,\,y\in\mathbb{R}^{n}.
\end{equation*}
We note that $W_{\omega}$ is nonnegative since $\omega_{\min}=\inf\text{supp}\mathbb{P}_{0}$.

To fix the terminology, we give the following definition related to Lifshitz tails.
\begin{defn}
If the limit $\lim_{E\rightarrow E_{0}}\frac{\ln|\ln N(E)|}{\ln(E-E_{0})}$ exists and satisfies
\begin{equation*}
\lim_{E\downarrow E_{0}}\frac{\ln|\ln N(E)|}{\ln(E-E_{0})}=-\eta
\end{equation*}
for some $\eta\in\mathbb{R}$, then we call $\eta$ the Lifshitz exponent.
\end{defn}
Hence, the proof of our main results can be understood to derive an expression for the Lifshitz exponent, which can be done by estimating an upper bound as well as a lower bound, and is given in the following subsections.


\subsection{Lower Bound}

In this subsection, we prove an upper bound on the Lifshitz exponent such that a lower bound on the limit $\lim_{E\downarrow E_{0}}\frac{\ln|\ln N(E)|}{\ln(E-E_{0})}$, if exists, is obtained. To do so, we first estimate an upper bound on the lowest Dirichlet eigenvalue.

\begin{lem}\label{lemma-lower-bound}
There are constants $c_{1},c_{2}>0$ such that the ground state energy, $E_{0}(H_{\omega,S_{L}}^{D})$, of $H_{\omega,S_{L}}^{D}$ satisfies
\begin{equation*}
E_{0}(H_{\omega,S_{L}}^{D})\leq E_{0}+\frac{c_{1}}{L^{2}}+\frac{c_{2}}{L^{d}}\int_{\Lambda_{L}}\widehat{W}_{\omega}(x)dx
\end{equation*}
for all $\omega\in\Omega$ and $L\geq1$, where
\begin{equation*}
\widehat{W}_{\omega}(x)=\sum_{i\in\mathbb{Z}^{d}}\frac{\omega_{i}-\omega_{\min}}{(1+|x_{1}-i_{1}|)^{\alpha_{1}}+(1+|x_{2}-i_{2}|)^{\alpha_{2}}},\quad
x\in\mathbb{R}^{d},\,\,\omega\in\Omega.
\end{equation*}
\end{lem}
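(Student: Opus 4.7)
The strategy is a variational upper bound via the min-max principle, using as trial function the Mezincescu ground state $\psi_L=L^{-d/2}\psi_0|_{S_L}$ multiplied by a smooth spatial cutoff that forces Dirichlet boundary conditions. Fix once and for all $\phi_0\in C_0^\infty(\Lambda_1)$ with $0\le\phi_0\le 1$ and $\phi_0\equiv 1$ on $\Lambda_{1/2}$, and set $\phi_L(x):=\phi_0(x/L)$; then $\phi_L\in C_0^\infty(\Lambda_L)$, $\phi_L\equiv 1$ on $\Lambda_{L/2}$, and $\|\nabla\phi_L\|_\infty\le C_0/L$. Put $\Phi_L(x,y):=\phi_L(x)\psi_L(x,y)$; because $\phi_L$ vanishes on $\partial\Lambda_L$, $\Phi_L$ vanishes on $\partial S_L$ and hence lies in the form domain of $H_{\omega,S_L}^D$.

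The key identity is the standard ground-state transformation. Green's identity applied using $H_{\rm per}\psi_L=E_0\psi_L$ (with Mezincescu condition on $\partial S_L$) together with $\Phi_L|_{\partial S_L}=0$ kills every boundary contribution and yields
\begin{equation*}
\langle\Phi_L,(H_{\rm per}-E_0)\Phi_L\rangle=\int_{S_L}\psi_L^2\,|\nabla\phi_L|^2\,dx\,dy.
\end{equation*}
Writing $H_\omega=H_{\rm per}+W_\omega$ and invoking min-max gives
\begin{equation*}
E_0(H_{\omega,S_L}^D)\le E_0+\frac{\int_{S_L}\psi_L^2|\nabla\phi_L|^2\,dx\,dy+\int_{S_L}W_\omega\phi_L^2\psi_L^2\,dx\,dy}{\|\Phi_L\|^2}.
\end{equation*}

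Three elementary estimates finish the proof. First, by $\mathbb{Z}^d$-periodicity of $\psi_0$ in $x$ and its $L^2(S_1)$-normalization, $\|\Phi_L\|^2\ge\int_{\Lambda_{L/2}\times\mathbb{R}^n}\psi_L^2\,dx\,dy\ge c_0>0$ uniformly in $L$ (the integral equals $L^{-d}$ times the measure of roughly $(L/2)^d$ unit cells' worth of $\psi_0^2$). Second, $|\nabla\phi_L|^2\le C_0^2/L^2$ and $\|\psi_L\|_2=1$ yield $\int\psi_L^2|\nabla\phi_L|^2\le C_0^2/L^2$, which is the source of the $c_1/L^2$ term. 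Third, the uniform-in-$y$ upper bound in (H5) gives the pointwise estimate $W_\omega(x,y)\le f_0\widehat{W}_\omega(x)$, while Lemma \ref{lemma-ground-state} combined with Cauchy--Schwarz and the normalization of $\psi_0$ forces $\int_{\mathbb{R}^n}\psi_0(x,y)^2\,dy\le M$ uniformly in $x$; together
\begin{equation*}
\int_{S_L}W_\omega\phi_L^2\psi_L^2\,dx\,dy\le\frac{f_0 M}{L^d}\int_{\Lambda_L}\widehat{W}_\omega(x)\,dx,
\end{equation*}
providing the second term after dividing by $c_0$.

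The only point requiring care is justifying the ground-state transformation at the operator level rather than just formally: one must verify that the Mezincescu condition on $\psi_L$ and the Dirichlet vanishing of $\Phi_L$ on $\partial S_L$ together annihilate all boundary terms in Green's identity. This is immediate once $\Phi_L|_{\partial S_L}=0$ is used and the continuous differentiability of $\psi_0$ up to $\partial S$ (recalled at the start of Section \ref{sec-spectral-gap}) is invoked; everything else reduces to a direct expansion of $|\nabla(\phi_L\psi_L)|^2$ followed by one integration by parts.
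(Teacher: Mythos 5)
Your proposal is correct and follows essentially the same route as the paper: the trial function $\theta_L\psi_L$ (cutoff times the Mezincescu ground state) in the min--max principle, the ground-state/integration-by-parts identity producing the $\|\psi_L\nabla\theta_L\|^2$ term, a uniform lower bound on the normalization, and the reduction of the $W_\omega$ term to $\widehat W_\omega$ via (H5) and Lemma \ref{lemma-ground-state}. The only differences are cosmetic (you bound $\int_{\mathbb{R}^n}\psi_0(x,y)^2\,dy$ via the upper bound of Lemma \ref{lemma-ground-state} and Cauchy--Schwarz, while the paper uses its lower bound), so no further comment is needed.
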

\begin{proof}
Let $\theta\in\mathcal{C}_{0}^{\infty}(\Lambda_{1})$ with $0\leq\theta(x)\leq1$ for all $x\in\Lambda_{1}$ and $\theta\equiv1$ on $\Lambda_{\frac{1}{2}}$, and define $\theta_{L}(x)=\theta(\frac{x}{L})$ for $x\in\Lambda_{L}$. Recall that $\psi_{L}$ is defined in \eqref{ground-states}. Using $\theta_{L}\psi_{L}$ (which is in the domain of $H_{\omega,S_{L}}^{D}$) as the variational function in the Rayleigh-Ritz principle or min-max principle, integration by parts and the eigenvalue equation $H_{{\rm per},S_{L}}^{\chi}\psi_{L}=E_{0}\psi_{L}$, we obtain
\begin{equation}\label{estimate-1}
\begin{split}
E_{0}(H_{\omega,S_{L}}^{D})\leq\frac{\big\langle\theta_{L}\psi_{L},H_{\omega,S_{L}}^{D}(\theta_{L}\psi_{L})\big\rangle}{\|\theta_{L}\psi_{L}\|^{2}}
=E_{0}+\frac{\|(\nabla\theta_{L})\psi_{L}\|^{2}}{\|\theta_{L}\psi_{L}\|^{2}}+\frac{\langle\theta_{L}\psi_{L},W_{\omega}\theta_{L}\psi_{L}\rangle}{\|\theta_{L}\psi_{L}\|^{2}}.
\end{split}
\end{equation}

Since $\theta\equiv1$ on $\Lambda_{\frac{L}{2}}$ and $\psi_{0}$ is $S_{1}$-normalized, we estimate $\|\theta_{L}\psi_{L}\|^{2}\geq2^{-d}$. For the term
$\|(\nabla\theta_{L})\psi_{L}\|^{2}$, direct calculation shows
\begin{equation*}
\begin{split}
\|(\nabla\theta_{L})\psi_{L}\|^{2}=\frac{1}{L^{d+2}}\int_{S_{L}}\Big|\nabla\theta\Big(\frac{x}{L}\Big)\Big|^{2}\psi_{0}(x,y)^{2}dxdy\leq\frac{C_{1}}{L^{2}}\int_{S_{1}}\psi_{0}(Lx,y)^{2}dxdy=\frac{C_{1}}{L^{2}}
\end{split}
\end{equation*}
for some $C_{1}>0$, where the inequality follows from the change of variable and the uniform boundedness of $\nabla\theta$, and the second equality is because of the change of variable and the $\mathbb{Z}^{d}$-periodicity of $\psi_{0}$. Therefore, \eqref{estimate-1} implies that
\begin{equation}\label{estimate-2}
E_{0}(H_{\omega,S_{L}}^{D})\leq E_{0}+\frac{C_{2}}{L^{2}}+\frac{C_{3}}{L^{d}}\int_{S_{L}}W_{\omega}(x,y)\psi_{0}(x,y)^{2}dxdy
\end{equation}
for some $C_{2},C_{3}>0$. For the integral on the right-hand side of \eqref{estimate-2}, we claim that
\begin{equation}\label{estimate-3}
\int_{S_{L}}W_{\omega}(x,y)\psi_{0}(x,y)^{2}dxdy\leq C_{4}\int_{\Lambda_{L}}\widehat{W}_{\omega}(x)dx
\end{equation}
for some $C_{4}>0$. In fact, assumption $\rm(H5)$ and Lemma \ref{lemma-ground-state} imply that $\int_{S_{L}}W_{\omega}(x,y)\psi_{0}(x,y)^{2}dxdy\leq
C_{5}\int_{\Lambda_{L}}\widehat{W}_{\omega}(x)dx\int_{\mathbb{R}^{n}}\bar{\psi}_{0}(y)^{2}dy$. The convergence of the second integral, i.e., $\int_{\mathbb{R}^{n}}\bar{\psi}_{0}(y)^{2}dy$, follows from Lemma
\ref{lemma-ground-state}. More precisely,
\begin{equation*}
1=\int_{\mathbb{R}^{n}}\int_{\Lambda_{1}}\psi_{0}(x,y)^{2}dxdy\geq C_{6}\int_{\mathbb{R}^{n}}\int_{\Lambda_{1}}\bar{\psi}_{0}(y)^{2}dxdy=C_{6}\int_{\mathbb{R}^{n}}\bar{\psi}_{0}(y)^{2}dy.
\end{equation*}

The lemma then follows from \eqref{estimate-2} and
\eqref{estimate-3}.
\end{proof}

The main result is this subsection is as follows.

\begin{thm}
The Lifshitz exponent is bounded from above by $\max\big\{\frac{d_{1}}{2},\frac{\gamma_{1}}{1-\gamma}\big\}+\max\big\{\frac{d_{2}}{2},\frac{\gamma_{2}}{1-\gamma}\big\}$,
i.e.,
\begin{equation*}
\liminf_{E\downarrow E_{0}}\frac{\ln|\ln N(E)|}{\ln(E-E_{0})}\geq-\max\bigg\{\frac{d_{1}}{2},\frac{\gamma_{1}}{1-\gamma}\bigg\}-\max\bigg\{\frac{d_{2}}{2},\frac{\gamma_{2}}{1-\gamma}\bigg\}.
\end{equation*}
\end{thm}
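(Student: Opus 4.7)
The plan is the standard Lifshitz-tail lower-bound argument. I would start from Lemma \ref{sandwich-bound} specialized to the cube $(\Lambda_L,S_L)$,
\begin{equation*}
N(E)\ \geq\ L^{-d}\,\mathbb{P}\bigl\{\omega\in\Omega:E_{0}(H^{D}_{\omega,S_{L}})\leq E\bigr\},
\end{equation*}
and combine it with the trial-function bound of Lemma \ref{lemma-lower-bound}, which implies that the event $\{E_{0}(H^{D}_{\omega,S_{L}})\leq E\}$ contains the joint event (i)~$c_{1}L^{-2}\leq(E-E_{0})/2$ and (ii)~$c_{2}L^{-d}\int_{\Lambda_{L}}\widehat{W}_{\omega}(x)\,dx\leq(E-E_{0})/2$. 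Condition (i) forces $L\sim(E-E_{0})^{-1/2}$, the kinetic/quantum scale.

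To arrange (ii) I would introduce an anisotropic trap and its associated low-noise event. For parameters $\rho_{1},\rho_{2}\geq L$ and $\epsilon>0$ to be optimized, set
\begin{equation*}
\mathcal{T}:=\bigl((-\rho_{1}/2,\rho_{1}/2)^{d_{1}}\times(-\rho_{2}/2,\rho_{2}/2)^{d_{2}}\bigr)\cap\mathbb{Z}^{d},\quad\Omega_{\epsilon}:=\bigl\{\omega:\omega_{i}-\omega_{\min}\leq\epsilon\,\,\forall i\in\mathcal{T}\bigr\}.
\end{equation*}
On $\Omega_{\epsilon}$ I split $\widehat{W}_{\omega}=\widehat{W}^{\mathcal{T}}_{\omega}+\widehat{W}^{\mathcal{T}^{c}}_{\omega}$. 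The trap part is bounded by $C_{1}\epsilon$ uniformly in $x$ using the convergence of $\sum_{i}((1+|x_{1}-i_{1}|)^{\alpha_{1}}+(1+|x_{2}-i_{2}|)^{\alpha_{2}})^{-1}$ (finite because $\gamma<1$, as is guaranteed by $\rm(H2)(iii)$ together with $\rm(H5)$). For the tail part I use $\omega_{i}-\omega_{\min}\leq{\rm diam}\,{\rm supp}\,\mathbb{P}_{0}$ together with the partial-sum asymptotic $\sum_{i_{3-k}\in\mathbb{Z}^{d_{3-k}}}((1+|x_{1}-i_{1}|)^{\alpha_{1}}+(1+|x_{2}-i_{2}|)^{\alpha_{2}})^{-1}\asymp(1+|x_{k}-i_{k}|)^{-\alpha_{k}(1-\gamma_{3-k})}$ and the identity $\alpha_{k}(1-\gamma_{3-k})-d_{k}=d_{k}(1-\gamma)/\gamma_{k}$, to obtain uniformly in $x\in\Lambda_{L}$
\begin{equation*}
\widehat{W}^{\mathcal{T}^{c}}_{\omega}(x)\ \lesssim\ \rho_{1}^{-d_{1}(1-\gamma)/\gamma_{1}}+\rho_{2}^{-d_{2}(1-\gamma)/\gamma_{2}}.
\end{equation*}
Assumption $\rm(H6)$ then gives $\mathbb{P}(\Omega_{\epsilon})\geq(C\epsilon^{N})^{|\mathcal{T}|}$ with $|\mathcal{T}|\asymp\rho_{1}^{d_{1}}\rho_{2}^{d_{2}}$.

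The optimization is the heart of the matter. Taking $\epsilon\sim E-E_{0}$ absorbs the trap part into the right-hand side of (ii), while the tail part is absorbed as soon as $\rho_{k}^{d_{k}}\gtrsim(E-E_{0})^{-\gamma_{k}/(1-\gamma)}$; combined with the kinetic lower bound $\rho_{k}\geq L\sim(E-E_{0})^{-1/2}$ this forces
\begin{equation*}
\rho_{k}^{d_{k}}\ \asymp\ \max\bigl\{(E-E_{0})^{-d_{k}/2},(E-E_{0})^{-\gamma_{k}/(1-\gamma)}\bigr\},\qquad k=1,2,
\end{equation*}
so that $|\mathcal{T}|\asymp(E-E_{0})^{-(\max\{d_{1}/2,\gamma_{1}/(1-\gamma)\}+\max\{d_{2}/2,\gamma_{2}/(1-\gamma)\})}$. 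Substituting in $-\ln N(E)\leq d\ln L-\ln\mathbb{P}(\Omega_{\epsilon})\lesssim|\mathcal{T}|\ln(1/(E-E_{0}))$, taking $\ln|\ln\cdot|$, and dividing by $\ln(E-E_{0})<0$ gives the desired $\liminf$ bound; the extra logarithmic factor $\ln\ln(1/(E-E_{0}))$ is $o(|\ln(E-E_{0})|)$ and is absorbed in the double limit.

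I expect the main obstacle to be the anisotropic tail estimate and the correct identification of the exponent $\alpha_{k}(1-\gamma_{3-k})$, from which the characteristic Lifshitz exponent $\gamma_{k}/(1-\gamma)$ emerges after inverting $\rho_{k}^{-d_{k}(1-\gamma)/\gamma_{k}}\leq c(E-E_{0})$; this is precisely where the anisotropic structure of $\rm(H5)$ enters and where the transition between the three regimes is made visible. A secondary subtlety is that in the QC/CQ and classical regimes the trap $\mathcal{T}$ genuinely extends beyond the box $\Lambda_{L}$ in one or both directions (since then $\rho_{k}>L$); this is harmless because the low-noise constraint is imposed on the random variables $\omega_{i}$ indexed by all of $\mathbb{Z}^{d}$, and in fact it is precisely this enlargement of the combinatorial cost from $|\Lambda_{L}\cap\mathbb{Z}^{d}|$ to $|\mathcal{T}|$ that produces the non-trivial classical and mixed contributions $\gamma_{k}/(1-\gamma)$ to the Lifshitz exponent.
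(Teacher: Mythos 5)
Your proposal is correct and follows essentially the same route as the paper's proof: the first inequality of Lemma \ref{sandwich-bound} combined with Lemma \ref{lemma-lower-bound}, the choice $L\sim(E-E_{0})^{-1/2}$, an anisotropic trap of half-side $\sim L^{\beta_{k}}$ with $\beta_{k}=\tfrac{2}{d_{k}}\max\{\tfrac{d_{k}}{2},\tfrac{\gamma_{k}}{1-\gamma}\}$ (your $\rho_{k}$), and a low-noise event whose probability is estimated site-by-site via $\rm(H6)$, yielding $|\ln N(E)|\lesssim(E-E_{0})^{-m}\ln\tfrac{1}{E-E_{0}}$ with the claimed $m$. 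The only (harmless) deviations are that the paper controls the contribution from outside the trap probabilistically, citing \cite[Lemma 5.2]{KW05} for $\mathbb{P}\{W_{\Gamma_{L}^{c}}\geq c_{4}L^{-2}\}\leq\tfrac12$ and then using independence, whereas you carry out that anisotropic summation deterministically, and it takes the per-site threshold $\epsilon\sim(E-E_{0})/\#\Gamma_{L}$ rather than your $\epsilon\sim E-E_{0}$ (both give the same exponent, only the logarithmic factor differs); just make sure the trap strictly contains $\Lambda_{L}$ with a margin of order $\rho_{k}$ (the paper uses $|i_{k}|\leq2L^{\beta_{k}}$), since with $\rho_{k}=L$ exactly your uniform tail bound fails for $x$ near $\partial\Lambda_{L}$.
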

\begin{proof}
Let $\beta_{k}=\max\big\{1,\frac{2}{\alpha(1-\gamma)}\big\}=\frac{2}{d_{k}}\max\big\{\frac{d_{k}}{2},\frac{\gamma_{k}}{1-\gamma}\big\}$, $k=1,2$,
$\Gamma_{L}=\big\{i=(i_{1},i_{2})\in\mathbb{Z}^{d_{1}}\times\mathbb{Z}^{d_{2}}\big||i_{k}|\leq2L^{\beta_{k}},k=1,2\big\}$ and $\Gamma_{L}^{c}=\mathbb{Z}^{d}\backslash\Gamma_{L}$. We define
\begin{equation*}
\begin{split}
W_{\Gamma_{L}}(\omega)&=\frac{1}{L^{d}}\int_{\Lambda_{L}}\sum_{i\in\Gamma_{L}}\frac{\omega_{i}-\omega_{\min}}{(1+|x_{1}-i_{1}|)^{\alpha_{1}}+(1+|x_{2}-i_{2}|)^{\alpha_{2}}}dx,\\
W_{\Gamma_{L}^{c}}(\omega)&=\frac{1}{L^{d}}\int_{\Lambda_{L}}\sum_{i\in\Gamma_{L}^{c}}\frac{\omega_{i}-\omega_{\min}}{(1+|x_{1}-i_{1}|)^{\alpha_{1}}+(1+|x_{2}-i_{2}|)^{\alpha_{2}}}dx
\end{split}
\end{equation*}
so that
$\frac{1}{L^{d}}\int_{\Lambda_{L}}\widehat{W}_{\omega}(x)dx=W_{\Gamma_{L}}(\omega)+W_{\Gamma_{L}^{c}}(\omega)$. Clearly, $W_{\Gamma_{L}}$ and $W_{\Gamma_{L}^{c}}$ are two independent random variables. Moreover, $W_{\Gamma_{L}}(\omega)\leq c_{3}\sum_{i\in\Gamma_{L}}(\omega_{i}-\omega_{\min})$ for some $c_{3}>0$ and there's a constant $c_{4}>0$ such that $\mathbb{P}\{\omega\in\Omega|W_{\Gamma_{L}^{c}}(\omega)\geq c_{4}L^{-2}\}\leq\frac{1}{2}$ for large enough $L$ (see \cite[Lemma 5.2]{KW05}).

By Lemma \ref{lemma-lower-bound} and above analysis, we have for large enough $L$
\begin{equation}\label{estimate-5}
\begin{split}
&\mathbb{P}\big\{\omega\in\Omega\big|E_{0}(H_{\omega,S_{L}}^{D})\leq E\big\}\\
&\quad\quad\geq\mathbb{P}\bigg\{\bigg\{\omega\in\Omega\bigg|E_{0}(H_{\omega,S_{L}}^{D})\leq E\bigg\}\cap\bigg\{\omega\in\Omega\bigg|W_{\Gamma_{L}^{c}}(\omega)<\frac{c_{4}}{L^{2}}\bigg\}\bigg\}\\
&\quad\quad\geq\mathbb{P}\bigg\{\bigg\{\omega\in\Omega\bigg|\sum_{i\in\Gamma_{L}}(\omega_{i}-\omega_{\min})\leq\frac{E-E_{0}}{c_{2}c_{3}}-\frac{c_{1}+c_{2}c_{4}}{c_{2}c_{3}L^{2}}\bigg\}\bigcap\bigg\{\omega\in\Omega\big|W_{\Gamma_{L}^{c}}(\omega)<\frac{c_{4}}{L^{2}}\bigg\}\bigg\}\\
&\quad\quad\geq\frac{1}{2}\mathbb{P}\bigg\{\omega\in\Omega\bigg|\sum_{i\in\Gamma_{L}}(\omega_{i}-\omega_{\min})\leq\frac{E-E_{0}}{c_{2}c_{3}}-\frac{c_{1}+c_{2}c_{4}}{c_{2}c_{3}L^{2}}\bigg\}\\
&\quad\quad=\frac{1}{2}\mathbb{P}\bigg\{\omega\in\Omega\bigg|\sum_{i\in\Gamma_{L}}(\omega_{i}-\omega_{\min})\leq\frac{E-E_{0}}{2c_{2}c_{3}}\bigg\},
\end{split}
\end{equation}
where we set $L=\sqrt{\frac{2(c_{1}+c_{2}c_{4})}{E-E_{0}}}$ for $E$ close enough to $E_{0}$ in the last step. Let $\#\Gamma_{L}$ be the cardinal number of $\Gamma_{L}$. The probability in the last line of \eqref{estimate-5} is bounded from below by
\begin{equation*}
\mathbb{P}\bigg\{\omega\in\Omega\bigg|\omega_{i}-\omega_{\min}\leq\frac{E-E_{0}}{2c_{2}c_{3}\#\Gamma_{L}}\,\,\text{for all}\,\,i\in\Gamma_{L}\bigg\},
\end{equation*}
which, by i.i.d and $\rm(H6)$, is bounded from below by $C\big(\frac{E-E_{0}}{2c_{2}c_{3}\#\Gamma_{L}}\big)^{N\#\Gamma_{L}}$ for $E$ close to $E_{0}$, or equivalently, large enough $L$. Since $\#\Gamma_{L}\leq c_{4}L^{\beta_{1}d_{1}+\beta_{2}d_{2}}$ for some $c_{4}>0$, we have for $E$ close to $E_{0}$, or equivalently, large enough $L$
\begin{equation*}
\begin{split}
\bigg(\frac{E-E_{0}}{2c_{2}c_{3}\#\Gamma_{L}}\bigg)^{N\#\Gamma_{L}}&\geq\bigg(\frac{E-E_{0}}{2c_{2}c_{3}c_{4}L^{\beta_{1}d_{1}+\beta_{2}d_{2}}}\bigg)^{Nc_{4}L^{\beta_{1}d_{1}+\beta_{2}d_{2}}}\\
&=\bigg(c_{5}(E-E_{0})^{1-(\beta_{1}d_{1}+\beta_{2}d_{2})/2}\bigg)^{c_{6}(E-E_{0})^{-(\beta_{1}d_{1}+\beta_{2}d_{2})/2}},
\end{split}
\end{equation*}
where $c_{5},c_{6}>0$. The above estimate and Lemma \ref{sandwich-bound} lead to the theorem.
\end{proof}


\subsection{Upper Bound in the Quantum-Classical/Classical-Quantum Regime}

In this section, we study the lower bound of the Lifshitz exponentin the quantum-classical/classical-quantum regime, that is, we assume $\frac{d_{1}}{2}>\frac{\gamma_{1}}{1-\gamma}$ and $\frac{d_{2}}{2}\leq\frac{\gamma_{2}}{1-\gamma}$ (the quantum-classical regime), or
$\frac{d_{1}}{2}\leq\frac{\gamma_{1}}{1-\gamma}$ and $\frac{d_{2}}{2}>\frac{\gamma_{2}}{1-\gamma}$ (the classical-quantum regime). We here focus on the case in the quantum-classical regime.

For $R>0$, we define
\begin{equation*}
\widehat{W}_{\omega,R}(x)=f_{u}\sum_{\substack{i_{1}\in\mathbb{Z}^{d_{1}}\\i_{2}\in\mathbb{Z}^{d_{2}},|i_{2}|>R}}\frac{\min\{\omega_{i}-\omega_{\min},1\}}{(1+|x_{1}-i_{1}|)^{\alpha_{1}}+(1+|x_{2}-i_{2}|)^{\alpha_{2}}},\quad
x\in\mathbb{R}^{d},\,\,\omega\in\Omega
\end{equation*}
and let $\widetilde{W}_{\omega,R}(x,y)=\widehat{W}_{\omega,R}(x)\chi_{G}(y)$ for $x\in\mathbb{R}^{d}$, $y\in\mathbb{R}^{n}$ and $\omega\in\Omega$. By $\rm(H5)$, $0\leq\widetilde{W}_{\omega,R}\leq W_{\omega}$ for all $R>0$ and $\omega\in\Omega$. Let $\widetilde{H}_{\omega,R}=H_{\rm per}+\widetilde{W}_{\omega,R}$ and denote by $\widetilde{H}_{\omega,R}^{\chi,S_{L,1}}$ the restriction of $\widetilde{H}_{\omega,R}$ to $L^{2}(S_{L,1})$ with the Mezincescu boundary condition on $\partial S_{L,1}$.

The main result in the quantum-classical regime is given by

\begin{thm}\label{theorem-upper-bound-q/cl}
Suppose $\frac{d_{1}}{2}>\frac{\gamma_{1}}{1-\gamma}$ and $\frac{d_{2}}{2}\leq\frac{\gamma_{2}}{1-\gamma}$. The Lifshitz exponent in the quantum-classical regime is bounded from below by $\frac{d_{1}}{2}+\frac{\gamma_{2}}{1-\gamma}$. That is,
\begin{equation*}
\limsup_{E\downarrow E_{0}}\frac{\ln|\ln N(E)|}{\ln(E-E_{0})}\leq-\frac{d_{1}}{2}-\frac{\gamma_{2}}{1-\gamma}.
\end{equation*}
\end{thm}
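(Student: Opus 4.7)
The plan is to follow the Kirsch--Warzel quantum-classical strategy on the elongated strip $S_{L,1}$, using the cuboid geometry of Section~\ref{subsec-Mezincescu-spectral-gap} that was developed precisely for this regime. First I apply the sandwiching bound of Lemma~\ref{sandwich-bound} with $(\Lambda,S)=(\Lambda_{L,1},S_{L,1})$ and choose $L=L(E)$ by $L^{2}=C_{\rm per}/(2(E-E_{0}))$ with $C_{\rm per}$ the constant from Lemma~\ref{spectral-gap}. The gap estimate then forces $E<E_{1}(H_{{\rm per},S_{L,1}}^{\chi})$, so $N(H_{{\rm per},S_{L,1}}^{\chi},E)=1$, and the upper bound collapses to
\[
N(E)\leq L^{-d_{1}}\,\mathbb{P}\bigl\{E_{0}(H_{\omega,S_{L,1}}^{\chi})\leq E\bigr\}.
\]

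Second, I bound this probability through a Temple-type estimate. Monotonicity $\widetilde{W}_{\omega,R}\leq W_{\omega}$ gives $E_{0}(H_{\omega,S_{L,1}}^{\chi})\geq E_{0}(\widetilde{H}_{\omega,R}^{\chi,S_{L,1}})$, and Temple's inequality with the trial state $\psi_{L,1}$ from \eqref{ground-states-1} and the spectral gap of order $L^{-2}$ from Lemma~\ref{spectral-gap} yields, after absorbing the quadratic error,
\[
E_{0}(\widetilde{H}_{\omega,R}^{\chi,S_{L,1}})\geq E_{0}+\tfrac{1}{2}\langle\psi_{L,1},\widetilde{W}_{\omega,R}\psi_{L,1}\rangle.
\]
The truncation $\min\{\omega_{i}-\omega_{\min},1\}$ and the far-layer restriction $|i_{2}|>R$ in $\widetilde{W}_{\omega,R}$ are exactly what is needed to make $\|\widetilde{W}_{\omega,R}\psi_{L,1}\|^{2}/(E_{1}-E_{0})$ strictly smaller than the linear Rayleigh--Ritz term.

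Third, I compute the quadratic form. Using $\psi_{L,1}=L^{-d_{1}/2}\psi_{0}|_{S_{L,1}}$, Lemma~\ref{lemma-ground-state}, assumption (H5), and the convergence of $\int_{\mathbb{R}^{n}}\bar{\psi}_{0}(y)^{2}\,dy$ (already used in Section~5.1), I get
\[
\langle\psi_{L,1},\widetilde{W}_{\omega,R}\psi_{L,1}\rangle\geq\frac{c}{L^{d_{1}}}\sum_{\substack{|i_{1}|\leq L\\ R<|i_{2}|\leq 2R}}\min\{\omega_{i}-\omega_{\min},1\}\int_{\Lambda_{L,1}}\frac{dx}{(1+|x_{1}-i_{1}|)^{\alpha_{1}}+(1+|x_{2}-i_{2}|)^{\alpha_{2}}},
\]
and on this index block the inner integral is of order $L^{d_{1}}R^{-\alpha_{2}}$. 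Writing $\Gamma_{L,R}=\{(i_{1},i_{2}):|i_{1}|\leq L,\,R<|i_{2}|\leq 2R\}$, of cardinality of order $L^{d_{1}}R^{d_{2}}$, the problem reduces to bounding
\[
\mathbb{P}\Bigl\{\sum_{i\in\Gamma_{L,R}}\min\{\omega_{i}-\omega_{\min},1\}\leq c'(E-E_{0})R^{\alpha_{2}}\Bigr\}.
\]
Since the summands are i.i.d., bounded and have strictly positive mean, a standard Chernoff/Bernstein estimate (of the kind used in \cite[Lemma~5.3]{KW05}) dominates this by $\exp(-c''L^{d_{1}}R^{d_{2}})$ as long as the threshold $c'(E-E_{0})R^{\alpha_{2}}$ is a fixed fraction of the expected total. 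Picking the classical scale $R=(E-E_{0})^{-1/(\alpha_{2}(1-\gamma))}$ makes $L^{d_{1}}R^{d_{2}}$ of order $(E-E_{0})^{-d_{1}/2-\gamma_{2}/(1-\gamma)}$, producing the claimed exponent.

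The main obstacle will be the Temple step, because the Mezincescu gap is only of order $E-E_{0}$ itself, so the quadratic correction must be shown to be genuinely of smaller order than the linear term on the entire probability event we are after. This is exactly what fixes the shape of $\widetilde{W}_{\omega,R}$: the truncation at $1$ controls the operator norm uniformly, and the restriction $|i_{2}|>R$ together with the anisotropic decay (H5) makes $\|\widetilde{W}_{\omega,R}\psi_{L,1}\|^{2}$ small in $R$. Balancing these cutoffs against the two scales $L$ and $R$ is the delicate book-keeping that distinguishes the quantum-classical regime from the pure quantum and pure classical cases; once it is in place, the remaining steps are routine given Lemmas~\ref{sandwich-bound}, \ref{spectral-gap}, and \ref{lemma-ground-state}.
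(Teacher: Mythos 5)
Your overall strategy is the paper's: sandwich bound on the elongated strip $S_{L,1}$, a reduced potential $\widetilde{W}_{\omega,R}$ with couplings truncated at $1$ and only the far layers $|i_{2}|>R$ kept, Temple's inequality with the trial state $\psi_{L,1}$ and the cuboid gap of Lemma \ref{spectral-gap}, and finally a large-deviation bound over a block of order $L^{d_{1}}R^{d_{2}}$ sites with $L\sim(E-E_{0})^{-1/2}$, $R\sim(E-E_{0})^{-1/(\alpha_{2}(1-\gamma))}$. However, there is a concrete error in the third step. For $|i_{1}|\lesssim L$, $R<|i_{2}|\leq 2R$ and $x\in\Lambda_{L,1}$, the per-site integral $\int_{\Lambda_{L,1}}\bigl((1+|x_{1}-i_{1}|)^{\alpha_{1}}+(1+|x_{2}-i_{2}|)^{\alpha_{2}}\bigr)^{-1}dx$ is \emph{not} of order $L^{d_{1}}R^{-\alpha_{2}}$; that is only an upper bound. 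The integrand is comparable to $R^{-\alpha_{2}}$ only on the set $|x_{1}-i_{1}|\lesssim R^{\alpha_{2}/\alpha_{1}}$, and since $\frac{d_{1}}{2}>\frac{\gamma_{1}}{1-\gamma}$ means $\alpha_{1}(1-\gamma)>2$, one has $R^{\alpha_{2}/\alpha_{1}}\sim L^{2/(\alpha_{1}(1-\gamma))}\ll L$, so the integral is in fact $\asymp R^{-\alpha_{2}(1-\gamma_{1})}$, smaller than your claim by the divergent factor $L^{d_{1}}R^{-\alpha_{2}\gamma_{1}}\sim L^{d_{1}-2\gamma_{1}/(1-\gamma)}$. (Your value is also inconsistent with the uniform bound $\sup_{x}\widehat{W}_{R}(x)\lesssim R^{-\alpha_{2}(1-\gamma)}$, which is exactly what you need in the Temple step.) With the corrected integral, the event $E_{0}\leq E$ forces $\frac{1}{\#\Gamma_{L,R}}\sum_{i}\min\{\omega_{i}-\omega_{\min},1\}\lesssim(E-E_{0})R^{\alpha_{2}(1-\gamma)}$, which under your rigid choices $L^{2}=C_{\rm per}/(2(E-E_{0}))$, $R=(E-E_{0})^{-1/(\alpha_{2}(1-\gamma))}$ is a \emph{constant} — not a vanishing fraction of $\mathbb{E}(\xi)$ — so the Chernoff bound does not follow as stated.

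This is precisely why the paper carries an extra large parameter $r$: it takes $R=(rL)^{2/(\alpha_{2}(1-\gamma))}$ and later $L=\sqrt{c_{1}/(r^{3}(E-E_{0}))}$, so that (a) $\sup\widehat{W}_{R}\leq c(rL)^{-2}$ is strictly below the gap $\geq C/L^{2}$ and the quadratic Temple error is absorbed ($2c\leq C_{1}r^{2}$), and (b) the large-deviation threshold becomes $\asymp 1/r$, which can be pushed below $\mathbb{E}(\xi)$; the averaging step ([KW05, Lemma 4.7]) also produces an error $c_{2}L^{-\alpha_{1}(1-\gamma)}$ that is $o(L^{-2})$ only because $\alpha_{1}(1-\gamma)>2$, a point your sketch omits. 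None of this changes the scales, so the exponent $\frac{d_{1}}{2}+\frac{\gamma_{2}}{1-\gamma}$ you obtain is correct and your outline is repairable along exactly the paper's lines, but as written the Temple applicability, the absorption of the quadratic term, and the large-deviation step are not closed, and the $L^{d_{1}}R^{-\alpha_{2}}$ estimate must be replaced by $R^{-\alpha_{2}(1-\gamma_{1})}$ together with the tunable parameter $r$.
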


To prove the above theorem, we first find an uniform upper bound on $\widehat{W}_{\omega,R}$ for all $\omega\in\Omega$.

\begin{lem}\label{lemma-tools}
For $R>0$, we define
\begin{equation*}
\widehat{W}_{R}(x)=f_{u}\sum_{\substack{i_{1}\in\mathbb{Z}^{d_{1}}\\i_{2}\in\mathbb{Z}^{d_{2}},|i_{2}|>R}}\frac{1}{(1+|x_{1}-i_{1}|)^{\alpha_{1}}+(1+|x_{2}-i_{2}|)^{\alpha_{2}}},\quad
x\in\mathbb{R}^{d}.
\end{equation*}
There hold the following statements.
\begin{itemize}
\item[\rm(i)] $\widehat{W}_{\omega,R}\leq\widehat{W}_{R}$ pointwise for all $R>0$ and $\omega\in\Omega$.

\item[\rm(ii)] $\widehat{W}_{R}$ is $\mathbb{Z}^{d_{1}}$-periodic in $x_{1}$-direction.

\item[\rm(iii)] There's some constant $c>0$ such that $\sup_{x\in\overline{\Lambda}_{1}}\widehat{W}_{R}(x)\leq\frac{c}{R^{\alpha_{2}(1-\gamma)}}$.
\end{itemize}
\end{lem}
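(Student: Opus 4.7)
Parts (i) and (ii) are essentially tautological and I would dispatch them in a few lines. For (i), since $0 \le \min\{\omega_i - \omega_{\min}, 1\} \le 1$ for every $i \in \mathbb{Z}^d$ and every $\omega \in \Omega$, replacing each such factor by $1$ in the series defining $\widehat{W}_{\omega,R}(x)$ produces $\widehat{W}_R(x)$ term-by-term, yielding the pointwise bound. For (ii), for any $j \in \mathbb{Z}^{d_1}$ the shift $i_1 \mapsto i_1 + j$ is a bijection of $\mathbb{Z}^{d_1}$ that leaves the constraint $|i_2| > R$ intact, so substituting into the series for $\widehat{W}_R(x_1 + j, x_2)$ and relabeling the summation recovers $\widehat{W}_R(x_1, x_2)$. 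The rest of the effort goes into (iii).

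For (iii), I would first use $|x_k| \le 1/2$ on $\overline{\Lambda}_1$ to replace $1 + |x_k - i_k|$ by $1 + |i_k|$ at the cost of a multiplicative constant, reducing the claim to
\begin{equation*}
S(R) := \sum_{i_1 \in \mathbb{Z}^{d_1}}\ \sum_{\substack{i_2 \in \mathbb{Z}^{d_2}\\ |i_2| > R}} \frac{1}{(1+|i_1|)^{\alpha_1} + (1+|i_2|)^{\alpha_2}} \lesssim R^{-\alpha_2(1-\gamma)}.
\end{equation*}
The heart of the argument is a two-scale split of the inner sum over $i_1$, for each fixed $i_2$ with $|i_2| > R$, at the threshold $|i_1| = A^{1/\alpha_1}$ where $A := (1+|i_2|)^{\alpha_2}$. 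On $\{|i_1| \le A^{1/\alpha_1}\}$ the denominator is at least $A$ and there are $\lesssim A^{d_1/\alpha_1}$ lattice points, contributing $\lesssim A^{\gamma_1 - 1}$; on $\{|i_1| > A^{1/\alpha_1}\}$ the denominator is at least $(1+|i_1|)^{\alpha_1}$ and, by an integral test using $\alpha_1 > d_1$, the partial sum $\sum (1+|i_1|)^{-\alpha_1}$ is also $\lesssim A^{(d_1-\alpha_1)/\alpha_1} = A^{\gamma_1 - 1}$. Thus the inner sum is $\lesssim (1+|i_2|)^{-\alpha_2(1-\gamma_1)}$, and summing over $|i_2| > R$, which converges because $\alpha_2(1-\gamma_1) - d_2 = \alpha_2(1-\gamma) > 0$, produces the desired rate $R^{-\alpha_2(1-\gamma)}$.

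The positivity $\gamma < 1$ used above is automatic in the regimes under study, since each of the inequalities $\frac{d_k}{2} \le \frac{\gamma_k}{1-\gamma}$ appearing in Theorem \ref{main-theorem} presupposes $1 - \gamma > 0$. The main obstacle I anticipate is obtaining the sharp exponent $-\alpha_2(1-\gamma)$ rather than a slightly weaker one: a softer AM-GM-type bound $\frac{1}{a+b} \le a^{-\theta} b^{-(1-\theta)}$ would force strict inequalities $\gamma_1 < \theta < 1 - \gamma_2$ for the factored one-dimensional tails to converge, so the constant blows up as $\theta \downarrow \gamma_1$ and one loses the critical endpoint that yields the sharp rate. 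The two-scale split above circumvents this by balancing the two contributions precisely at the critical scale $|i_1| \sim A^{1/\alpha_1}$, turning the marginal divergence into the same $A^{\gamma_1 - 1}$ on both sides.
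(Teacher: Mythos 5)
Your proposal is correct, and all three parts check out. Parts (i) and (ii) are indeed immediate, exactly as the paper treats them. The real difference is in (iii): the paper offers no argument at all there, simply declaring it ``a summation version of [KW05, Lemma 3.5],'' whereas you supply the estimate in full. Your two-scale split of the $i_1$-sum at the critical threshold $|i_1|\sim(1+|i_2|)^{\alpha_2/\alpha_1}$ is the discrete counterpart of the scaling/change-of-variables device behind the integral estimate in KW05, and it correctly balances the two contributions to give $A^{\gamma_1-1}$ on each side, hence the inner sum is of order $(1+|i_2|)^{-\alpha_2(1-\gamma_1)}$; since $\alpha_2(1-\gamma_1)-d_2=\alpha_2(1-\gamma)>0$, the tail sum over $|i_2|>R$ yields the sharp rate $R^{-\alpha_2(1-\gamma)}$, matching the lemma. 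Your observation that $\gamma<1$ is forced by the regime in which the lemma is used (the inequality $\frac{d_2}{2}\leq\frac{\gamma_2}{1-\gamma}$ with positive left-hand side requires $1-\gamma>0$) is also right, and your closing remark explains well why a Young-type interpolation bound $\frac{1}{a+b}\leq a^{-\theta}b^{-(1-\theta)}$ would miss the endpoint exponent. Two cosmetic points only: on $\overline{\Lambda}_1$ one has $|x_k|\leq\sqrt{d_k}/2$ in the Euclidean norm rather than $1/2$, which changes nothing but the constant in the reduction step; and for $0<R\leq1$ the claimed bound holds trivially (after enlarging $c$) because the full sum converges when $\gamma<1$, so the integral-comparison step is only needed for $R$ large, which is the case in the application.
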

\begin{proof}
$\rm(i)$ and $\rm(ii)$ are trivial. $\rm(iii)$ is a summation version of \cite[Lemma 3.5]{KW05}.
\end{proof}

Next, we estimate a lower bound on the lowest eigenvalue of $\widetilde{H}_{\omega,R}^{S_{L,1},\chi}$.

\begin{lem}\label{lemma-lower-bound-quantum-classical}
Let $R=(rL)^{2/\alpha_{2}(1-\gamma)}$ with $r$ being large enough. Then, the ground state energy, $E_{0}(\widetilde{H}_{\omega,R}^{\chi,S_{L,1}})$, of
$\widetilde{H}_{\omega,R}^{\chi,S_{L,1}}$ satisfies
\begin{equation}\label{estimate-lower-bound-quan-classic}
E_{0}(\widetilde{H}_{\omega,R}^{\chi,S_{L,1}})\geq E_{0}+\frac{C}{L}\int_{\Lambda_{L,1}}\widehat{W}_{\omega,R}(x)dx
\end{equation}
for some $C>0$ and large enough $L$.
\end{lem}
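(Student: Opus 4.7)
The plan is to apply Temple's inequality to $\widetilde{H}_{\omega,R}^{\chi,S_{L,1}}=H_{{\rm per},S_{L,1}}^{\chi}+\widetilde{W}_{\omega,R}$ with the unperturbed ground state $\psi_{L,1}$ from \eqref{ground-states-1} as the trial vector. Since $H_{{\rm per},S_{L,1}}^{\chi}\psi_{L,1}=E_{0}\psi_{L,1}$ and $\widetilde{W}_{\omega,R}\ge 0$, one has $\langle\psi_{L,1},\widetilde{H}_{\omega,R}^{\chi,S_{L,1}}\psi_{L,1}\rangle=E_{0}+\langle\psi_{L,1},\widetilde{W}_{\omega,R}\psi_{L,1}\rangle$, while the Temple variance is controlled by $\|\widetilde{W}_{\omega,R}\psi_{L,1}\|^{2}\le\|\widetilde{W}_{\omega,R}\|_{\infty}\langle\psi_{L,1},\widetilde{W}_{\omega,R}\psi_{L,1}\rangle$.

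The technical heart is calibrating $R$ so that the perturbation is quantitatively small relative to the spectral gap. I would use Lemma~\ref{lemma-tools}, together with the $\mathbb{Z}^{d_{1}}$-periodicity of $\widehat{W}_{R}$ in $x_{1}$ and the containment $\Lambda_{L,1}\subset\mathbb{R}^{d_{1}}\times(-\tfrac{1}{2},\tfrac{1}{2})^{d_{2}}$, to obtain $\|\widetilde{W}_{\omega,R}\|_{\infty}\le cR^{-\alpha_{2}(1-\gamma)}$; the choice $R=(rL)^{2/(\alpha_{2}(1-\gamma))}$ then yields $\|\widetilde{W}_{\omega,R}\|_{\infty}\le c/(rL)^{2}$. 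By min-max (using $\widetilde{W}_{\omega,R}\ge 0$), Lemma~\ref{spectral-gap} transfers the gap: $E_{1}(\widetilde{H}_{\omega,R}^{\chi,S_{L,1}})-E_{0}\ge C_{\mathrm{per}}/L^{2}$ for large $L$. Taking $r$ large enough that $c/(rL)^{2}\le C_{\mathrm{per}}/(2L^{2})$ forces the Temple denominator to be at least $C_{\mathrm{per}}/(2L^{2})$, so the inequality applies and gives
\begin{equation*}
E_{0}\bigl(\widetilde{H}_{\omega,R}^{\chi,S_{L,1}}\bigr)\ge E_{0}+\Bigl(1-\tfrac{2c}{r^{2}C_{\mathrm{per}}}\Bigr)\bigl\langle\psi_{L,1},\widetilde{W}_{\omega,R}\psi_{L,1}\bigr\rangle\ge E_{0}+\tfrac{1}{2}\bigl\langle\psi_{L,1},\widetilde{W}_{\omega,R}\psi_{L,1}\bigr\rangle
\end{equation*}
once $r$ satisfies $2c/(r^{2}C_{\mathrm{per}})\le 1/2$.

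To finish, I would convert the quantum expectation into a spatial integral. Using \eqref{ground-states-1} and the product form $\widetilde{W}_{\omega,R}(x,y)=\widehat{W}_{\omega,R}(x)\chi_{G}(y)$,
\begin{equation*}
\bigl\langle\psi_{L,1},\widetilde{W}_{\omega,R}\psi_{L,1}\bigr\rangle=\frac{1}{L^{d_{1}}}\int_{\Lambda_{L,1}}\widehat{W}_{\omega,R}(x)\int_{G}\psi_{0}(x,y)^{2}\,dy\,dx,
\end{equation*}
and Lemma~\ref{lemma-ground-state} provides the uniform positive lower bound $\int_{G}\psi_{0}(x,y)^{2}\,dy\ge C_{1}^{2}\int_{G}\bar\psi_{0}(y)^{2}\,dy$, which yields the desired estimate \eqref{estimate-lower-bound-quan-classic}. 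The main obstacle is precisely this matching of scales: the spectral gap decays like $L^{-2}$ and the sup-norm of $\widetilde{W}_{\omega,R}$ decays like $R^{-\alpha_{2}(1-\gamma)}$, so the exponent $2/(\alpha_{2}(1-\gamma))$ in the definition of $R$ is forced by the gap, while the free prefactor $r$ is what buys enough smallness to beat the Temple error term and recover the full first-order response.
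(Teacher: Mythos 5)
Your proof follows essentially the same route as the paper's: Temple's inequality with the trial function $\psi_{L,1}$, the sup-norm bound from Lemma \ref{lemma-tools} calibrated by the choice $R=(rL)^{2/\alpha_{2}(1-\gamma)}$, the gap transfer $E_{1}(\widetilde{H}_{\omega,R}^{\chi,S_{L,1}})\geq E_{1}(H_{{\rm per},S_{L,1}}^{\chi})$ via min-max together with Lemma \ref{spectral-gap}, and Lemma \ref{lemma-ground-state} to convert $\langle\psi_{L,1},\widetilde{W}_{\omega,R}\psi_{L,1}\rangle$ into the spatial integral over $\Lambda_{L,1}$. The only difference is cosmetic: with the normalization \eqref{ground-states-1} your computation correctly produces the prefactor $L^{-d_{1}}$, whereas the paper's statement and proof write $L^{-1}$, which agrees only when $d_{1}=1$.
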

\begin{proof}
We apply Temple's inequality (see \cite[Lemma 6.3]{Ki08}) with variational function $\psi_{L,1}$ (defined in \eqref{ground-states-1}) to the self-adjoint operator $\widetilde{H}_{\omega,R}^{\chi,S_{L,1}}$. We first estimate
\begin{equation*}
\begin{split}
\big\langle\psi_{L,1},\widetilde{H}_{\omega,R}^{\chi,S_{L,1}}\psi_{L,1}\big\rangle-E_{1}(H_{\rm per}^{\chi,S_{L,1}})=\big\langle\psi_{L,1},\widetilde{W}_{\omega,R}\psi_{L,1}\big\rangle+E_{0}(H_{\rm per}^{\chi,S_{L,1}})-E_{1}(H_{\rm per}^{\chi,S_{L,1}})\leq-\frac{C_{1}}{L^{2}}
\end{split}
\end{equation*}
for some $C_{1}>0$, where we have used Lemma \ref{spectral-gap} and the estimate $\big\langle\psi_{L,1},\widetilde{W}_{\omega,R}\psi_{L,1}\big\rangle\leq\frac{c}{R^{\alpha_{2}(1-\gamma)}}$, which follows from Lemma \ref{lemma-tools} and the $\mathbb{Z}^{d}$-periodicity of $\psi_{0}$. Due to the fact that $E_{1}(\widetilde{H}_{\omega,R}^{\chi,S_{L,1}})\geq E_{1}(H_{\rm per}^{\chi,S_{L,1}})$, we have
\begin{equation}\label{inequality-3}
\big\langle\psi_{L,1},\widetilde{H}_{\omega,R}^{\chi,S_{L,1}}\psi_{L,1}\big\rangle-E_{1}(\widetilde{H}_{\omega,R}^{\chi,S_{L,1}})\leq-\frac{C_{1}}{L^{2}}<0.
\end{equation}
It then follows from Temple's inequality that
\begin{equation*}
\begin{split}
E_{0}(\widetilde{H}_{\omega,R}^{\chi,S_{L,1}})&\geq\big\langle\psi_{L,1},\widetilde{H}_{\omega,h}^{\chi,S_{L,1}}\psi_{L,1}\big\rangle-\frac{\big\langle\widetilde{H}_{\omega,R}^{\chi,S_{L,1}}\psi_{L,1},\widetilde{H}_{\omega,R}^{\chi,S_{L,1}}\psi_{L,1}\big\rangle-\big\langle\psi_{L,1},\widetilde{H}_{\omega,R}^{\chi,S_{L,1}}\psi_{L,1}\big\rangle^{2}}{E_{1}(\widetilde{H}_{\omega,R}^{\chi,S_{L,1}})-\big\langle\psi_{L,1},\widetilde{H}_{\omega,R}^{\chi,S_{L,1}}\psi_{L,1}\big\rangle}\\
&\geq E_{0}+\big\langle\psi_{L,1},\widetilde{W}_{\omega,R}\psi_{L,1}\big\rangle-\frac{L^{2}}{C_{1}}\big\langle\widetilde{W}_{\omega,R}\psi_{L,1},\widetilde{W}_{\omega,R}\psi_{L,1}\big\rangle\\
&\geq E_{0}+\frac{1}{2}\big\langle\psi_{L,1},\widetilde{W}_{\omega,R}\psi_{L,1}\big\rangle,
\end{split}
\end{equation*}
where we used \eqref{inequality-3} and the fact
\begin{equation*}
\begin{split}
&\big\langle\widetilde{H}_{\omega,R}^{\chi,S_{L,1}}\psi_{L,1},\widetilde{H}_{\omega,R}^{\chi,S_{L,1}}\psi_{L,1}\big\rangle-\big\langle\psi_{L,1},\widetilde{H}_{\omega,R}^{\chi,S_{L,1}}\psi_{L,1}\big\rangle^{2}\\
&\quad\quad=\big\langle\widetilde{W}_{\omega,R}\psi_{L,1},\widetilde{W}_{\omega,R}\psi_{L,1}\big\rangle-\big\langle\psi_{L,1},\widetilde{W}_{\omega,R}\psi_{L,1}\big\rangle^{2}\\
&\quad\quad\leq\big\langle\widetilde{W}_{\omega,R}\psi_{L,1},\widetilde{W}_{\omega,R}\psi_{L,1}\big\rangle
\end{split}
\end{equation*}
in the second inequality, and used $\big\langle\widetilde{W}_{\omega,R}\psi_{L,1},\widetilde{W}_{\omega,R}\psi_{L,1}\big\rangle\leq\frac{c}{R^{\alpha_{2}(1-\gamma)}}\big\langle\psi_{L,1},\widetilde{W}_{\omega,R}\psi_{L,1}\big\rangle$
and took $r$ be large enough such that $2c\leq C_{1}r^{2}$ in the third inequality.

To finish the proof, we use Lemma \ref{lemma-ground-state} and compute
\begin{equation*}
\begin{split}
\big\langle\psi_{L,1},\widetilde{W}_{\omega,R}\psi_{L,1}\big\rangle=\frac{1}{L}\int_{S_{L,1}}\widehat{W}_{\omega,R}(x)\chi_{G}(y)\psi_{0}(x,y)^{2}dxdy\geq\frac{C_{2}}{L}\int_{\Lambda_{L,1}}\widehat{W}_{\omega,R}(x)dx\int_{\mathbb{R}^{n}}\chi_{G}(y)\bar{\psi}_{0}(y)^{2}dy,
\end{split}
\end{equation*}
which leads to the result.
\end{proof}

Finally, we prove Theorem \ref{theorem-upper-bound-q/cl}.

\begin{proof}[Proof of Theorem \ref{theorem-upper-bound-q/cl}]
By employing \cite[Lemma 4.7, Remark 4.8]{KW05} and choosing $R=(rL)^{2/\alpha_{2}(1-\gamma)}$ with $r$ being large enough so that Lemma \ref{lemma-lower-bound-quantum-classical} holds, there are constants $c_{1},c_{2}>0$ such that
\begin{equation}\label{estimate-lower-bound-quantum-classic-1}
\frac{C}{L}\int_{\Lambda_{L,1}}\widehat{W}_{\omega,R}(x)dx\geq\frac{c_{1}}{(rL)^{2}}\frac{1}{\#\Gamma_{L}}\sum_{i\in\widetilde{\Lambda}}\min\{\omega_{i}-\omega_{\min},1\}-\frac{c_{2}}{L^{\alpha_{1}(1-\gamma)}},
\end{equation}
where $\Gamma_{L}=\big\{i=(i_{1},i_{2})\in\mathbb{Z}^{2}\big||i_{1}|\leq\frac{L}{8},\,\,R<|i_{2}|\leq2R\big\}$ and $\#\Gamma_{L}$ is the cardinal number of $\Gamma_{L}$.

Using \eqref{estimate-lower-bound-quan-classic}, \eqref{estimate-lower-bound-quantum-classic-1} and the fact $E_{0}(H_{\omega}^{\chi,S_{L,1}})\geq
E_{0}(\widetilde{H}_{\omega,R}^{\chi,S_{L,1}})$, we obtain
\begin{equation}\label{large-dev-quan-cla}
\begin{split}
\mathbb{P}\Big\{\omega\in\Omega\Big|E_{0}(H_{\omega}^{S_{L,1},\chi})\leq E\Big\}\leq\mathbb{P}\bigg\{\omega\in\Omega\bigg|\frac{1}{\#\Gamma_{L}}\sum_{i\in\Gamma_{L}}\xi_{i}(\omega)\leq\frac{(rL)^{2}}{c_{1}}\bigg(E-E_{0}+\frac{c_{2}}{L^{\alpha_{1}(1-\gamma)}}\bigg)\bigg\},
\end{split}
\end{equation}
where $\xi_{i}(\omega)=\min\{\omega_{i}-\omega_{\min},1\}$ for $i\in\Gamma_{L}$. Since $\alpha_{1}(1-\gamma)>2$ by $\frac{d_{1}}{2}>\frac{\gamma_{1}}{1-\gamma}$, $\frac{(rL)^{2}}{c_{1}}\frac{c_{2}}{L^{\alpha_{1}(1-\gamma)}}\leq\frac{1}{r}$ for $L$ large enough. Setting $L=\sqrt{\frac{c_{1}}{r^{3}(E-E_{0})}}$, we have $\frac{(rL)^{2}}{c_{1}}(E-E_{0})=\frac{1}{r}$. Therefore, the probability on the right-hand side of \eqref{large-dev-quan-cla} is bounded from above by $\mathbb{P}\big\{\omega\in\Omega\big|\frac{1}{\#\Gamma_{L}}\sum_{i\in\Gamma_{L}}\xi_{i}(\omega)\leq\frac{2}{r}\big\}$, which is the probability of a large deviation event (see \cite{DZ10}) if $r$ is large enough and $r>\frac{2}{\mathbb{E}(\xi)}$, where $\xi$ is the general representation of the i.i.d random variables $\xi_{i},i\in\Gamma_{L}$. Hence, we can argue as in the proof of \cite[Lemma 6.4]{Ki08} that there's some $c_{3}>0$ such that
\begin{equation}\label{large-dev-estimate}
\mathbb{P}\Big\{\omega\in\Omega\Big|E_{0}(H_{\omega}^{S_{L,1},\chi})\leq E\Big\}\leq e^{-c_{3}\#\Gamma_{L}}.
\end{equation}
The theorem then follows from \eqref{large-dev-estimate}, the fact that
\begin{equation*}
\#\Gamma_{L}\geq c_{4}L^{d_{1}}(rL)^{2\gamma_{2}/(1-\gamma)}=c_{4}c_{1}^{\frac{d_{1}}{2}+\frac{\gamma_{2}}{1-\gamma}}r^{-\frac{3d_{1}}{2}-\frac{\gamma_{2}}{1-\gamma}}(E-E_{0})^{-\frac{d_{1}}{2}-\frac{\gamma_{2}}{1-\gamma}}
\end{equation*}
for some $c_{4}>0$ and Lemma \ref{sandwich-bound}.
\end{proof}

The result in the classical-quantum regime can be proven analogously. We state the result without proof.

\begin{thm}\label{theorem-upper-bound-cl/q}
Suppose $\frac{d_{1}}{2}\leq\frac{\gamma_{1}}{1-\gamma}$ and $\frac{d_{2}}{2}>\frac{\gamma_{2}}{1-\gamma}$. The Lifshitz exponent in the classical-quantum regime is bounded from below by $\frac{\gamma_{1}}{1-\gamma}+\frac{d_{2}}{2}$. That is,
\begin{equation*}
\limsup_{E\downarrow E_{0}}\frac{\ln|\ln N(E)|}{\ln(E-E_{0})}\leq-\frac{\gamma_{1}}{1-\gamma}-\frac{d_{2}}{2}.
\end{equation*}
\end{thm}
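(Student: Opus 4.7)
The plan is to mirror the proof of Theorem \ref{theorem-upper-bound-q/cl} by exchanging the roles of the two coordinate blocks $x_1\in\mathbb{R}^{d_1}$ and $x_2\in\mathbb{R}^{d_2}$. Everything we did in the quantum-classical case relied on the following pattern: the quantum direction ($x_1$ there, $x_2$ here) is the one along which we enlarge the strip and use the Neumann-type spectral gap $\sim L^{-2}$, while the classical direction ($x_2$ there, $x_1$ here) is the one where we shift the impurity index away from the strip and pay the anisotropic decay price. Since Lemma \ref{spectral-gap} already gives the gap estimate $E_1(H_{\mathrm{per},S_{1,L}}^{\chi})-E_0(H_{\mathrm{per},S_{1,L}}^{\chi})\geq C/L^2$ for the strip $S_{1,L}=\Lambda_{1,L}\times\mathbb{R}^n$ with $\Lambda_{1,L}=(-\tfrac12,\tfrac12)^{d_1}\times(-\tfrac{L}{2},\tfrac{L}{2})^{d_2}$, and since Lemma \ref{sandwich-bound} applies to this strip as well, the structural ingredients carry over verbatim.

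Concretely, I would first redefine the truncated potential by cutting out the impurities far away in the $x_1$-direction:
\begin{equation*}
\widehat{W}_{\omega,R}(x)=f_u\sum_{\substack{i_1\in\mathbb{Z}^{d_1},\,|i_1|>R\\ i_2\in\mathbb{Z}^{d_2}}}\frac{\min\{\omega_i-\omega_{\min},1\}}{(1+|x_1-i_1|)^{\alpha_1}+(1+|x_2-i_2|)^{\alpha_2}},
\end{equation*}
and set $\widetilde{W}_{\omega,R}(x,y)=\widehat{W}_{\omega,R}(x)\chi_G(y)$, $\widetilde{H}_{\omega,R}=H_{\mathrm{per}}+\widetilde{W}_{\omega,R}$. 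The symmetric analog of Lemma \ref{lemma-tools}(iii) then reads $\sup_{x\in\overline{\Lambda_1}}\widehat{W}_R(x)\le c R^{-\alpha_1(1-\gamma)}$, by the same summation-type reduction of \cite[Lemma 3.5]{KW05} applied with the roles of $\alpha_1$ and $\alpha_2$ exchanged.

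Next I would prove the analog of Lemma \ref{lemma-lower-bound-quantum-classical}: with $R=(rL)^{2/\alpha_1(1-\gamma)}$ for $r$ sufficiently large, Temple's inequality applied to $\widetilde{H}_{\omega,R}^{\chi,S_{1,L}}$ with trial function $\psi_{1,L}=L^{-d_2/2}\psi_0|_{S_{1,L}}$ and the gap bound from Lemma \ref{spectral-gap} yield
\begin{equation*}
E_0\bigl(\widetilde{H}_{\omega,R}^{\chi,S_{1,L}}\bigr)\geq E_0+\frac{C}{L}\int_{\Lambda_{1,L}}\widehat{W}_{\omega,R}(x)\,dx.
\end{equation*}
The only change from before is that the $L^{-1}$ prefactor now comes from the volume factor $L^{-d_2}$ times the integral over the length-$L$ direction, which still produces an integral per unit cell in the classical ($x_1$) direction.

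Finally, I would invoke \cite[Lemma 4.7, Remark 4.8]{KW05} with the symmetric index set $\Gamma_L=\{(i_1,i_2)\in\mathbb{Z}^{d_1}\times\mathbb{Z}^{d_2}\mid R<|i_1|\leq 2R,\ |i_2|\leq L/8\}$ to obtain
\begin{equation*}
\frac{C}{L}\int_{\Lambda_{1,L}}\widehat{W}_{\omega,R}(x)\,dx\geq \frac{c_1}{(rL)^2}\frac{1}{\#\Gamma_L}\sum_{i\in\Gamma_L}\min\{\omega_i-\omega_{\min},1\}-\frac{c_2}{L^{\alpha_2(1-\gamma)}}.
\end{equation*}
Here the hypothesis $d_2/2>\gamma_2/(1-\gamma)$ provides $\alpha_2(1-\gamma)>2$, so the error term is absorbed exactly as before. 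Choosing $L\sim (E-E_0)^{-1/2}$, a standard large-deviation bound gives $\mathbb{P}\{E_0(H_\omega^{\chi,S_{1,L}})\leq E\}\leq e^{-c_3\#\Gamma_L}$ with $\#\Gamma_L\gtrsim R^{d_1}\cdot L^{d_2}\sim L^{2\gamma_1/(1-\gamma)+d_2}\sim (E-E_0)^{-\gamma_1/(1-\gamma)-d_2/2}$. Applying Lemma \ref{sandwich-bound} on the strip $S_{1,L}=S_1^2(L)$ yields the desired upper bound on $N(E)$.

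The only place where one must be careful is bookkeeping: the index $k\in\{1,2\}$ in the cuboid family, the swap of subscripts in $\widehat{W}_{\omega,R}$, and the assignment of "classical" versus "quantum" direction must all match up consistently so that the quantum direction gets the spectral-gap contribution $L^{-2}$ and the classical direction supplies the long-range anisotropic decay. Apart from this bookkeeping, no new ideas are required; every analytic lemma used in the proof of Theorem \ref{theorem-upper-bound-q/cl} (Temple's inequality, the summation estimate, the large-deviation bound, Mezincescu boundary conditions) is symmetric under the interchange $(d_1,\alpha_1,\gamma_1)\leftrightarrow(d_2,\alpha_2,\gamma_2)$.
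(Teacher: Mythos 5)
Your proposal is correct and coincides with the paper's intended argument: the paper states this theorem without proof, remarking only that it "can be proven analogously" to Theorem \ref{theorem-upper-bound-q/cl}, and your symmetric swap is exactly that analogue. In particular, enlarging the strip in the quantum direction ($S_{1,L}$ with the gap from Lemma \ref{spectral-gap}), truncating the impurities with $|i_{1}|>R$, $R=(rL)^{2/\alpha_{1}(1-\gamma)}$, applying Temple's inequality with $\psi_{1,L}$, absorbing the error via $\alpha_{2}(1-\gamma)>2$, and counting $\#\Gamma_{L}\sim L^{2\gamma_{1}/(1-\gamma)+d_{2}}$ before invoking Lemma \ref{sandwich-bound} reproduces the exponent $\frac{\gamma_{1}}{1-\gamma}+\frac{d_{2}}{2}$ as required.
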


\subsection{Upper Bound in the Quantum Regime}

In this section, we study the lower bound of the Lifshitz exponent in the quantum regime, that is, we assume $\frac{d_{k}}{2}>\frac{\gamma_{k}}{1-\gamma}$, $k=1,2$.

For any $h>0$, we define
\begin{equation*}
\widehat{W}_{\omega,h}(x)=f_{u}\sum_{i\in\mathbb{Z}^{d}}\min\{\omega_{i}-\omega_{\min},h\}\chi_{F}(x-i),\quad x\in\mathbb{R}^{d},\,\,\omega\in\Omega
\end{equation*}
and set $\widetilde{W}_{\omega,h}(x,y)=\widehat{W}_{\omega,h}(x)\chi_{G}(y)$ for $x\in\mathbb{R}^{d}$, $y\in\mathbb{R}^{n}$ and $\omega\in\Omega$. By assumption $\rm(H2)(ii)$, we can find a constant $f_{u}>0$ and two Borel sets $F\subset\Lambda_{1}$ and $G\subset\mathbb{R}^{n}$ such that $f(x,y)\geq
f_{u}\chi_{F}(x)\chi_{G}(y)$ for all $x\in\mathbb{R}^{d}$ and $y\in\mathbb{R}^{n}$. It then follows that $0\leq\widetilde{W}_{\omega,h}\leq\min\{f_{u}h,W_{\omega}\}$ for any $h>0$ and $\omega\in\Omega$. Let $\widetilde{H}_{\omega,h}=H_{\rm per}+\widetilde{W}_{\omega,h}$ and $\widetilde{H}_{\omega,h}^{\chi,S_{L}}$ be the restriction of $\widetilde{H}_{\omega,h}$ to $L^{2}(S_{L})$ with the Mezincescu boundary condition on $\partial S_{L}$.

Analogous to Lemma \ref{lemma-lower-bound-quantum-classical}, we estimate a lower bound on the lowest eigenvalue of $\widetilde{H}_{\omega,h}^{\chi,S_{L}}$.

\begin{lem}\label{lemma-lower-bound-quantum}
Let $h=\frac{C_{per}}{3f_{u}L^{2}}$. Then, the ground state energy, $E_{0}(\widetilde{H}_{\omega,h}^{\chi,S_{L}})$, of $\widetilde{H}_{\omega,h}^{\chi,S_{L}}$ satisfies
\begin{equation*}
E_{0}(\widetilde{H}_{\omega,h}^{\chi,S_{L}})\geq E_{0}+\frac{C}{L^{d}}\int_{\Lambda_{L}}\widehat{W}_{\omega,h}(x)dx
\end{equation*}
for some $C>0$ and large enough $L$.
\end{lem}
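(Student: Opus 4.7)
The plan is to mirror the proof of Lemma \ref{lemma-lower-bound-quantum-classical}, applying Temple's inequality to $\widetilde{H}_{\omega,h}^{\chi,S_{L}}$ with variational function $\psi_{L}$ from \eqref{ground-states}, but now invoking the cubic-strip spectral gap estimate of Lemma \ref{spectral-gap-1} instead of Lemma \ref{spectral-gap}. The cutoff $h=\frac{C_{\rm per}}{3f_u L^2}$ is engineered precisely so that the uniform bound $\widetilde{W}_{\omega,h}\le f_u h$ is of the same order as one third of the Mezincescu spectral gap $C_{\rm per}/L^2$; this is what makes Temple's hypothesis hold and what eventually allows one to absorb the variance term.

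First I would verify Temple's hypothesis. Using $H_{{\rm per},S_L}^{\chi}\psi_L=E_0\psi_L$, one has $\langle\psi_L,\widetilde{H}_{\omega,h}^{\chi,S_L}\psi_L\rangle=E_0+\langle\psi_L,\widetilde{W}_{\omega,h}\psi_L\rangle$, and the pointwise bound $\widetilde{W}_{\omega,h}\le f_u h=\frac{C_{\rm per}}{3L^2}$ gives
\begin{equation*}
\langle\psi_L,\widetilde{W}_{\omega,h}\psi_L\rangle\le\frac{C_{\rm per}}{3L^2}.
\end{equation*}
Since $\widetilde{W}_{\omega,h}\ge 0$, the min-max principle yields $E_1(\widetilde{H}_{\omega,h}^{\chi,S_L})\ge E_1(H_{{\rm per},S_L}^{\chi})$, which combined with Lemma \ref{spectral-gap-1} gives $E_1(\widetilde{H}_{\omega,h}^{\chi,S_L})\ge E_0+\frac{C_{\rm per}}{L^2}$. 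Consequently,
\begin{equation*}
\langle\psi_L,\widetilde{H}_{\omega,h}^{\chi,S_L}\psi_L\rangle-E_1(\widetilde{H}_{\omega,h}^{\chi,S_L})\le -\frac{2C_{\rm per}}{3L^2}<0,
\end{equation*}
so Temple's inequality is applicable.

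Next I would apply Temple's inequality and estimate the variance term. The variance reduces to $\langle\widetilde{W}_{\omega,h}\psi_L,\widetilde{W}_{\omega,h}\psi_L\rangle-\langle\psi_L,\widetilde{W}_{\omega,h}\psi_L\rangle^2$, and using $\widetilde{W}_{\omega,h}\le f_u h$ one bounds
\begin{equation*}
\langle\widetilde{W}_{\omega,h}\psi_L,\widetilde{W}_{\omega,h}\psi_L\rangle\le f_u h\,\langle\psi_L,\widetilde{W}_{\omega,h}\psi_L\rangle.
\end{equation*}
Dividing by the denominator $E_1-\langle\psi_L,\widetilde{H}_{\omega,h}^{\chi,S_L}\psi_L\rangle\ge \frac{2C_{\rm per}}{3L^2}$ and substituting $f_u h=\frac{C_{\rm per}}{3L^2}$, the subtracted term becomes at most $\frac{1}{2}\langle\psi_L,\widetilde{W}_{\omega,h}\psi_L\rangle$, giving
\begin{equation*}
E_0(\widetilde{H}_{\omega,h}^{\chi,S_L})\ge E_0+\tfrac{1}{2}\langle\psi_L,\widetilde{W}_{\omega,h}\psi_L\rangle.
\end{equation*}

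The final step is to translate this lower bound into the desired integral form. Using \eqref{ground-states} and the product structure $\widetilde{W}_{\omega,h}(x,y)=\widehat{W}_{\omega,h}(x)\chi_G(y)$, Fubini gives
\begin{equation*}
\langle\psi_L,\widetilde{W}_{\omega,h}\psi_L\rangle=\frac{1}{L^d}\int_{S_L}\widehat{W}_{\omega,h}(x)\chi_G(y)\psi_0(x,y)^2\,dx\,dy,
\end{equation*}
and Lemma \ref{lemma-ground-state} yields $\psi_0(x,y)^2\ge C_1^2\bar\psi_0(y)^2$, so the last integral is bounded below by $\frac{C_1^2}{L^d}\bigl(\int_{\Lambda_L}\widehat{W}_{\omega,h}(x)dx\bigr)\bigl(\int_{\mathbb{R}^n}\chi_G(y)\bar\psi_0(y)^2 dy\bigr)$. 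The remaining $y$-integral is a finite positive constant ($G$ has positive measure by $\rm(H5)$, and $\bar\psi_0>0$ pointwise since $\psi_0>0$), so the lemma follows. The main technical point is simply the calibration of $h$ to the spectral gap: once $f_u h\cdot L^2$ is made small compared to $C_{\rm per}$, both Temple's hypothesis and the variance absorption work simultaneously; no additional obstacle arises beyond the bookkeeping.
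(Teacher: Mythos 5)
Your proposal is correct and follows essentially the same route as the paper: Temple's inequality with trial function $\psi_L$, the cube-strip gap estimate of Lemma \ref{spectral-gap-1} together with the calibration $f_u h = C_{\rm per}/(3L^2)$ to verify the hypothesis and absorb the variance term, and Lemma \ref{lemma-ground-state} to convert $\langle\psi_L,\widetilde{W}_{\omega,h}\psi_L\rangle$ into the integral bound. The paper merely sketches these steps by referring back to Lemma \ref{lemma-lower-bound-quantum-classical}; you have filled them in exactly as intended.
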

\begin{proof}
Since $E_{1}(\widetilde{H}_{\omega,h}^{\chi,S_{L}})\geq E_{1}(H_{\rm per}^{\chi,S_{L}})$ and $\langle\psi_{L},\widetilde{H}_{\omega,h}^{\chi,S_{L}}\psi_{L}\rangle-E_{1}(H_{\rm per}^{\chi,S_{L}})\leq-\frac{2C_{\rm per}}{3L^{2}}$ by Lemma
\ref{spectral-gap-1}, we have $\big\langle\psi_{L},\widetilde{H}_{\omega,h}^{\chi,S_{L}}\psi_{L}\big\rangle-E_{1}(\widetilde{H}_{\omega,h}^{\chi,S_{L}})\leq-\frac{2C_{per}}{3L^{2}}<0$.
It then follows from Temple's inequality with variational function $\psi_{L}$ that $E_{0}(\widetilde{H}_{\omega,h}^{\chi,S_{L}})\geq E_{0}+\frac{1}{2}\big\langle\psi_{L},\widetilde{W}_{\omega,h}\psi_{L}\big\rangle$, which leads to the result. We refer to Lemma \ref{lemma-lower-bound-quantum-classical} for detailed arguments.
\end{proof}

We proceed to the main result in this subsection.

\begin{thm}
The Lifshitz exponent in the quantum regime is bounded from below by $\frac{d}{2}$, i.e.,
\begin{equation*}
\limsup_{E\downarrow E_{0}}\frac{\ln|\ln N(E)|}{\ln(E-E_{0})}\leq-\frac{d}{2}.
\end{equation*}
\end{thm}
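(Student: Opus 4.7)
The plan is to mirror the strategy already carried out in the quantum-classical regime (Theorem \ref{theorem-upper-bound-q/cl}), but to use the cube strips $(\Lambda_L,S_L)$ together with Lemma \ref{lemma-lower-bound-quantum} in place of $(\Lambda_{L,1},S_{L,1})$ and Lemma \ref{lemma-lower-bound-quantum-classical}. The starting point is the right-hand inequality of Lemma \ref{sandwich-bound} applied to $(\Lambda_L,S_L)$, which gives
\begin{equation*}
N(E)\;\leq\;\frac{1}{|\Lambda_L|}\,N\bigl(H_{\mathrm{per},S_L}^{\chi},E\bigr)\,\mathbb{P}\Bigl\{\omega\in\Omega\,\bigl|\,E_0(H_{\omega,S_L}^{\chi})\leq E\Bigr\}.
\end{equation*}
The prefactor has at most polynomial growth in $L$ (by the van-Hove behavior of $N(H_{\mathrm{per},S_L}^{\chi},\cdot)$ and $|\Lambda_L|=L^d$), so it will be absorbed into the logarithm at the end.

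Next I would estimate the probability on the right. Because $\widetilde{W}_{\omega,h}\leq W_\omega$, one has $E_0(H_{\omega,S_L}^{\chi})\geq E_0(\widetilde{H}_{\omega,h}^{\chi,S_L})$, and then Lemma \ref{lemma-lower-bound-quantum} with $h=C_{\mathrm{per}}/(3f_u L^2)$ yields
\begin{equation*}
E_0(H_{\omega,S_L}^{\chi})\;\geq\;E_0+\frac{C}{L^d}\int_{\Lambda_L}\widehat{W}_{\omega,h}(x)\,dx
\;\geq\;E_0+\frac{C f_u|F|}{L^d}\sum_{i\in\widetilde{\Gamma}_L}\min\{\omega_i-\omega_{\min},h\},
\end{equation*}
where $\widetilde{\Gamma}_L=\{i\in\mathbb{Z}^d:i+F\subset\Lambda_L\}$, so $\#\widetilde{\Gamma}_L\asymp L^d$. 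Setting $\xi_i(\omega)=\min\{\omega_i-\omega_{\min},h\}/h$ (so that $\xi_i\in[0,1]$ and, by (H6), $\mathbb{E}\xi_i\geq c_0>0$ uniformly in $L$ for $L$ large, since $h\downarrow 0$ but $\mathbb{P}_0\{[\omega_{\min},\omega_{\min}+h)\}\geq Ch^N$ is not what we need---instead, $\mathbb{E}\xi_i\to$ a positive quantity as $h\to 0$ by dominated convergence once we normalize by $h$; more carefully, since the distribution is nondegenerate, for $h$ sufficiently small $\mathbb{E}\xi_i\geq c_0>0$), one rewrites
\begin{equation*}
\mathbb{P}\Bigl\{E_0(H_{\omega,S_L}^{\chi})\leq E\Bigr\}\;\leq\;\mathbb{P}\Bigl\{\tfrac{1}{\#\widetilde{\Gamma}_L}\sum_{i\in\widetilde{\Gamma}_L}\xi_i(\omega)\leq \tfrac{L^{d+2}(E-E_0)}{C C_{\mathrm{per}}|F|\,\#\widetilde{\Gamma}_L}\Bigr\}.
\end{equation*}

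The choice of scale is $L=\bigl(\kappa/(E-E_0)\bigr)^{1/2}$ with $\kappa>0$ small, so that $L^{d+2}(E-E_0)/\#\widetilde{\Gamma}_L\leq c_0/2$ for all $E$ close enough to $E_0$. The right-hand side above is then a large-deviation event for the i.i.d.\ bounded variables $\{\xi_i\}_{i\in\widetilde{\Gamma}_L}$ whose common mean exceeds $c_0$, so Cram\'er's theorem (or the Chernoff-style argument used in \cite[Lemma 6.4]{Ki08}) produces a constant $c>0$, independent of $L$, with
\begin{equation*}
\mathbb{P}\Bigl\{E_0(H_{\omega,S_L}^{\chi})\leq E\Bigr\}\;\leq\;e^{-c\,\#\widetilde{\Gamma}_L}\;\leq\;\exp\!\bigl(-c'(E-E_0)^{-d/2}\bigr)
\end{equation*}
for some $c'>0$ and all $E$ sufficiently close to $E_0$. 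Combining this with the sandwiching bound, taking logarithms twice, and observing that the polynomial prefactor contributes only $O(\ln\ln(1/(E-E_0)))$ to $\ln|\ln N(E)|$, gives $\limsup_{E\downarrow E_0}\frac{\ln|\ln N(E)|}{\ln(E-E_0)}\leq-\tfrac{d}{2}$, which is the claim.

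The main obstacle I anticipate is the justification that the mean $\mathbb{E}\xi_i=h^{-1}\mathbb{E}\min\{\omega_i-\omega_{\min},h\}$ stays bounded below by a positive constant $c_0$ as $h\to 0$, so that the above really is a large-deviation event rather than a typical event. This follows because $\operatorname{supp}\mathbb{P}_0$ contains at least two points, hence $\mathbb{P}_0\{\omega_i\geq\omega_{\min}+h\}\to 1$ as $h\downarrow 0$, which forces $\mathbb{E}\xi_i\to 1$; fixing any $c_0\in(0,1)$ gives the required lower bound for $L$ large. Everything else is a routine adaptation of the argument in Theorem \ref{theorem-upper-bound-q/cl}, now using the isotropic spectral gap Lemma \ref{spectral-gap-1} rather than the anisotropic Lemma \ref{spectral-gap}.
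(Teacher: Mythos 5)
Your proposal follows the same route as the paper: the Mezincescu sandwiching bound of Lemma \ref{sandwich-bound} on the cube strips $(\Lambda_L,S_L)$, the Temple-inequality bound of Lemma \ref{lemma-lower-bound-quantum} with $h=C_{\rm per}/(3f_uL^{2})$, the scale $L\sim(E-E_0)^{-1/2}$, and a large-deviation estimate producing $\exp\{-c(E-E_0)^{-d/2}\}$, absorbed through Lemma \ref{sandwich-bound} after two logarithms. The only real difference is the shape of the large-deviation event: the paper first discards the size of $\min\{\omega_i-\omega_{\min},h\}$ via $\sum_i\min\{\omega_i-\omega_{\min},h\}\geq h\,\#\{i:\omega_i-\omega_{\min}\geq h\}$ and works with indicator variables (the event that the fraction of sites with $\omega_i-\omega_{\min}<h$ exceeds a fixed $r$ larger than its mean), whereas you keep the normalized truncations $\xi_i=\min\{\omega_i-\omega_{\min},h\}/h\in[0,1]$ and ask that their empirical mean be small.

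Your variant is workable, but two points need repair. First, your justification of the uniform mean bound is wrong as stated: $\mathbb{P}_0\{\omega_i\geq\omega_{\min}+h\}$ tends, as $h\downarrow0$, to $\mathbb{P}_0\{\omega>\omega_{\min}\}=1-\mathbb{P}_0(\{\omega_{\min}\})$, not to $1$; if $\mathbb{P}_0$ has an atom at $\omega_{\min}$ (e.g.\ the Bernoulli alloy, which is allowed under $\rm(H2)$), then $\mathbb{E}\xi_i$ does not tend to $1$ and ``fixing any $c_0\in(0,1)$'' fails. What is true, and all you need, is $\mathbb{E}\xi_i\geq\mathbb{P}_0\{\omega\geq\omega_{\min}+h\}\geq c_0>0$ for all sufficiently small $h$, where $c_0$ is any number strictly below $\mathbb{P}_0\{\omega>\omega_{\min}\}$, which is positive precisely because ${\rm supp}\,\mathbb{P}_0$ contains at least two points; so $c_0$ must be chosen below this quantity, not arbitrarily in $(0,1)$. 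Second, since the law of $\xi_i$ depends on $h$ and hence on $E$, Cram\'er's theorem for a fixed distribution is not the right tool; you should invoke a bound uniform over laws on $[0,1]$ with mean at least $c_0$ (Hoeffding/Chernoff), which indeed gives $\mathbb{P}\big\{\frac{1}{\#\widetilde{\Gamma}_L}\sum_{i\in\widetilde{\Gamma}_L}\xi_i\leq c_0/2\big\}\leq e^{-c\,\#\widetilde{\Gamma}_L}$ with $c=c(c_0)$ independent of $L$. The paper's indicator reduction sidesteps both issues at the cost of the extra counting step. With these two fixes, the remaining ingredients of your argument (the set $\widetilde{\Gamma}_L=\{i:i+F\subset\Lambda_L\}$ of cardinality of order $L^{d}$, the polynomial prefactor $N(H_{{\rm per},S_L}^{\chi},E)/|\Lambda_L|$, and the use of Lemma \ref{spectral-gap-1} inside Lemma \ref{lemma-lower-bound-quantum}) go through and yield the stated bound.
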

\begin{proof}
Let $h=\frac{C_{per}}{3f_{u}L^{2}}$ so that Lemma \ref{lemma-lower-bound-quantum} holds. We first claim that \begin{equation}\label{estimate-lower-bound-quantum-1}
E_{0}(\widetilde{H}_{\omega,h}^{S_{L},\chi})\geq E_{0}+\frac{Cf_{u}|F|h}{L^{d}}\#\Big\{i\in\mathbb{Z}^{d}\cap\Lambda_{L}\Big|\omega_{i}-\omega_{\min}\geq h\Big\}
\end{equation}
for $L\in\mathbb{N}$, where $|F|$ is the $d$-dimensional Lebesgue measure of $F$ and $\#\{\cdot\}$ is the cardinal number of the set $\{\cdot\}$. Indeed, we calculate
\begin{equation*}
\begin{split}
\int_{\Lambda_{L}}\widehat{W}_{\omega,h}(x)dx\geq f_{u}\int_{\Lambda_{L}}\sum_{i\in\mathbb{Z}^{d}\cap\Lambda_{L}}\min\{\omega_{i}-\omega_{\min},h\}\chi_{F}(x-i)dx=f_{u}|F|\sum_{i\in\mathbb{Z}^{d}\cap\Lambda_{L}}\min\{\omega_{i}-\omega_{\min},h\},
\end{split}
\end{equation*}
where we have used the fact that $\int_{\Lambda_{L}}\chi_{F}(x-i)dx=|F|$ for all $i\in\mathbb{Z}^{d}\cap\Lambda_{L}$. \eqref{estimate-lower-bound-quantum-1} then follows from
\begin{equation*}
\sum_{i\in\mathbb{Z}^{d}\cap\Lambda_{L}}\min\{\omega_{i}-\omega_{\min},h\}\geq h\#\Big\{i\in\mathbb{Z}^{d}\cap\Lambda_{L}\Big|\omega_{i}-\omega_{\min}\geq
h\Big\}.
\end{equation*}

Considering \eqref{estimate-lower-bound-quantum-1} and the fact $H_{\omega}^{\chi,S_{L}}\geq\widetilde{H}_{\omega,h}^{\chi,S_{L}}$, hence $E_{0}(H_{\omega}^{S_{L},\chi})\geq E_{0}(\widetilde{H}_{\omega,h}^{S_{L},\chi})$, we obtain
\begin{equation*}
\begin{split}
\mathbb{P}\Big\{\omega\in\Omega\Big|E_{0}(H_{\omega}^{\chi,S_{L}})\leq E\Big\}&\leq\mathbb{P}\bigg\{\omega\in\Omega\bigg|\frac{1}{L^{d}}\#\Big\{i\in\mathbb{Z}^{d}\cap\Lambda_{L}\Big|\omega_{i}-\omega_{\min}\geq
h\Big\}\leq\frac{E-E_{0}}{Cf_{u}|F|h}\bigg\}\\
&=\mathbb{P}\bigg\{\omega\in\Omega\bigg|\frac{1}{L^{d}}\#\Big\{i\in\mathbb{Z}^{d}\cap\Lambda_{L}\Big|\omega_{i}-\omega_{\min}< h\Big\}>\frac{E-E_{0}}{Cf_{u}|F|h}\bigg\}.
\end{split}
\end{equation*}
Let $\xi_{i}$ be the characteristic function of the set $\big\{\omega\in\Omega\big|\omega_{i}-\omega_{\min}< h\big\}$ for $i\in\mathbb{Z}^{d}\cap\Lambda_{L}$. It's easy to see that $\{\xi_{i}\}_{i\in\mathbb{Z}^{d}}$ are nonnegative i.i.d random variables with expectation $\mathbb{E}(\xi)\in(0,1)$ for small $h$, since we have assumed that $\text{supp}\mathbb{P}_{0}$ contains at least two points, where $\xi$ is the general representation of $\{\xi_{i}\}_{i\in\mathbb{Z}^{d}}$. Pick any $r\in(\mathbb{E}(\xi),1)$ and set $L=\sqrt{\frac{C|F|C_{\rm per}r}{3}}(E-E_{0})^{-\frac{1}{2}}$ for $E>E_{0}$. We have $h=\frac{E-E_{0}}{Cf_{u}|F|r}$ and
\begin{equation}\label{large-deviation-quan}
\mathbb{P}\Big\{\omega\in\Omega\Big|E_{0}(H_{\omega}^{\chi,S_{L}})\leq E\Big\}\leq\mathbb{P}\bigg\{\omega\in\Omega\bigg|\frac{1}{L^{d}}\sum_{i\in\mathbb{Z}^{d}\cap\Lambda_{L}}\xi_{i}(\omega)>r\bigg\},
\end{equation}
which is the probability of a large deviation event. By picking $E$ close to $E_{0}$ so that $L$ is large and $h$ is small, there's some constant $C_{1}>0$ so that the probability in the right-hand side of \eqref{large-deviation-quan} is bounded from above by
\begin{equation*}
e^{-C_{1}L^{d}}=e^{-3^{-d/2}C_{1}(C|F|C_{\rm per}r)^{d/2}(E-E_{0})^{-d/2}}.
\end{equation*}
Considering Lemma \ref{sandwich-bound}, we obtain the result.
\end{proof}


\subsection{Upper Bound in the Classical Regime}

In this section, we study the lower bound of the Lifshitz exponent in the classical regime, that is, we assume $\frac{d_{k}}{2}\leq\frac{\gamma_{k}}{1-\gamma}$, $k=1,2$.

Let
$\beta_{k}=\frac{2}{d_{k}}\frac{\gamma_{k}}{1-\gamma}=\frac{2}{\alpha_{k}(1-\gamma)}$, $k=1,2$. We define
\begin{equation*}
\widehat{W}_{\omega,L}(x)=f_{u}\sum_{\substack{i_{1}\in\mathbb{Z}^{d_{1}},|i_{1}|>L^{\beta_{1}}\\i_{2}\in\mathbb{Z}^{d_{2}},|i_{2}|>L^{\beta_{2}}}}\frac{\min\{\omega_{i}-\omega_{\min},1\}}{|x_{1}-i_{1}|^{\alpha_{1}}+|x_{2}-i_{2}|^{\alpha_{2}}},\quad
x\in\mathbb{R}^{d},\,\,\omega\in\Omega
\end{equation*}
and set $\widetilde{W}_{\omega,L}(x,y)=\widehat{W}_{\omega,L}(x)\chi_{G}(y)$ for $x\in\mathbb{R}^{d}$, $y\in\mathbb{R}^{n}$ and $\omega\in\Omega$. By assumption $\rm(H5)$, $0\leq\widetilde{W}_{\omega,L}\leq W_{\omega}$ for all $L\geq1$ and $\omega\in\Omega$. Let $\widetilde{H}_{\omega,L}=H_{\rm
per}+\widetilde{W}_{\omega,L}$ and $\widetilde{H}_{\omega,L}^{\chi,S_{L}}$ be the restriction of $\widetilde{H}_{\omega,L}$ to $L^{2}(S_{L})$ with the Mezincescu boundary condition on $\partial S_{L}$.

We estimate an upper bound on the lowest eigenvalue of $\widetilde{H}_{\omega,L}^{\chi,S_{1}}$.

\begin{lem}\label{lemma-lower-bound-classical}
The ground state energy, $E_{0}(\widetilde{H}_{\omega,L}^{\chi,S_{1}})$, of $\widetilde{H}_{\omega,L}^{\chi,S_{1}}$ satisfies
\begin{equation*}
E_{0}(\widetilde{H}_{\omega,L}^{\chi,S_{1}})\geq E_{0}+C\int_{\Lambda_{1}}\widehat{W}_{\omega,L}(x)dx
\end{equation*}
for some $C>0$ and large enough $L$.
\end{lem}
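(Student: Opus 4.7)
The plan is to apply Temple's inequality with the variational function $\phi:=\psi_0|_{S_1}$, in direct analogy with the proofs of Lemmas \ref{lemma-lower-bound-quantum-classical} and \ref{lemma-lower-bound-quantum}. Two ingredients are needed. First, I claim a spectral gap for the fixed operator $H_{\mathrm{per},S_1}^\chi$: by (H4) and Lemma \ref{H-per-property}, $E_0(H_{\mathrm{per},S_1}^\chi)=E_0<0$, and by Weyl's theorem combined with (H1) and (H3) the essential spectrum of $H_{\mathrm{per},S_1}^\chi$ is contained in $[0,\infty)$. Hence $E_0$ is an isolated eigenvalue of finite multiplicity, simple by positivity of $\psi_0$, and there is an absolute constant $C_0>0$ (independent of $\omega$ and $L$) with $E_1(H_{\mathrm{per},S_1}^\chi)-E_0\geq C_0$. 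The structural difference from the previous subsections is that here the gap is of order $1$, not of order $L^{-2}$, because the strip $S_1$ has fixed width.

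Second, I would establish a uniform decay estimate $\sup_{x\in\overline{\Lambda}_1}\widehat{W}_{\omega,L}(x)\leq cL^{-2}$ valid for all $\omega\in\Omega$ and $L$ large. Since $\min\{\omega_i-\omega_{\min},1\}\leq 1$, and since for $x\in\overline{\Lambda}_1$ and $|i_k|>L^{\beta_k}\geq 1$ one has $|x_k-i_k|\geq |i_k|/2$, it suffices to bound the deterministic sum $\sum_{|i_1|>L^{\beta_1},\,|i_2|>L^{\beta_2}}(|i_1|^{\alpha_1}+|i_2|^{\alpha_2})^{-1}$. I would apply the AM--GM-type inequality $(a+b)^{-1}\leq C\, a^{-\theta}b^{-(1-\theta)}$ with some $\theta\in(\gamma_1,1-\gamma_2)$; this interval is nonempty because $\gamma<1$. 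The double sum then factorises into two one-dimensional tail sums, and using the identities $\beta_k\alpha_k=2/(1-\gamma)$ and $\beta_k d_k=2\gamma_k/(1-\gamma)$, the overall exponent works out to $2\gamma/(1-\gamma)-2/(1-\gamma)=-2$, as required. This is the summation analogue of \cite[Lemma 3.5]{KW05} and of Lemma \ref{lemma-tools}(iii).

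With these ingredients the Temple argument proceeds as in the earlier subsections. From $H_{\mathrm{per},S_1}^\chi\phi=E_0\phi$ together with the decay bound, $\langle\phi,\widetilde{H}_{\omega,L}^{\chi,S_1}\phi\rangle-E_1(H_{\mathrm{per},S_1}^\chi)\leq cL^{-2}-C_0\leq -C_0/2$ for $L$ large; since $E_1(\widetilde{H}_{\omega,L}^{\chi,S_1})\geq E_1(H_{\mathrm{per},S_1}^\chi)$, Temple's hypothesis is satisfied. Using $\|\widetilde{W}_{\omega,L}\phi\|^2\leq(\sup\widetilde{W}_{\omega,L})\langle\phi,\widetilde{W}_{\omega,L}\phi\rangle\leq cL^{-2}\langle\phi,\widetilde{W}_{\omega,L}\phi\rangle$, Temple's inequality then yields $E_0(\widetilde{H}_{\omega,L}^{\chi,S_1})\geq E_0+\tfrac{1}{2}\langle\phi,\widetilde{W}_{\omega,L}\phi\rangle$ for $L$ large. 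Converting via Lemma \ref{lemma-ground-state} gives $\langle\phi,\widetilde{W}_{\omega,L}\phi\rangle\geq C\int_{\Lambda_1}\widehat{W}_{\omega,L}(x)\,dx\cdot\int_{\mathbb{R}^n}\chi_G(y)\bar\psi_0(y)^2\,dy$, where the $y$-integral is a strictly positive finite constant since $G$ has positive finite Lebesgue measure and $\bar\psi_0\in L^2(\mathbb{R}^n)$. The main technical obstacle is the decay estimate: one must verify that the double tail sum decays at precisely rate $L^{-2}$, matching the order of the spectral gap, so that the error term produced by Temple's subtraction can be absorbed into half of the leading term; all other steps are routine adaptations of the earlier Temple arguments.
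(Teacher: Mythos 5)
Your proof is correct and follows essentially the same route as the paper: Temple's inequality with the trial function $\psi_{1}=\psi_{0}|_{S_{1}}$, an $O(1)$ spectral gap for the fixed strip $S_{1}$, a uniform smallness bound on the truncated potential, and the conversion to $\int_{\Lambda_{1}}\widehat{W}_{\omega,L}(x)\,dx$ via Lemma \ref{lemma-ground-state}. The only differences are cosmetic: the paper invokes Lemma \ref{spectral-gap-1} for the gap and asserts $\sup\widetilde{W}_{\omega,L}\leq C_{\rm per}/3$ without proof, whereas you derive the (stronger) $O(L^{-2})$ decay explicitly; and since the gap here is of order one, only smallness of the supremum, not the precise $L^{-2}$ rate ``matching the gap'', is actually needed.
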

\begin{proof}
It's not difficult to see that $\sup_{x\in\overline{\Lambda}_{1},y\in\mathbb{R}^{n}}\widetilde{W}_{\omega,L}(x,y)\leq\frac{C_{per}}{3}$ for large enough $L$, which together with Lemma \ref{spectral-gap-1}, implies that $\big\langle\psi_{1},\widetilde{H}_{\omega,L}^{\chi,S_{1}}\psi_{1}\big\rangle-E_{1}(\widetilde{H}_{\omega,L}^{\chi,S_{1}})\leq-\frac{2C_{per}}{3}<0$,
where $\psi_{1}$ is the ground state of $H_{{\rm per},S_{1}}^{\chi}$ defined in \eqref{ground-states}. Applying Temple's inequality with variational function $\psi_{1}$ to the self-adjoint operator $\widetilde{H}_{\omega,L}^{\chi,S_{1}}$, we obtain $E_{0}(\widetilde{H}_{\omega,L}^{\chi,S_{1}})\geq
E_{0}+\frac{1}{2}\big\langle\psi_{1},\widetilde{W}_{\omega,L}\psi_{1}\big\rangle$. The lemma then follows. We refer to Lemma
\ref{lemma-lower-bound-quantum-classical} for detailed arguments.
\end{proof}

The main result in this subsection is stated as follows.

\begin{thm}
The Lifshitz exponent in the classical regime is bounded from below by $\frac{\gamma}{1-\gamma}$, i.e.,
\begin{equation*}
\limsup_{E\downarrow E_{0}}\frac{\ln|\ln N(E)|}{\ln(E-E_{0})}\leq-\frac{\gamma}{1-\gamma}.
\end{equation*}
\end{thm}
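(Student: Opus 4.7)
The plan is to follow the template of Theorem~\ref{theorem-upper-bound-q/cl} and the quantum-regime upper bound, but now working on the fixed unit strip $S_{1}$ together with the truncated operator $\widetilde{H}_{\omega,L}^{\chi,S_{1}}$ whose bottom eigenvalue is controlled by Lemma~\ref{lemma-lower-bound-classical}. Since $W_{\omega}\geq\widetilde{W}_{\omega,L}$ implies $H_{\omega,S_{1}}^{\chi}\geq\widetilde{H}_{\omega,L}^{\chi,S_{1}}$, combining Lemma~\ref{sandwich-bound} applied to $(\Lambda_{1},S_{1})$ with Lemma~\ref{lemma-lower-bound-classical} reduces matters to showing
\begin{equation*}
\mathbb{P}\biggl\{\int_{\Lambda_{1}}\widehat{W}_{\omega,L}(x)\,dx\leq\tfrac{E-E_{0}}{C}\biggr\}\leq e^{-c(E-E_{0})^{-\gamma/(1-\gamma)}}
\end{equation*}
for a suitable choice of $L=L(E)$; the deterministic factor $N(H_{{\rm per},S_{1}}^{\chi},E)$ appearing in the sandwich bound grows at most polynomially in $(E-E_{0})^{-1}$ and does not affect the double-log scale.

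The key step is to pass from the integral to a sum over the annular lattice set
\begin{equation*}
\Gamma_{L}=\bigl\{(i_{1},i_{2})\in\mathbb{Z}^{d_{1}}\times\mathbb{Z}^{d_{2}}:L^{\beta_{k}}<|i_{k}|\leq 2L^{\beta_{k}},\,k=1,2\bigr\}.
\end{equation*}
The identity $\alpha_{k}\beta_{k}=\tfrac{2}{1-\gamma}$, which holds simultaneously for $k=1,2$, makes both denominator terms $|x_{k}-i_{k}|^{\alpha_{k}}$ of common order $L^{2/(1-\gamma)}$ whenever $x\in\Lambda_{1}$ and $i\in\Gamma_{L}$. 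Hence
\begin{equation*}
\int_{\Lambda_{1}}\widehat{W}_{\omega,L}(x)\,dx\geq cL^{-2/(1-\gamma)}\sum_{i\in\Gamma_{L}}\min\{\omega_{i}-\omega_{\min},1\},
\end{equation*}
while the annulus has cardinality $\#\Gamma_{L}\sim L^{d_{1}\beta_{1}+d_{2}\beta_{2}}=L^{2\gamma/(1-\gamma)}$, so that $L^{2/(1-\gamma)}/\#\Gamma_{L}\sim L^{2}$.

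Setting $\xi_{i}(\omega)=\min\{\omega_{i}-\omega_{\min},1\}$ and choosing $L\sim(E-E_{0})^{-1/2}$ with the constants tuned so that $L^{2}(E-E_{0})/(cC)$ equals some fixed $r\in(0,\mathbb{E}(\xi))$, the event above becomes the standard large deviation event $\bigl\{\tfrac{1}{\#\Gamma_{L}}\sum_{i\in\Gamma_{L}}\xi_{i}\leq r\bigr\}$, and Cramer's theorem (as used in the preceding upper-bound proofs via \cite{Ki08,DZ10}) yields the required bound $e^{-c'\#\Gamma_{L}}=e^{-c''(E-E_{0})^{-\gamma/(1-\gamma)}}$. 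The main obstacle is purely the arithmetic of matching scales: the annulus widths $\beta_{k}=2/(\alpha_{k}(1-\gamma))$ and the energy scale $L\sim(E-E_{0})^{-1/2}$ must be chosen so that the two coordinate contributions balance at the common size $L^{2/(1-\gamma)}$, the cardinality $\#\Gamma_{L}$ attains exactly the rate $(E-E_{0})^{-\gamma/(1-\gamma)}$, and the random mean is simultaneously driven below $\mathbb{E}(\xi)$; the identity $\alpha_{k}\beta_{k}=2/(1-\gamma)$ is what renders all three requirements compatible.
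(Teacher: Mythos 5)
Your proposal is correct and follows essentially the same route as the paper: Temple's inequality via Lemma \ref{lemma-lower-bound-classical} on the fixed strip $S_{1}$, reduction of the integral to a sum over an annular lattice set of cardinality $\sim L^{2\gamma/(1-\gamma)}$ using $\alpha_{k}\beta_{k}=2/(1-\gamma)$, the choice $L\sim(E-E_{0})^{-1/2}$, and a Cram\'er large-deviation bound combined with Lemma \ref{sandwich-bound}. The only differences are immaterial constants (the paper takes the annulus $2L^{\beta_{k}}<|i_{k}|\leq4L^{\beta_{k}}$ instead of yours), so nothing further is needed.
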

\begin{proof}
We first claim that there is a constant $C>0$ (independent of $\omega$ and $L$) such that
\begin{equation}\label{estimate-lower-bound-classic-1}
\int_{\Lambda_{1}}\widehat{W}_{\omega,L}(x)dx\geq\frac{C}{L^{2}}\frac{1}{\#\Gamma_{L}}\sum_{i\in\Gamma_{L}}\min\{\omega_{i}-\omega_{\min},1\}
\end{equation}
for large enough $L$, where $\Gamma_{L}=\big\{i=(i_{1},i_{2})\in\mathbb{Z}^{d_{1}}\times\mathbb{Z}^{d_{2}}\big|2L^{\beta_{k}}<|i_{k}|\leq4L^{\beta_{k}},\,\,k=1,2\big\}$
and $\#\Gamma_{L}$ is the cardinal number of $\Gamma_{L}$. Indeed, by neglecting a positive term, we have
\begin{equation*}
\int_{\Lambda_{1}}\widehat{W}_{\omega,L}(x)dx\geq
f_{u}\sum_{i\in\Gamma_{L}}\min\{\omega_{i}-\omega_{\min},1\}\int_{\Lambda_{1}}\frac{1}{|x_{1}-i_{1}|^{\alpha_{1}}+|x_{2}-i_{2}|^{\alpha_{2}}}dx.
\end{equation*}
For the integral on the right-hand side, we have
\begin{equation*}
\int_{\Lambda_{1}}\frac{1}{|x_{1}-i_{1}|^{\alpha_{1}}+|x_{2}-i_{2}|^{\alpha_{2}}}dx\geq\frac{1}{5^{\alpha_{1}}L^{\alpha_{1}\beta_{1}}+5^{\alpha_{2}}L^{\alpha_{2}\beta_{2}}}
\end{equation*}
for all $i\in\Gamma_{L}$. Since $\beta_{k}=\frac{2}{d_{k}}\frac{\gamma_{k}}{1-\gamma}=\frac{2}{\alpha_{k}(1-\gamma)}$, $k=1,2$ and $\#\Gamma_{L}\geq
C_{u}L^{\beta_{1}d_{1}}L^{\beta_{2}d_{2}}=C_{u}L^{2\gamma/(1-\gamma)}$ for some $C_{u}>0$, we obtain
\eqref{estimate-lower-bound-classic-1}.

Using Lemma \ref{lemma-lower-bound-classical}, \eqref{estimate-lower-bound-classic-1} and the fact $E_{0}(H_{\omega}^{\chi,S_{1}})\geq E_{0}(\widetilde{H}_{\omega,L}^{\chi,S_{1}})$, we obtain
\begin{equation}\label{inequality-2}
\begin{split}
\mathbb{P}\Big\{\omega\in\Omega\Big|E_{0}(H_{\omega}^{S_{1},\chi})\leq
E\Big\}\leq\mathbb{P}\bigg\{\omega\in\Omega\bigg|\frac{1}{\#\Gamma_{L}}\sum_{i\in\Gamma_{L}}\xi_{i}(\omega)\leq\frac{L^{2}(E-E_{0})}{C}\bigg\},
\end{split}
\end{equation}
where $\xi_{i}(\omega)=\min\{\omega_{i}-\omega_{\min},1\}$ for $i\in\Gamma_{L}$. Obviously, $\{\xi_{i}\}_{i\in\Gamma_{L}}$ are i.i.d random variables with expectation $\mathbb{E}(\xi)\in(0,1)$, where $\xi$ is the general representation of $\{\xi_{i}\}_{i\in\Gamma_{L}}$. Fix any $r\in(0,\mathbb{E}(\xi))$ and let $L=\sqrt{Cr}(E-E_{0})^{-\frac{1}{2}}$ for $E>E_{0}$. Hence, whenever $E$ is close to $E_{0}$, $L$ is large. Large deviation argument applied to the probability on the right-hand side of \eqref{inequality-2} leads to
\begin{equation*}
\mathbb{P}\big\{\omega\in\Omega\big|E_{0}(H_{\omega}^{\chi,S_{1}})\leq E\big\}\leq e^{-C_{2}\#\Gamma_{L}}\leq e^{-C_{2}C_{u}(Cr)^{\frac{\gamma}{1-\gamma}}(E-E_{0})^{-\frac{\gamma}{1-\gamma}}}
\end{equation*}
for some $C_{2}>0$, which together with Lemma \ref{sandwich-bound}, gives the result.
\end{proof}


\appendix

\section{Neumann Laplacian on Cuboids}\label{sec-neumann-laplacian}

Let $\Lambda=\prod_{i=1}^{d}(a_{i},b_{i})$ be an open cuboid in $\mathbb{R}^{d}$ and $\partial\Lambda$ be its boundary. Let
\begin{equation*}
\Gamma_{i}(a_{i})=\partial\Lambda\cap\{x_{i}=a_{i}\},\quad\Gamma_{i}(b_{i})=\partial\Lambda\cap\{x_{i}=b_{i}\},\quad i=1,\dots,d
\end{equation*}
be the surfaces of $\Lambda$. We consider the following homogeneous Neumann problem of $-\Delta$ on $\overline{\Lambda}$:
\begin{equation}\label{neumann-problem}
\left\{ \begin{aligned}
&-\Delta\phi=E\phi\quad\text{in}\quad\Lambda\\
&\partial_{x_{i}}\phi|_{\Gamma_{i}(a_{i})}=0=\partial_{x_{i}}\phi|_{\Gamma_{i}(b_{i})},\,\,i=1,\dots,d.
\end{aligned} \right.
\end{equation}
Solving the problem \eqref{neumann-problem} is often rephrased as to find eigenvalues and corresponding eigenfunctions of the Neumann Laplacian $-\Delta^{N}_{\Lambda}$. Solutions to \eqref{neumann-problem} (or eigenvalues and eigenfunctions of $-\Delta^{N}_{\Lambda}$) are given by
\begin{equation*}
\begin{split}
E_{M}&=\pi^{2}\sum_{i=1}^{d}\frac{M_{i}^{2}}{(b_{i}-a_{i})^{2}},\\
\phi_{M}(x)&=2^{d/2}\bigg(\prod_{i=1}^{d}(b_{i}-a_{i})\bigg)^{-1/2}\prod_{i=1}^{d}\cos\bigg(\frac{M_{i}\pi(x_{i}-a_{i})}{b_{i}-a_{i}}\bigg),\quad x=(x_{1},\dots,x_{d})\in\Lambda
\end{split}
\end{equation*}
with $\int_{\Lambda}|\phi_{M}(x)|^{2}dx=1$ for $M=(M_{1},\dots,M_{d})\in\mathbb{N}_{0}^{d}=(\mathbb{N}\cup\{0\})^{d}$.

\begin{lem}\label{lemma-app-heat-kernel-estimate-1}
The integral kernel $e^{t\Delta_{\Lambda}^{N}}(\cdot,\cdot)$ of $e^{t\Delta_{\Lambda}^{N}}$ satisfies
\begin{equation*}
\int_{\Lambda}\big|e^{t\Delta_{\Lambda}^{N}}(x,\bar{x})\big|^{2}d\bar{x}\leq\prod_{i=1}^{d}\bigg(\frac{2}{b_{i}-a_{i}}+\frac{1}{\sqrt{2\pi t}}\bigg),\quad x\in\Lambda
\end{equation*}
for all $t>0$.
\end{lem}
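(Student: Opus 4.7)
The plan is to exploit the product structure of the cuboid $\Lambda = \prod_{i=1}^{d}(a_{i},b_{i})$, under which the Neumann Laplacian decomposes as a tensor sum of one-dimensional Neumann Laplacians on the intervals $(a_{i},b_{i})$. Consequently the heat semigroup factors and the integral kernel is the product of one-dimensional heat kernels, so
\[
\int_{\Lambda}\big|e^{t\Delta_{\Lambda}^{N}}(x,\bar{x})\big|^{2}d\bar{x} \;=\; \prod_{i=1}^{d}\int_{a_{i}}^{b_{i}}\big|e^{t\Delta_{(a_{i},b_{i})}^{N}}(x_{i},\bar{x}_{i})\big|^{2}d\bar{x}_{i}.
\]
It therefore suffices to prove the one-dimensional analogue of the estimate and then multiply coordinatewise.

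For a fixed factor $(a,b)$, I would use the spectral decomposition recorded immediately before the lemma: the $L^{2}$-normalized eigenfunctions are $\phi_{0}\equiv(b-a)^{-1/2}$ and $\phi_{M}(x) = \sqrt{2/(b-a)}\,\cos(M\pi(x-a)/(b-a))$ for $M\geq 1$, with eigenvalues $E_{M}=\pi^{2}M^{2}/(b-a)^{2}$. Writing the heat kernel in its spectral expansion $e^{t\Delta^{N}}(x,\bar{x}) = \sum_{M\geq 0} e^{-tE_{M}}\phi_{M}(x)\phi_{M}(\bar{x})$ and applying Parseval's identity in the $\bar{x}$ variable gives
\[
\int_{a}^{b}\big|e^{t\Delta^{N}}(x,\bar{x})\big|^{2}d\bar{x} \;=\; \sum_{M=0}^{\infty} e^{-2tE_{M}}|\phi_{M}(x)|^{2}.
\]
The pointwise bounds $|\phi_{0}(x)|^{2}=(b-a)^{-1}$ and $|\phi_{M}(x)|^{2}\leq 2(b-a)^{-1}$ for $M\geq 1$ then separate the zero mode from the positive modes.

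The only remaining ingredient is an elementary Gaussian-sum estimate: since $s\mapsto e^{-2t\pi^{2}s^{2}/(b-a)^{2}}$ is decreasing on $[0,\infty)$, the integral-comparison test yields
\[
\sum_{M=1}^{\infty} e^{-2t\pi^{2}M^{2}/(b-a)^{2}} \;\leq\; \int_{0}^{\infty} e^{-2t\pi^{2}s^{2}/(b-a)^{2}}\,ds \;=\; \frac{b-a}{2\sqrt{2\pi t}}.
\]
Combining these pieces produces the one-dimensional bound $(b-a)^{-1}+(2\pi t)^{-1/2}$, which is trivially $\leq 2(b-a)^{-1}+(2\pi t)^{-1/2}$, and taking the product over $i=1,\dots,d$ gives the claim. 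No step looks delicate; the only mild care is handling the $M=0$ mode separately (since its normalization differs from the $M\geq 1$ modes by a factor of $\sqrt{2}$) and noting that the integral comparison is uniform in $t>0$ and in $x\in\Lambda$, so the resulting estimate holds with the constants displayed in the lemma.
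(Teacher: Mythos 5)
Your proof is correct and follows essentially the same route as the paper's: expand the heat kernel in the Neumann eigenbasis, use orthonormality to reduce the integral to $\sum_{M}e^{-2E_{M}t}|\phi_{M}(x)|^{2}$, bound the eigenfunctions in sup norm, and compare the resulting Gaussian sums with Gaussian integrals. The only differences are organizational---you factor into one-dimensional kernels first and treat the zero mode with its exact normalization, which even yields the slightly sharper factor $\frac{1}{b_{i}-a_{i}}+\frac{1}{\sqrt{2\pi t}}$ in place of $\frac{2}{b_{i}-a_{i}}+\frac{1}{\sqrt{2\pi t}}$.
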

\begin{proof}
Since $\big\{\phi_{M};M\in\mathbb{N}_{0}^{d}\big\}$ forms a orthonormal basis of $L^{2}(\Lambda)$, any $f\in L^{2}(\Lambda)$ has the expansion $f=\sum_{M\in\mathbb{N}_{0}^{d}}\big\langle\phi_{M},f\big\rangle_{L^{2}(\Lambda)}\phi_{M}$. It then follows that for any $x\in\Lambda$
\begin{equation*}
\begin{split}
\big(e^{t\Delta_{\Lambda}^{N}}f\big)(x)=\sum_{M\in\mathbb{N}_{0}^{d}}\big\langle\phi_{M},f\big\rangle_{L^{2}(\Lambda)}e^{-E_{M}t}\phi_{M}(x)=\int_{\Lambda}\bigg(\sum_{M\in\mathbb{N}_{0}^{d}}e^{-E_{M}t}\phi_{M}(x)\phi_{M}(\bar{x})\bigg)f(\bar{x})d\bar{x},
\end{split}
\end{equation*}
which implies that $e^{t\Delta_{\Lambda}^{N}}(x,\bar{x})=\sum_{M\in\mathbb{N}_{0}^{d}}e^{-E_{M}t}\phi_{M}(x)\phi_{M}(\bar{x})$, $x,\bar{x}\in\Lambda$. We then compute for any $x\in\Lambda$
\begin{equation}\label{equality-app-1}
\begin{split}
\int_{\Lambda}\big|e^{t\Delta_{\Lambda}^{N}}(x,\bar{x})\big|^{2}d\bar{x}&=\int_{\Lambda}\bigg(\sum_{M,N\in\mathbb{N}_{0}^{d}}e^{-(E_{M}+E_{N})t}\phi_{M}(x)\phi_{M}(\bar{x})\phi_{N}(x)\phi_{N}(\bar{x})\bigg)d\bar{x}\\
&=\sum_{M,N\in\mathbb{N}_{0}^{d}}e^{-(E_{M}+E_{N})t}\phi_{M}(x)\phi_{N}(x)\int_{\Lambda}\phi_{M}(\bar{x})\phi_{N}(\bar{x})d\bar{x}\\
&=\sum_{M\in\mathbb{N}_{0}^{d}}e^{-2E_{M}t}|\phi_{M}(x)|^{2},
\end{split}
\end{equation}
where we have used the fact that the integral $\int_{\Lambda}\phi_{M}(\bar{x})\phi_{N}(\bar{x})d\bar{x}=\langle\phi_{M},\phi_{N}\rangle_{L^{2}(\Lambda)}$
equals $1$ if $M=N$ and equals $0$ if $M\neq N$.

We next estimate the last term in \eqref{equality-app-1}. Since $\phi_{M}(x)\leq2^{d/2}\Big(\prod_{i=1}^{d}(b_{i}-a_{i})\Big)^{-1/2}$ for all $x\in\Lambda$ and all $M\in\mathbb{N}_{0}^{d}$, we obtain for any $x\in\Lambda$
\begin{equation}\label{estimate-7}
\begin{split}
\sum_{M\in\mathbb{N}_{0}^{d}}e^{-2E_{M}t}|\phi_{M}(x)|^{2}&\leq2^{d}\bigg(\prod_{i=1}^{d}(b_{i}-a_{i})\bigg)^{-1}\sum_{M\in\mathbb{N}_{0}^{d}}e^{-2E_{M}t}\\
&=2^{d}\bigg(\prod_{i=1}^{d}(b_{i}-a_{i})\bigg)^{-1}\prod_{i=1}^{d}\bigg(\sum_{M_{i}\in\mathbb{N}_{0}}e^{-\frac{2\pi^{2}M_{i}^{2}}{(b_{i}-a_{i})^{2}}t}\bigg).
\end{split}
\end{equation}
The sums in the last step of above estimates can be estimated by using Gaussian integrals. More precisely, for any $i\in\{1,\dots,d\}$, we have
\begin{equation*}
\begin{split}
\sum_{M_{i}\in\mathbb{N}_{0}}e^{-\frac{2\pi^{2}M_{i}^{2}}{(b_{i}-a_{i})^{2}}t}\leq1+\int_{0}^{\infty}e^{-\frac{2\pi^{2}u^{2}}{(b_{i}-a_{i})^{2}}t}du=1+\frac{b_{i}-a_{i}}{\sqrt{8\pi
t}},
\end{split}
\end{equation*}
which together with \eqref{equality-app-1} and \eqref{estimate-7} leads to the result of the lemma.
\end{proof}

\section{Laplacian on Cuboids with Mixed Boundary Conditoins}

Let $\Lambda_{d}=\prod_{i=1}^{d}(a_{i},b_{i})$ be an open cuboid in $\mathbb{R}^{d}$ and $\partial\Lambda_{d}$ be its boundary. Let $\Lambda_{n}=\prod_{j=1}^{n}(c_{j},d_{j})$ be an open cuboid in $\mathbb{R}^{n}$ and $\partial\Lambda_{n}$ be its boundary. Set $S=\Lambda_{d}\times\Lambda_{n}$. Denote by $-\Delta_{S}^{X,D}$ the negative Laplacian $-\Delta$ restricted to $L^{2}(S)$ with $X$
boundary conditions on $\partial\Lambda_{d}\times\Lambda_{n}$ and Dirichlet boundary conditions on $\Lambda_{d}\times\partial\Lambda_{n}$, where $X=D$ and $X=N$ refer to Dirichlet and Neumann boundary conditions, respectively.

The eigenvalues of $-\Delta_{S}^{N,D}$ are given by
\begin{equation}\label{eigenvalue-N-D}
E_{M,N}^{N,D}=\pi^{2}\bigg(\sum_{i=1}^{d}\frac{M_{i}^{2}}{(b_{i}-a_{i})^{2}}+\sum_{j=1}^{n}\frac{N_{j}^{2}}{(d_{j}-c_{j})^{2}}\bigg),\quad M\in\mathbb{N}_{0}^{d},\,\,N\in\mathbb{N}^{n},
\end{equation}
where we use the same notation for the Neumann boundary conditions and the multiple index, and it should not cause any confusion. Similarly, the eigenvalues of $-\Delta_{S}^{D,D}$ are given by
\begin{equation}\label{eigenvalue-D-D}
E_{M,N}^{D,D}=\pi^{2}\bigg(\sum_{i=1}^{d}\frac{M_{i}^{2}}{(b_{i}-a_{i})^{2}}+\sum_{j=1}^{n}\frac{N_{j}^{2}}{(d_{j}-c_{j})^{2}}\bigg), \quad M\in\mathbb{N}^{d},\,\,N\in\mathbb{N}^{n}.
\end{equation}

\begin{lem}\label{lemma-app-difference-of-trace}
The difference between ${\rm Tr}\big[e^{\Delta_{S}^{N,D}}\big]$ and ${\rm Tr}\big[e^{\Delta_{S}^{D,D}}\big]$ satisfies
\begin{equation*}
0\leq{\rm Tr}\Big[e^{\Delta_{S}^{N,D}}\Big]-{\rm Tr}\Big[e^{\Delta_{S}^{D,D}}\Big]\leq\bigg[\prod_{i=1}^{d}\bigg(1+\frac{b_{i}-a_{i}}{\sqrt{4\pi}}\bigg)-\prod_{i=1}^{d}\bigg(\frac{b_{i}-a_{i}}{\sqrt{4\pi}}-1\bigg)\bigg]\bigg[\prod_{j=1}^{n}\frac{d_{j}-c_{j}}{\sqrt{4\pi}}\bigg].
\end{equation*}
\end{lem}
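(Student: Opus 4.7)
The plan is to exploit the explicit eigenvalue formulas \eqref{eigenvalue-N-D} and \eqref{eigenvalue-D-D} in order to reduce both traces to products of one-dimensional exponential sums, and then to bound these sums using the Gaussian integral comparison already developed in the proof of Lemma \ref{lemma-app-heat-kernel-estimate-1}. The starting observation is that since $E_{M,N}^{X,D}$ splits additively into $d$ contributions from $\Lambda_d$-coordinates and $n$ contributions from $\Lambda_n$-coordinates, the exponential factorizes and so does the trace. For $X\in\{D,N\}$ one obtains
\begin{equation*}
{\rm Tr}\bigl[e^{\Delta_S^{X,D}}\bigr]=\Biggl(\prod_{i=1}^{d}\sum_{M_i\in I_i^X}e^{-\pi^2 M_i^2/(b_i-a_i)^2}\Biggr)\Biggl(\prod_{j=1}^{n}\sum_{N_j\in\mathbb{N}}e^{-\pi^2 N_j^2/(d_j-c_j)^2}\Biggr),
\end{equation*}
where $I_i^N=\mathbb{N}_0$ and $I_i^D=\mathbb{N}$. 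The $N_j$-product is common to both traces, so the difference takes the form $\bigl(\prod_i A_i-\prod_i B_i\bigr)\prod_j C_j$ with $A_i=\sum_{M_i\ge 0}$, $B_i=\sum_{M_i\ge 1}$, $C_j=\sum_{N_j\ge 1}$.

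The lower bound $0\le{\rm Tr}[e^{\Delta_S^{N,D}}]-{\rm Tr}[e^{\Delta_S^{D,D}}]$ is then immediate from $A_i\ge B_i>0$ together with $C_j>0$. For the upper bound I would apply the integral test for decreasing positive functions. On the one hand,
\begin{equation*}
A_i\le 1+\int_0^\infty e^{-\pi^2 u^2/(b_i-a_i)^2}\,du=1+\frac{b_i-a_i}{\sqrt{4\pi}},
\end{equation*}
where the Gaussian integral is evaluated exactly as in Lemma \ref{lemma-app-heat-kernel-estimate-1}. On the other hand,
\begin{equation*}
B_i\ge\int_1^\infty e^{-\pi^2 u^2/(b_i-a_i)^2}\,du\ge\frac{b_i-a_i}{\sqrt{4\pi}}-1,
\end{equation*}
using $\int_0^1 e^{-\pi^2 u^2/(b_i-a_i)^2}\,du\le 1$. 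Analogously, $C_j\le\int_0^\infty e^{-\pi^2 u^2/(d_j-c_j)^2}\,du=\tfrac{d_j-c_j}{\sqrt{4\pi}}$.

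Combining the upper estimates on $A_i$ and $C_j$ with the lower estimate on $B_i$ yields
\begin{equation*}
\prod_i A_i-\prod_i B_i\le\prod_{i=1}^d\!\left(1+\frac{b_i-a_i}{\sqrt{4\pi}}\right)-\prod_{i=1}^d\!\left(\frac{b_i-a_i}{\sqrt{4\pi}}-1\right),
\end{equation*}
which multiplied by $\prod_j C_j\le\prod_j\tfrac{d_j-c_j}{\sqrt{4\pi}}$ gives exactly the bound stated in the lemma. The only mildly subtle point is ensuring consistency of signs in the lower bound for $B_i$: when $b_i-a_i<\sqrt{4\pi}$ the right-hand side $\frac{b_i-a_i}{\sqrt{4\pi}}-1$ becomes negative, but this only weakens the lower bound on $B_i$ trivially, and the resulting algebraic inequality $\prod A_i-\prod B_i\le\prod(1+\cdots)-\prod(\cdots-1)$ continues to hold because the right-hand side is monotone increasing in each argument once one is on the correct side. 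I do not anticipate any real obstacle — the argument is essentially a separation-of-variables bookkeeping combined with the standard integral comparison already used earlier in the appendix.
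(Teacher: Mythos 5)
Your argument is essentially the paper's own proof: both factorize the two traces via the explicit eigenvalues \eqref{eigenvalue-N-D} and \eqref{eigenvalue-D-D} into products of one-dimensional exponential sums, bound each sum from above or below by the Gaussian integral comparison, and multiply the resulting estimates together. The sign issue you flag at the end (to combine the bounds one needs $\prod_i B_i\geq\prod_i\big(\tfrac{b_i-a_i}{\sqrt{4\pi}}-1\big)$, which is only automatic when every $b_i-a_i\geq\sqrt{4\pi}$) is passed over silently in the paper's proof as well, so your write-up is no less complete than the published one.
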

\begin{proof}
Using \eqref{eigenvalue-N-D}, ${\rm Tr}\big[e^{\Delta_{S}^{N,D}}\big]$ can be written as \begin{equation}\label{app-equality-1}
\begin{split}
{\rm Tr}\Big[e^{\Delta_{S}^{N,D}}\Big]&=\sum_{M\in\mathbb{N}_{0}^{d}}\sum_{N\in\mathbb{N}^{n}}e^{-E_{M,N}^{N,D}}\\
&=\sum_{M\in\mathbb{N}_{0}^{d}}{\rm exp}\bigg\{-\pi^{2}\sum_{i=1}^{d}\frac{M_{i}^{2}}{(b_{i}-a_{i})^{2}}\bigg\}\times\sum_{N\in\mathbb{N}^{n}}{\rm
exp}\bigg\{-\pi^{2}\sum_{j=1}^{n}\frac{N_{j}^{2}}{(d_{j}-c_{j})^{2}}\bigg\}.
\end{split}
\end{equation}
Similarly, \eqref{eigenvalue-D-D} gives
\begin{equation}\label{app-equality-2}
{\rm Tr}\Big[e^{\Delta_{S}^{D,D}}\Big]=\sum_{M\in\mathbb{N}^{d}}{\rm exp}\bigg\{-\pi^{2}\sum_{i=1}^{d}\frac{M_{i}^{2}}{(b_{i}-a_{i})^{2}}\bigg\}\times\sum_{N\in\mathbb{N}^{n}}{\rm exp}\bigg\{-\pi^{2}\sum_{j=1}^{n}\frac{N_{j}^{2}}{(d_{j}-c_{j})^{2}}\bigg\}.
\end{equation}

For the first summation in the last line of \eqref{app-equality-1}, we have
\begin{equation}\label{app-inequality-1}
\begin{split}
&\sum_{M\in\mathbb{N}_{0}^{d}}{\rm exp}\bigg\{-\pi^{2}\sum_{i=1}^{d}\frac{M_{i}^{2}}{(b_{i}-a_{i})^{2}}\bigg\}\\
&\quad\quad=\prod_{i=1}^{d}\bigg(\sum_{M_{i}\in\mathbb{N}_{0}}e^{-\frac{\pi^{2}M_{i}^{2}}{(b_{i}-a_{i})^{2}}}\bigg)\leq\prod_{i=1}^{d}\bigg(1+\int_{0}^{\infty}e^{-\frac{\pi^{2}u^{2}}{(b_{i}-a_{i})^{2}}}du\bigg)=\prod_{i=1}^{d}\bigg(1+\frac{b_{i}-a_{i}}{\sqrt{4\pi}}\bigg).
\end{split}
\end{equation}
For the first summation on the right hand side of \eqref{app-equality-2}, we estimate
\begin{equation}\label{app-inequality-2}
\begin{split}
&\sum_{M\in\mathbb{N}^{d}}{\rm exp}\bigg\{-\pi^{2}\sum_{i=1}^{d}\frac{M_{i}^{2}}{(b_{i}-a_{i})^{2}}\bigg\}\\
&\quad\quad=\prod_{i=1}^{d}\bigg(\sum_{M_{i}\in\mathbb{N}}e^{-\frac{\pi^{2}M_{i}^{2}}{(b_{i}-a_{i})^{2}}}\bigg)\geq\prod_{i=1}^{d}\bigg(\int_{0}^{\infty}e^{-\frac{\pi^{2}u^{2}}{(b_{i}-a_{i})^{2}}}du-1\bigg)=\prod_{i=1}^{d}\bigg(\frac{b_{i}-a_{i}}{\sqrt{4\pi}}-1\bigg).
\end{split}
\end{equation}
For the second summation in the last line of \eqref{app-equality-1} (or the second summation on the right hand side of \eqref{app-equality-2}), we have
\begin{equation}\label{app-inequality-3}
\begin{split}
\sum_{N\in\mathbb{N}^{n}}{\rm exp}\bigg\{-\pi^{2}\sum_{j=1}^{n}\frac{N_{j}^{2}}{(d_{j}-c_{j})^{2}}\bigg\}=\prod_{j=1}^{n}\bigg(\sum_{N_{j}\in\mathbb{N}}e^{-\frac{\pi^{2}N_{j}^{2}}{(d_{j}-c_{j})^{2}}}\bigg)\leq\prod_{j=1}^{n}\int_{0}^{\infty}e^{-\frac{\pi^{2}u^{2}}{(d_{j}-c_{j})^{2}}}du=\prod_{j=1}^{n}\frac{d_{j}-c_{j}}{\sqrt{4\pi}}.
\end{split}
\end{equation}

The result of the lemma then follows from \eqref{app-equality-1}, \eqref{app-equality-2}, \eqref{app-inequality-1}, \eqref{app-inequality-2} and \eqref{app-inequality-3}.
\end{proof}

\subsection*{Acknowledgements} The author would like to thank Professor Wenxian Shen for valuable suggestions during the work of this paper.

\end{document}